\theoremstyle{plain}
\newtheorem{theorem}{Theorem}[section]
\newtheorem{lemma}[theorem]{Lemma}
\newtheorem{proposition}[theorem]{Proposition}
\theoremstyle{definition}
\newtheorem{definition}[theorem]{Definition}
\newtheorem{corollary}[theorem]{Corollary}
\newtheorem{example}[theorem]{Example}
\theoremstyle{remark}
\newtheorem{remark}{\sc Remark}
\def\namedlabel#1#2{\begingroup
   \def\@currentlabel{#2}%
   \label{#1}\endgroup}
\date{}
\title{\bf Gelfand residuated lattices}\vspace{.25 in}
\author{ \vspace{.25 in} {\bf Saeed Rasouli$^{1*}$} and {\bf Amin Dehghani $^2$}\\
Department of Mathematics, Persian Gulf University, \\Bushehr, Iran \\
{\tt $^1$srasouli@pgu.ac.ir }\\
{\tt $^2$dehghany.amin@hotmail.com}\\}
\begin{document}
 \maketitle
 \begin{abstract}
In this paper, a combination of algebraic and topological methods is applied to obtain new and structural results on Gelfand residuated lattices. It is demonstrated that Gelfand's residuated lattices strongly tied up with the hull-kernel topology. Especially, it is shown that a residuated lattice is Gelfand if and only if its prime spectrum, equipped with the hull-kernel topology, is normal. The class of soft residuated lattices is introduced, and it is shown that a residuated lattice is soft if and only if it is Gelfand and semisimple. Gelfand residuated lattices are characterized using the pure part of filters. The relation between pure filters and radicals in a Gelfand residuated lattice is described. It is shown that a residuated lattice is Gelfand if and only if its pure spectrum is homeomorphic to its usual maximal spectrum. The pure filters of a Gelfand residuated lattice are characterized. Finally, it is proved that a residuated lattice is Gelfand if and only if the hull-kernel and the $\mathscr{D}$-topology coincide on the set of maximal filters.\footnote{2020 Mathematics Subject Classification: 06F99,06D20 \\
{\it Key words and phrases}: Gelfand residuated lattice; pure filters; soft residuated lattice, hull-kernel topology, $D$-topology.}
\end{abstract}
\section{Introduction}

\cite{gelfand1941normierte} showed that the functor from the category of compact Hausdorff spaces to the category of commutative $C^*$-algebras, obtained by assigning to each compact Hausdorff space $X$ the commutative $C^*$-algebra $C(X)$ of continuous complex functions on $X$, determines a duality between these categories. The dual functor is that obtained by assigning to each commutative $C^*$-algebra $\mathfrak{A}$ the compact Hausdorff space $Max(\mathfrak{A})$ of maximal ideals of $\mathfrak{A}$ endowed with the hull-kernel topology. To extend \textit{the Gelfand duality} to (not necessarily) commutative rings, \cite{mulvey1979generalisation} introduced the notion of a \textit{Gelfand ring}; a unitary ring $\mathfrak{A}$ that for any two distinct maximal right ideals $M$ and $M'$, there exist elements $a\notin M$ and
$a'\notin M'$ such that $aAa'=0$. He also asserted that the class of commutative Gelfand rings can be characterized by a property that can be formulated in
terms of universal algebra, namely that each prime ideal is contained in a unique maximal ideal. The class of rings that satisfy this universal property had been investigated by \cite{de1971commutative} under \textit{pm-rings}. Gelfand rings have been the main subject of many articles in the literature over the years and are still of current interest, see e.g. \cite{johnstone1982stone,contessa1982pm,contessa1984cerrain,al1989pure,al1990exchange,banaschewski2000gelfand,aghajani2020characterizations}. Motivated by \cite{de1971commutative} the notion of a \textit{pm-lattice} was introduced by \cite{pawar1977pm} as a bounded distributive lattice in which any prime ideal is contained in a unique maximal ideal. \citet[p. 185]{simmons1980reticulated} showed that a bounded distributive lattice is a pm-lattice if and only if it is a normal lattice. \citet[p. 199]{johnstone1982stone} proved that a commutative ring is a pm-ring if and only if its reticulation is a normal lattice. \citet[p. 208]{paseka1986regular} asserted that a unitary ring is Gelfand if and only if the set of its left ideals forms a normal quantale. \citet[p. 98]{georoescu1989some} characterized the class of normal multiplicative
ideal structures, as a generalization of some ordered algebras like the lattice of ideals in a Gelfand ring, the lattice of ideals
in a normal lattice, the lattice of ideals in an $F$-ring, the normal frames, and the normal quantales. MV-algebras \citep{filipoiu1995compact,cignoli2013algebraic}, BL-algebras \citep{kowalski2000residuated,di2003compact}, lattice-ordered groups \citep{bigard2006groupes}, normal residuated lattices \citep{bucsneag2012stable}, and normal quantales  \citep{cheptea2019boolean} are some classes of algebras that also satisfy the similar property. \citet[Definition 6.5]{georgescu2015algebraic} introduced the notion of \textit{Gelfand residuated lattices} and characterized them by the notion of reticulation. Recently, \citet[Proposition 8.8]{georgescu2020} proved that an algebra which satisfies some certain conditions is congruence normal if and only if any its prime congruence is contained in a unique maximal congruence.

Given the above discussions, we decided to take a deeper look at Gelfand residuated lattices. So notions of Gelfand residuated lattices are investigated, and some algebraic and topological characterizations are given for them. Our findings show that some results obtained by some above papers can also be reproduced via residuated lattices.

This paper is organized into four sections as follows: In Sect. \ref{sec2}, some definitions and facts about residuated lattices are recalled, and some of their propositions extracted. We illustrate this section with some examples of residuated lattices, which will be used in the following sections. Sect. \ref{sec3} deals with Gelfand residuated lattices. Theorem \ref{nococo} shows that a residuated lattice is Gelfand if and only if the bounded distributive lattice of its filters is normal. Theorem \ref{contchar} (Contessa's characterization) gives an element-wise characterization for Gelfand residuated lattices. The class of Gelfand residuated lattices is characterized by means of maximal filters in Theorem \ref{pmprop}. The remaining theorems of this section demonstrate that Gelfand residuated lattices are strongly tied up with the hull-kernel topology. Theorem \ref{gelsnor} shows that a residuated lattice is Gelfand if and only if its prime spectrum is normal. At the end of this section, following by \cite{bkouche1970couples}, the class of soft residuated lattices is introduced, and it is shown that a residuated lattice is soft if and only if it is Gelfand and semisimple. Sect. \ref{sec4} deals with the pure spectrum of a Gelfand residuated lattice. Theorem \ref{rfilter} characterizes Gelfand residuated lattices by means of the pure part of filters. In Theorem \ref{rhoradgel}, the relation between pure filters and radicals in a Gelfand residuated lattice is described. Theorem \ref{sppgelfch} verifies that a residuated lattice is Gelfand if and only if its pure spectrum is homeomorphic to its usual maximal spectrum. The pure filters of a Gelfand residuated lattice are characterized in Theorem \ref{mppurefcl}. Finally, Theorem \ref{rickhulldmin} proves that a residuated lattice is Gelfand if and only if its hull-kernel and its $\mathscr{D}$-topology coincide on the set of maximal filters.
\section{Residuated lattices}\label{sec2}

In this section, some definitions, properties, and results relative to residuated lattices, which will be used
in the following, recalled.

An algebra $\mathfrak{A}=(A;\vee,\wedge,\odot,\rightarrow,0,1)$ is called a \textit{residuated lattice} provided that $\ell(\mathfrak{A})=(A;\vee,\wedge,0,1)$ is a bounded lattice, $(A;\odot,1)$ is a commutative monoid, and $(\odot,\rightarrow)$ is an adjoint pair. A residuated lattice $\mathfrak{A}$ is called \textit{non-degenerate} if $0\neq 1$. On each residuated lattice $\mathfrak{A}$ we can define a unary operation $\neg$ by $\neg a:=a\rightarrow 0$. We also define
for all $a\in A$, $a^{1}=a$ and $a^{n+1}=a^{n}\odot a$, for any integer $n$. An element $a$ in $A$ is called \textit{idempotent} if $a^{2}=a$, and it is called \textit{nilpotent} if there exists an integer $n$ such that $a^{n}$=0. The minimum integer $n$ such that $a^{n}=0$ is called \textit{nilpotence order of $a$}. The set of nilpotent elements of $\mathfrak{A}$ is denoted by $\mathfrak{ni(A)}$. It is easy too see that $\mathfrak{ni(A)}$ is an ideal of $\ell(\mathfrak{A})$. In the following, we set $\mathfrak{in(A)}=A\setminus \mathfrak{ni(A)}$. The class of residuated lattices is equational, and so forms a variety. For a survey of residuated lattices, the reader is referred to \cite{galatos2007residuated}.
\begin{remark}\label{resproposition}\citep[Proposition 2.6]{ciungu2006classes}
Let $\mathfrak{A}$ be a residuated lattice. The following conditions are satisfied for any $x,y,z\in A$:
\begin{enumerate}
  \item [$r_{1}$ \namedlabel{res1}{$r_{1}$}] $x\odot (y\vee z)=(x\odot y)\vee (x\odot z)$;
  \item [$r_{2}$ \namedlabel{res2}{$r_{2}$}] $x\vee (y\odot z)\geq (x\vee y)\odot (x\vee z)$.
  \end{enumerate}
\end{remark}
\begin{example}\label{exa6}
Let $A_6=\{0,a,b,c,d,1\}$ be a lattice whose Hasse diagram is given by Figure \ref{figa6}. Routine calculation shows that $\mathfrak{A}_6=(A_6;\vee,\wedge,\odot,\rightarrow,0,1)$ is a residuated lattice in which the commutative operation $``\odot"$ is given by Table \ref{taba6} and the operation $``\rightarrow"$ is given by $x\rightarrow y=\bigvee \{a\in A_6|x\odot a\leq y\}$, for any $x,y\in A_6$.
\FloatBarrier
\begin{table}[ht]
\begin{minipage}[b]{0.56\linewidth}
\centering
\begin{tabular}{ccccccc}
\hline
$\odot$ & 0 & a & b & c & d & 1 \\ \hline
0       & 0 & 0 & 0 & 0 & 0 & 0 \\
        & a & a & a & 0 & a & a \\
        &   & b & a & 0 & a & b \\
        &   &   & c & c & c & c \\
        &   &   &   & d & d & d \\
        &   &   &   &   & 1 & 1 \\ \hline
\end{tabular}
\caption{Cayley table for ``$\odot$" of $\mathfrak{A}_6$}
\label{taba6}
\end{minipage}\hfill
\begin{minipage}[b]{0.6\linewidth}
\centering
  \begin{tikzpicture}[>=stealth',semithick,auto]
    \tikzstyle{subj} = [circle, minimum width=6pt, fill, inner sep=0pt]
    \tikzstyle{obj}  = [circle, minimum width=6pt, draw, inner sep=0pt]

    \tikzstyle{every label}=[font=\bfseries]

    \node[subj,  label=below:0] (0) at (0,0) {};
    \node[subj,  label=below:c] (c) at (-1,1) {};
    \node[subj,  label=below:a] (a) at (1,.5) {};
    \node[subj,  label=below right:b] (b) at (1,1.5) {};
    \node[subj,  label=below:d] (d) at (0,2) {};
    \node[subj,  label=below right:1] (1) at (0,3) {};

    \path[-]   (0)    edge                node{}      (a);
    \path[-]   (a)    edge                node{}      (b);
    \path[-]   (0)    edge                node{}      (c);
    \path[-]   (c)    edge                node{}      (d);
    \path[-]   (b)    edge                node{}      (d);
    \path[-]   (d)    edge                node{}      (1);
\end{tikzpicture}
\captionof{figure}{Hasse diagram of $\mathfrak{A}_{6}$}
\label{figa6}
\end{minipage}
\end{table}
\FloatBarrier
\end{example}
\begin{example}\label{exa8}
Let $A_8=\{0,a,b,c,d,e,f,1\}$ be a lattice whose Hasse diagram is given by Figure \ref{figa8}. Routine calculation shows that $\mathfrak{A}_8=(A_8;\vee,\wedge,\odot,\rightarrow,0,1)$ is a residuated lattice in which the commutative operation $``\odot"$ is given by Table \ref{taba8} and the operation $``\rightarrow"$ is given by $x\rightarrow y=\bigvee \{a\in A_8|x\odot a\leq y\}$, for any $x,y\in A_8$.
\FloatBarrier
\begin{table}[ht]
\begin{minipage}[b]{0.56\linewidth}
\centering
\begin{tabular}{ccccccccc}
\hline
$\odot$ & 0 & a & b & c & d & e & f & 1 \\ \hline
0       & 0 & 0 & 0 & 0 & 0 & 0 & 0 & 0 \\
        & a & a & 0 & a & a & a & a & a \\
        &   & b & 0 & 0 & 0 & 0 & b & b \\
        &   &   & c & c & a & c & a & c \\
        &   &   &   & d & a & a & d & d \\
        &   &   &   &   & e & c & d & e \\
        &   &   &   &   &   & f & f & f \\
        &   &   &   &   &   &   & 1 & 1 \\ \hline
\end{tabular}
\caption{Cayley table for ``$\odot$" of $\mathfrak{A}_8$}
\label{taba8}
\end{minipage}\hfill
\begin{minipage}[b]{0.6\linewidth}
\centering
  \begin{tikzpicture}[>=stealth',semithick,auto]
    \tikzstyle{subj} = [circle, minimum width=6pt, fill, inner sep=0pt]
    \tikzstyle{obj}  = [circle, minimum width=6pt, draw, inner sep=0pt]

    \tikzstyle{every label}=[font=\bfseries]

    \node[subj,  label=below:0] (0) at (0,0) {};
    \node[subj,  label=below:a] (a) at (-1,1) {};
    \node[subj,  label=below:b] (b) at (1,1) {};
    \node[subj,  label=below:c] (c) at (-2,2) {};
    \node[subj,  label=below:d] (d) at (0,2) {};
    \node[subj,  label=below:e] (e) at (-1,3) {};
    \node[subj,  label=below:f] (f) at (1,3) {};
    \node[subj,  label=below:1] (1) at (0,4) {};

    \path[-]   (0)    edge                node{}      (a);
    \path[-]   (0)    edge                node{}      (b);
    \path[-]   (b)    edge                node{}      (d);
    \path[-]   (d)    edge                node{}      (f);
    \path[-]   (f)    edge                node{}      (1);
    \path[-]   (a)    edge                node{}      (d);
    \path[-]   (a)    edge                node{}      (c);
    \path[-]   (c)    edge                node{}      (e);
    \path[-]   (d)    edge                node{}      (e);
    \path[-]   (e)    edge                node{}      (1);
\end{tikzpicture}

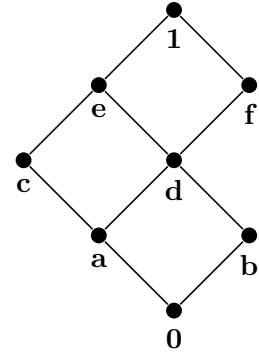
\captionof{figure}{Hasse diagram of $\mathfrak{A}_{8}$}
\label{figa8}
\end{minipage}
\end{table}
\FloatBarrier
\end{example}

Let $\mathfrak{A}$ be a residuated lattice. A non-void subset $F$ of $A$ is called a \textit{filter} of $\mathfrak{A}$ provided that $x,y\in F$ implies $x\odot y\in F$, and $x\vee y\in F$, for any $x\in F$ and $y\in A$. The set of filters of $\mathfrak{A}$ is denoted by $\mathscr{F}(\mathfrak{A})$. A filter $F$ of $\mathfrak{A}$ is called \textit{proper} if $F\neq A$. For any subset $X$ of $A$, the \textit{filter of $\mathfrak{A}$ generated by $X$} is denoted by $\mathscr{F}(X)$. For each $x\in A$, the filter generated by $\{x\}$ is denoted by $\mathscr{F}(x)$ and said to be \textit{principal}. The set of principal filters is denoted by $\mathscr{PF}(\mathfrak{A})$. Following \citet[\S 5.7]{gratzer2011lattice}, a join-complete lattice $\mathfrak{A}$, is called a \textit{frame} if it satisfies the join infinite distributive law (JID), i.e., for any $a\in A$ and $S\subseteq A$, $a\wedge \bigvee S=\bigvee \{a\wedge s\mid s\in S\}$. A frame $\mathfrak{A}$ is called complete provided that $\mathfrak{A}$ is a complete lattice. According to \cite{galatos2007residuated}, $(\mathscr{F}(\mathfrak{A});\cap,\veebar,\textbf{1},A)$ is a complete frame, in which $\veebar \mathcal{F}=\mathscr{F}(\cup \mathcal{F})$, for any $\mathcal{F}\subseteq \mathscr{F}(\mathfrak{A})$.
\begin{example}\label{filterexa}
Consider the residuated lattice $\mathfrak{A}_6$ from Example \ref{exa6} and the residuated lattice $\mathfrak{A}_8$ from Example \ref{exa8}. The sets of their filters are presented in Table \ref{tafiex}.
\begin{table}[h]
\centering
\begin{tabular}{ccl}
\hline
                 & \multicolumn{2}{c}{Filters}                                       \\ \hline
$\mathfrak{A}_6$ & \multicolumn{2}{c}{$\{1\},\{a,b,d,1\},\{c,d,1\},\{d,1\},A_6$} \\
$\mathfrak{A}_8$ & \multicolumn{2}{c}{$\{1\},\{a,c,d,e,f,1\},\{c,e,1\},\{f,1\},A_8$} \\ \hline
\end{tabular}
\caption{The sets of filters of $\mathfrak{A}_6$ and $\mathfrak{A}_8$}
\label{tafiex}
\end{table}
\end{example}

The proof of the following proposition has a routine verification, and so it is left to the reader.
\begin{proposition}\label{genfilprop}
Let $\mathfrak{A}$ be a residuated lattice and $F$ be a filter of $\mathfrak{A}$. The following assertions hold, for any $x,y\in A$:
\begin{enumerate}
  \item  [(1) \namedlabel{genfilprop1}{(1)}] $\mathscr{F}(F,x)\stackrel{def.}{=}F\veebar \mathscr{F}(x)=\{a\in A|f\odot x^n\leq a,\textrm{~for~some}~f\in F~\textrm{and}~n\in \mathbb{N}\}$;
  \item  [(2) \namedlabel{genfilprop2}{(2)}] $x\leq y$ implies $\mathscr{F}(y)\subseteq \mathscr{F}(x)$.
  \item  [(3) \namedlabel{genfilprop3}{(3)}] $\mathscr{F}(x)\cap \mathscr{F}(y)=\mathscr{F}(x\vee y)$;
  \item  [(4) \namedlabel{genfilprop4}{(4)}] $\mathscr{F}(x)\veebar \mathscr{F}(y)=\mathscr{F}(x\odot y)$;
  \item  [(5) \namedlabel{genfilprop5}{(5)}] $\mathscr{PF}(\mathfrak{A})$ is a sublattice of $\mathscr{F}(\mathfrak{A})$.
\end{enumerate}
\end{proposition}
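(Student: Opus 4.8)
The plan is to establish \ref{genfilprop1} first, since the explicit description of a generated filter drives everything else. By definition $F \veebar \mathscr{F}(x)$ is the smallest filter containing $F \cup \{x\}$, so it suffices to show that the set $S := \{a \in A \mid f \odot x^n \leq a \text{ for some } f \in F,\ n \in \mathbb{N}\}$ is itself a filter containing $F \cup \{x\}$ and is contained in every filter that contains $F \cup \{x\}$. For the first claim I would check that $S$ is upward closed (immediate from transitivity of $\leq$); that $S$ is closed under $\odot$, using $(f \odot x^n) \odot (g \odot x^m) = (f \odot g) \odot x^{n+m}$ together with the monotonicity of $\odot$ and the closure of $F$ under $\odot$; that $F \subseteq S$ because $f \odot x \leq f$ for every $f \in F$ (as $b \odot c \leq b$); and that $x \in S$ because $1 \in F$ and $1 \odot x = x$. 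For minimality, if $G$ is any filter with $F \cup \{x\} \subseteq G$ and $a \in S$, then $f \odot x^n \in G$ by $\odot$-closure and $a \geq f \odot x^n$ forces $a \in G$ by upward closure. This gives $S = F \veebar \mathscr{F}(x)$.

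Specialising \ref{genfilprop1} to $F = \textbf{1}$, the least filter $\{1\}$, yields the clean formula $\mathscr{F}(x) = \{a \in A \mid x^n \leq a \text{ for some } n \in \mathbb{N}\}$, which I would use throughout the rest. Part \ref{genfilprop2} is then immediate: if $x \leq y$ and $y^n \leq a$, then monotonicity of $\odot$ gives $x^n \leq y^n \leq a$, so $a \in \mathscr{F}(x)$.

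For \ref{genfilprop3}, the inclusion $\mathscr{F}(x \vee y) \subseteq \mathscr{F}(x) \cap \mathscr{F}(y)$ follows from \ref{genfilprop2} applied to $x \leq x \vee y$ and $y \leq x \vee y$. The reverse inclusion is the only genuinely computational point, and I expect it to be the main obstacle. Given $a \in \mathscr{F}(x) \cap \mathscr{F}(y)$, choose $n, m$ with $x^n \leq a$ and $y^m \leq a$; I would expand $(x \vee y)^{n+m}$ into a join of products of $n+m$ factors, each factor being $x$ or $y$, by applying the distributivity law \ref{res1} repeatedly. A pigeonhole argument shows that in each such product either at least $n$ factors equal $x$ or at least $m$ factors equal $y$, so every product is below $x^n$ or below $y^m$, hence below $a$; taking the join gives $(x \vee y)^{n+m} \leq a$ and therefore $a \in \mathscr{F}(x \vee y)$.

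Part \ref{genfilprop4} splits the same way: since $x \odot y \leq x$ and $x \odot y \leq y$, \ref{genfilprop2} gives $\mathscr{F}(x), \mathscr{F}(y) \subseteq \mathscr{F}(x \odot y)$, whence $\mathscr{F}(x) \veebar \mathscr{F}(y) \subseteq \mathscr{F}(x \odot y)$; conversely $x \odot y$ lies in the filter $\mathscr{F}(x) \veebar \mathscr{F}(y)$, which contains $x$ and $y$ and is $\odot$-closed, so $\mathscr{F}(x \odot y) \subseteq \mathscr{F}(x) \veebar \mathscr{F}(y)$. Finally, \ref{genfilprop5} is immediate from \ref{genfilprop3} and \ref{genfilprop4}: the meet and join in $\mathscr{F}(\mathfrak{A})$ of two principal filters are $\mathscr{F}(x) \cap \mathscr{F}(y) = \mathscr{F}(x \vee y)$ and $\mathscr{F}(x) \veebar \mathscr{F}(y) = \mathscr{F}(x \odot y)$, both principal, so $\mathscr{PF}(\mathfrak{A})$ is closed under both lattice operations and hence a sublattice.
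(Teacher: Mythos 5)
Your proof is correct. The paper gives no proof of this proposition at all---it is explicitly declared a routine verification and left to the reader---and your argument is exactly the intended one: establish the explicit description in part (1) (where the key facts, $b\odot c\leq b$ and closure of the displayed set under $\odot$, both follow from integrality, i.e.\ the monoid unit $1$ being the top element), specialize to $F=\{1\}$, and then derive (2)--(5), with the pigeonhole expansion of $(x\vee y)^{n+m}$ via the distributivity law \ref{res1} correctly handling the only substantive inclusion, namely $\mathscr{F}(x)\cap\mathscr{F}(y)\subseteq\mathscr{F}(x\vee y)$ in part (3).
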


The following proposition gives a characterization for the comaximal filters of a residuated lattice.
\begin{proposition}\label{compropo}
  Let $\mathfrak{A}$ be a residuated lattice and $F,G$ two proper filters of $\mathfrak{A}$. The following assertions are equivalent:
  \begin{enumerate}
  \item  [$(1)$ \namedlabel{compropo1}{$(1)$}] $F$ and $G$ are comaximal, i.e., $F\veebar G=A$;
  \item  [$(2)$ \namedlabel{compropo2}{$(2)$}] there exist $f\in F$ and $g\in G$ such that $f\odot g=0$;
  \item  [$(3)$ \namedlabel{compropo3}{$(3)$}] there exists $a\in A$ such that $a\in F$ and $\neg a\in G$.
\end{enumerate}
\end{proposition}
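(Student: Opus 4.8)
The plan is to run the cycle of implications $(1)\Rightarrow(2)\Rightarrow(3)\Rightarrow(1)$, leaning on the explicit description of the join in the frame $\mathscr{F}(\mathfrak{A})$ together with the adjunction $(\odot,\rightarrow)$. The one genuinely structural ingredient is a concrete form for $F\veebar G=\mathscr{F}(F\cup G)$, so I would pin that down first.

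Since filters are upward closed and closed under $\odot$, and $\odot$ is commutative, every product of generators drawn from $F\cup G$ regroups as a single product $f\odot g$ with $f\in F$ and $g\in G$; hence
$$F\veebar G=\{a\in A\mid f\odot g\leq a\ \textrm{for some}\ f\in F,\ g\in G\}.$$
This is the two-filter analogue of the formula recorded in Proposition \ref{genfilprop}\ref{genfilprop1} and can be deduced from it by iterating. Granting this description, $(1)\Rightarrow(2)$ is immediate: if $F\veebar G=A$ then $0$ lies in the join, so $f\odot g\leq 0$, that is $f\odot g=0$, for suitable $f\in F$ and $g\in G$.

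For $(2)\Rightarrow(3)$ I would take the very element $f$ furnished by $(2)$ and put $a:=f\in F$. The adjunction converts $f\odot g=0$ into $g\leq f\rightarrow 0=\neg f=\neg a$, and since $g\in G$ and $G$ is upward closed, this yields $\neg a\in G$, as wanted. For $(3)\Rightarrow(1)$ I would use the witness $a\in F$ with $\neg a\in G$ and note that $a\odot\neg a=a\odot(a\rightarrow 0)\leq 0$ by adjunction, so $a\odot\neg a=0$; as $F\veebar G$ contains both $a$ and $\neg a$ and is closed under $\odot$, it contains $0$, whence $F\veebar G=A$ by upward closure.

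No individual step is hard, and the main obstacle is really just justifying the product description of $F\veebar G$, where commutativity of $\odot$ and the filter axioms do the work; the remaining passages are each a one-line application of the adjunction together with the fact that $0$ is the least element.
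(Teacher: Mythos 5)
Your proof is correct and follows essentially the same route as the paper: the cycle $(1)\Rightarrow(2)\Rightarrow(3)\Rightarrow(1)$, with $(1)\Rightarrow(2)$ read off from the explicit description of the join of filters (the paper cites Proposition \ref{genfilprop} for this), $(2)\Rightarrow(3)$ via the adjunction giving $g\leq\neg f$, and $(3)\Rightarrow(1)$ from $a\odot\neg a=0$. You have merely filled in the details the paper dismisses as evident.
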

\begin{proof}
\item [\ref{compropo1}$\Rightarrow$\ref{compropo2}:] It is evident by Proposition \ref{genfilprop}.
\item [\ref{compropo2}$\Rightarrow$\ref{compropo3}:] Let $f\odot g=0$, for some $f\in F$ and $g\in G$. This implies that $g\leq \neg f$, and this holds the result.
\item [\ref{compropo3}$\Rightarrow$\ref{compropo1}:] It is evident.
\end{proof}

A proper filter of a residuated lattice $\mathfrak{A}$ is called \textit{maximal} if it is a maximal element in the set of all proper filters. The set of maximal filters of $\mathfrak{A}$ is denoted by $Max(\mathfrak{A})$. Zorn's lemma verifies that any proper filter is contained in a maximal filter.
\begin{proposition}\cite[Theorem 3.31]{ciungu2006classes}\label{maxinufilpro}
Let $\mathfrak{A}$ be a residuated lattice and $\mathfrak{m}$ be a proper filter of $\mathfrak{A}$. Then $\mathfrak{m}$ is maximal if and only if for any $x\notin \mathfrak{m}$ there exists an integer $n$ such that $\neg x^n\in \mathfrak{m}$.
\end{proposition}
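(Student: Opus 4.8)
The plan is to prove both implications directly, relying on the explicit description of a generated filter from Proposition \ref{genfilprop}\ref{genfilprop1} together with the basic residuation fact that $a\odot(a\rightarrow 0)\leq 0$ for every $a\in A$. Throughout I read $\neg x^n$ as $\neg(x^n)=x^n\rightarrow 0$, a convention I would fix at the outset to avoid ambiguity. Recall also that, since filters are upward closed and $0$ is the least element, a filter is improper (equal to $A$) precisely when it contains $0$.

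For the forward implication, I would assume $\mathfrak{m}$ is maximal and pick an arbitrary $x\notin \mathfrak{m}$. The decisive step is to form the filter $\mathscr{F}(\mathfrak{m},x)=\mathfrak{m}\veebar \mathscr{F}(x)$, which contains $\mathfrak{m}$ and also contains $x$, hence properly contains $\mathfrak{m}$. By maximality it cannot be a proper filter, so $\mathscr{F}(\mathfrak{m},x)=A$ and in particular $0\in \mathscr{F}(\mathfrak{m},x)$. Feeding this into the description of Proposition \ref{genfilprop}\ref{genfilprop1} yields $f\in \mathfrak{m}$ and $n\in \mathbb{N}$ with $f\odot x^n\leq 0$, that is $f\odot x^n=0$. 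By adjointness this is equivalent to $f\leq x^n\rightarrow 0=\neg x^n$, and since $\mathfrak{m}$ is upward closed I conclude $\neg x^n\in \mathfrak{m}$, as required.

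For the converse, I would argue by contradiction: assume the stated condition holds but $\mathfrak{m}$ is not maximal, so there is a proper filter $G$ with $\mathfrak{m}\subsetneq G\subsetneq A$, and choose $x\in G\setminus \mathfrak{m}$. The hypothesis applied to $x$ gives an integer $n$ with $\neg x^n\in \mathfrak{m}\subseteq G$, while closure of $G$ under $\odot$ forces $x^n\in G$. Hence $x^n\odot \neg x^n\in G$; but $x^n\odot(x^n\rightarrow 0)\leq 0$ shows this product is $0$, so $0\in G$ and therefore $G=A$, contradicting properness. This contradiction establishes maximality of $\mathfrak{m}$.

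I do not anticipate a genuine obstacle here, since both directions are short once the right objects are in hand. The only points needing care are recognizing that the generated filter $\mathscr{F}(\mathfrak{m},x)$ is exactly the object to which maximality should be applied, and handling the passage $f\odot x^n\leq 0 \Leftrightarrow f\leq \neg x^n$ via the adjunction together with upward closure of $\mathfrak{m}$; everything else is a routine use of the filter axioms.
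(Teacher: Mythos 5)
Your proof is correct. Note that the paper does not prove this proposition at all—it is quoted with a citation to Ciungu's \emph{Classes of residuated lattices} (Theorem 3.31)—and your argument is exactly the standard one used there: in the forward direction, apply maximality to $\mathscr{F}(\mathfrak{m},x)=A$, extract $f\odot x^n\le 0$ from the description of the generated filter, and pass to $\neg x^n\in\mathfrak{m}$ via the adjunction and upward closure; in the converse, the identity $x^n\odot\neg x^n=0$ forces any proper filter strictly above $\mathfrak{m}$ to contain $0$. Both directions are complete and use only facts available in the paper (Proposition \ref{genfilprop}\ref{genfilprop1}, the adjoint property, and upward closure of filters), so there is nothing to flag.
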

Let $F$ be a filter of a residuated lattice $\mathfrak{A}$. The intersection of all maximal filters of $\mathfrak{A}$ containing $F$ is denoted by $Rad(F)$. It is well-known that $\mathfrak{A}$ is  \textit{semisimple} algebra, i.e.  it is a subdirect product of simple algebras, provided that $Rad(\mathfrak{A})=\{1\}$.
\begin{proposition}\cite[Theorem 4.3]{freytes2008injectives}\label{radchar}
For any residuated lattice $\mathfrak{A}$ we have $Rad(\mathfrak{A})=\{a\in A\mid a~is~unity\}$.
\end{proposition}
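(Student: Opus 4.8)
The plan is to unwind the definition $Rad(\mathfrak{A})=\bigcap\{\mathfrak{m}\mid \mathfrak{m}\in Max(\mathfrak{A})\}$, so that $a\in Rad(\mathfrak{A})$ means precisely that $a$ belongs to \emph{every} maximal filter, and then to prove the two inclusions of the claimed set-equality separately. Throughout I read ``$a$ is unity'' as the condition that $\neg(a^{n})$ be nilpotent for every $n\in\mathbb{N}$ (equivalently $\neg(a^{n})\in\mathfrak{ni(A)}$ for all $n$), and I read the expression $\neg x^{n}$ of Proposition \ref{maxinufilpro} as $\neg(x^{n})$. The single engine driving both directions is the maximality criterion of Proposition \ref{maxinufilpro}, together with the residuation identity $a\odot(a\rightarrow 0)\leq 0$.

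For the inclusion $\{a\in A\mid a\text{ is unity}\}\subseteq Rad(\mathfrak{A})$ I would argue by contradiction. Suppose $a$ is unity yet $a\notin\mathfrak{m}$ for some maximal filter $\mathfrak{m}$. By Proposition \ref{maxinufilpro} there is an integer $n$ with $\neg(a^{n})\in\mathfrak{m}$. Since $a$ is unity, $\neg(a^{n})$ is nilpotent, say $(\neg(a^{n}))^{k}=0$; as $\mathfrak{m}$ is closed under $\odot$, this forces $0=(\neg(a^{n}))^{k}\in\mathfrak{m}$, contradicting properness of $\mathfrak{m}$. Hence $a$ lies in every maximal filter. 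Note that this is exactly where the quantifier ``for all $n$'' in the definition of unity is needed: Proposition \ref{maxinufilpro} hands us \emph{some} $n$, not necessarily $n=1$, so I must already know $\neg(a^{n})$ is nilpotent for that particular $n$.

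For the reverse inclusion $Rad(\mathfrak{A})\subseteq\{a\in A\mid a\text{ is unity}\}$ I would prove the contrapositive. If $a$ is not unity, fix $n_{0}$ with $b:=\neg(a^{n_{0}})$ non-nilpotent. By Proposition \ref{genfilprop}\ref{genfilprop1} (taking $F=\mathbf{1}$) one has $\mathscr{F}(b)=\{t\in A\mid b^{k}\leq t\text{ for some }k\}$, and $0\in\mathscr{F}(b)$ would entail $b^{k}=0$ for some $k$, i.e.\ nilpotence of $b$; since $b$ is not nilpotent, $\mathscr{F}(b)$ is a proper filter. By Zorn's lemma it extends to a maximal filter $\mathfrak{m}$ with $b\in\mathfrak{m}$. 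Were $a\in\mathfrak{m}$, then $a^{n_{0}}\in\mathfrak{m}$, and from $a^{n_{0}}\odot b=a^{n_{0}}\odot(a^{n_{0}}\rightarrow 0)\leq 0$ together with closure under $\odot$ we would get $0\in\mathfrak{m}$, impossible. Therefore $a\notin\mathfrak{m}$, so $a\notin Rad(\mathfrak{A})$.

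The main obstacle is this second inclusion: unlike the first, it is not a matter of closing a given filter, but of \emph{manufacturing} a maximal filter that excludes $a$. The two delicate points are (i) converting the non-nilpotence of $\neg(a^{n_{0}})$ into the properness of the principal filter it generates, which is what Proposition \ref{genfilprop}\ref{genfilprop1} is for, and (ii) using the residuation counit $a^{n_{0}}\odot(a^{n_{0}}\rightarrow 0)\leq 0$ to force $a$ out of any proper filter that contains $\neg(a^{n_{0}})$. By contrast the first inclusion is routine once Proposition \ref{maxinufilpro} is invoked.
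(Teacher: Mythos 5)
Your proof is correct. Note that the paper does not actually prove this proposition---it is quoted from Freytes' Theorem 4.3 with no argument supplied---so there is no internal proof to compare against; your two-inclusion argument, resting on the maximal-filter criterion of Proposition \ref{maxinufilpro} for one direction and on extending the proper principal filter $\mathscr{F}(\neg(a^{n_{0}}))$ (proper precisely by non-nilpotence, via Proposition \ref{genfilprop}\ref{genfilprop1}) to a maximal filter excluding $a$ for the other, is exactly the standard proof of this fact, including the correct reading of ``unity'' as $\neg(a^{n})$ nilpotent for \emph{all} $n$, which is the quantifier the first inclusion genuinely needs.
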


Following \citet[Definition 4.1]{ciungu2006classes}, a residuated lattice $\mathfrak{A}$ is called \textit{local} provided that it has exactly one maximal filter.
\begin{proposition}\label{ciucharlocal}\cite[Proposition 4.4]{ciungu2006classes}
  Let $\mathfrak{A}$ be a residuated lattice. The following assertions are equivalent:
\begin{enumerate}
   \item [(1) \namedlabel{ciucharlocal1}{(1)}] $\mathfrak{A}$ is local;
   \item [(2) \namedlabel{ciucharlocal2}{(2)}] $\mathfrak{in}(\mathfrak{A})$ is a filter of $\mathfrak{A}$;
   \item [(3) \namedlabel{ciucharlocal3}{(3)}] $\mathfrak{in}(\mathfrak{A})$ is a proper filter of $\mathfrak{A}$;
   \item [(4) \namedlabel{ciucharlocal4}{(4)}] $\mathfrak{in}(\mathfrak{A})$ is the unique maximal filter of $\mathfrak{A}$;
   \item [(5) \namedlabel{ciucharlocal5}{(5)}] $x\odot y\in \mathfrak{ni}(\mathfrak{A})$ implies that $x\in \mathfrak{ni}(\mathfrak{A})$ or $y\in \mathfrak{ni}(\mathfrak{A})$, for any $x,y\in A$.
 \end{enumerate}
\end{proposition}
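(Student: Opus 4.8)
The plan is to establish the cycle $(1)\Rightarrow(4)\Rightarrow(3)\Rightarrow(2)\Rightarrow(5)\Rightarrow(1)$, after recording two structural facts about the nilpotents that render most of the links routine. The first is that $\mathfrak{in}(\mathfrak{A})$ is always an up-set: since $\mathfrak{ni}(\mathfrak{A})$ is an ideal of $\ell(\mathfrak{A})$ it is a down-set, and directly, if $x\le y$ with $y^{n}=0$ then $x^{n}\le y^{n}=0$ by monotonicity of $\odot$, so below a non-nilpotent element sit only non-nilpotent elements. Together with $0\in\mathfrak{ni}(\mathfrak{A})$ and $1\in\mathfrak{in}(\mathfrak{A})$, this makes $\mathfrak{in}(\mathfrak{A})$ a non-void proper up-set. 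The second fact is that every proper filter $F$ is contained in $\mathfrak{in}(\mathfrak{A})$: if some nilpotent $x$ with $x^{n}=0$ lay in $F$, closure under $\odot$ would give $0=x^{n}\in F$ and hence $F=A$.

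With these recorded, the easy links fall out. $(4)\Rightarrow(3)\Rightarrow(2)$ are successive weakenings (a unique maximal filter is a proper filter, and a proper filter is a filter). $(2)\Rightarrow(5)$ is the contrapositive reading of closure under $\odot$: if $\mathfrak{in}(\mathfrak{A})$ is a filter then $x,y\in\mathfrak{in}(\mathfrak{A})$ forces $x\odot y\in\mathfrak{in}(\mathfrak{A})$, which is exactly the stated implication of $(5)$. For $(5)\Rightarrow(1)$, condition $(5)$ says precisely that $\mathfrak{in}(\mathfrak{A})$ is closed under $\odot$; combined with the up-set property and non-voidness from the first fact, $\mathfrak{in}(\mathfrak{A})$ is then a proper filter, and by the second fact it contains every proper filter. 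Being the largest proper filter, it is the unique maximal filter, so $\mathfrak{A}$ is local.

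The hard part is $(1)\Rightarrow(4)$, and it is the only place where uniqueness of the maximal filter is genuinely used. Assuming $\mathfrak{A}$ local with unique maximal filter $\mathfrak{m}$, the second fact gives $\mathfrak{m}\subseteq\mathfrak{in}(\mathfrak{A})$, so only the reverse inclusion needs proof. Given $x\in\mathfrak{in}(\mathfrak{A})$, I would consider the principal filter $\mathscr{F}(x)=\{a\in A\mid x^{n}\le a\text{ for some }n\}$ (Proposition \ref{genfilprop}); it is proper, since $0\in\mathscr{F}(x)$ would mean $x^{n}\le 0$, i.e.\ $x$ nilpotent. By Zorn's lemma $\mathscr{F}(x)$ extends to a maximal filter, which by locality is $\mathfrak{m}$; hence $x\in\mathfrak{m}$. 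This yields $\mathfrak{in}(\mathfrak{A})\subseteq\mathfrak{m}$, so $\mathfrak{m}=\mathfrak{in}(\mathfrak{A})$ and $(4)$ holds. The single delicate observation throughout is that non-nilpotence of $x$ is exactly what guarantees $\mathscr{F}(x)$ avoids $0$ and is therefore proper; everything else is bookkeeping around the down-set structure of $\mathfrak{ni}(\mathfrak{A})$.
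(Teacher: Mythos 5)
The paper itself contains no proof of this proposition: it is imported directly from \cite{ciungu2006classes} (Proposition 4.4), so there is no in-paper argument to compare yours against; what matters is whether your proof stands alone, and it does. The cycle $(1)\Rightarrow(4)\Rightarrow(3)\Rightarrow(2)\Rightarrow(5)\Rightarrow(1)$ closes correctly: your two structural facts (that $\mathfrak{in}(\mathfrak{A})$ is a proper up-set, and that every proper filter avoids nilpotents and hence lies inside $\mathfrak{in}(\mathfrak{A})$) are exactly the right scaffolding; the step $(5)\Rightarrow(1)$ correctly assembles closure under $\odot$, the up-set property, and non-voidness into ``largest proper filter, hence unique maximal filter''; and in $(1)\Rightarrow(4)$ the crux is, as you say, that $\mathscr{F}(x)=\{a\in A\mid x^{n}\le a\text{ for some }n\}$ is proper precisely when $x$ is non-nilpotent, after which Zorn's lemma and uniqueness of the maximal filter finish the argument. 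Two small blemishes, neither fatal. First, the sentence ``below a non-nilpotent element sit only non-nilpotent elements'' has the direction reversed: your displayed implication ($x\le y$ and $y^{n}=0$ give $x^{n}=0$) shows that $\mathfrak{ni}(\mathfrak{A})$ is a down-set, equivalently that everything \emph{above} a non-nilpotent element is non-nilpotent; that is the fact you actually use to conclude $\mathfrak{in}(\mathfrak{A})$ is an up-set. Second, non-voidness of $\mathfrak{in}(\mathfrak{A})$, i.e.\ $1\notin\mathfrak{ni}(\mathfrak{A})$, silently uses $0\neq 1$; in the degenerate one-element residuated lattice, $(5)$ holds vacuously while $(1)$ fails, so the non-degeneracy that the paper assumes tacitly is genuinely needed for the equivalence, and your proof should flag where it enters.
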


A proper filter $\mathfrak{p}$ of $\mathfrak{A}$ is called \textit{prime} if $x\vee y\in \mathfrak{p}$ implies $x\in \mathfrak{p}$ or $y\in \mathfrak{p}$, for any $x,y\in A$. The set of prime filters of $\mathfrak{A}$ is denoted by $Spec(\mathfrak{A})$. Since $\mathscr{F}(\mathfrak{A})$ is a distributive lattice, so $Max(\mathfrak{A})\subseteq Spec(\mathfrak{A})$.

A non-empty subset $\mathscr{C}$ of $\mathfrak{A}$ is called \textit{$\vee$-closed} if it is closed under the join operation, i.e $x,y\in \mathscr{C}$ implies $x\vee y\in \mathscr{C}$.
\begin{theorem}\cite[Theorem 3.18]{rasouli2019going}\label{prfilth}
If $\mathscr{C}$ is a $\vee$-closed subset of $\mathfrak{A}$ which does not meet the filter $F$, then $F$ is contained in a filter $\mathfrak{p}$ which is maximal with respect to the property of not meeting $\mathscr{C}$; furthermore $\mathfrak{p}$ is prime.
\end{theorem}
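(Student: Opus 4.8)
The plan is to obtain $\mathfrak{p}$ from Zorn's lemma and then verify primeness by playing the $\vee$-closure of $\mathscr{C}$ against the generated-filter formula of Proposition \ref{genfilprop}. First I would let $\Sigma$ be the family of all filters $G$ with $F\subseteq G$ and $G\cap \mathscr{C}=\emptyset$, ordered by inclusion. By hypothesis $F$ misses $\mathscr{C}$, so $F\in \Sigma$ and $\Sigma\neq \emptyset$. Given a chain in $\Sigma$, its union is again a filter, since both closure under $\odot$ and upward closure under $\vee$ transfer to a union of a chain, and it still avoids $\mathscr{C}$; hence it is an upper bound in $\Sigma$. Zorn's lemma then furnishes a maximal element $\mathfrak{p}$, and since $\mathscr{C}$ is non-empty while $\mathfrak{p}\cap \mathscr{C}=\emptyset$, $\mathfrak{p}$ is proper.

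The substantive part is to show $\mathfrak{p}$ is prime. Assume towards a contradiction that $x\vee y\in \mathfrak{p}$ while $x\notin \mathfrak{p}$ and $y\notin \mathfrak{p}$. Then $\mathscr{F}(\mathfrak{p},x)$ and $\mathscr{F}(\mathfrak{p},y)$ properly contain $\mathfrak{p}$, so maximality forces each of them to meet $\mathscr{C}$: I pick $c_1\in \mathscr{F}(\mathfrak{p},x)\cap \mathscr{C}$ and $c_2\in \mathscr{F}(\mathfrak{p},y)\cap \mathscr{C}$. By Proposition \ref{genfilprop}\ref{genfilprop1} there are $f_1,f_2\in \mathfrak{p}$ and $n_1,n_2\in \mathbb{N}$ with $f_1\odot x^{n_1}\leq c_1$ and $f_2\odot y^{n_2}\leq c_2$. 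Putting $f=f_1\odot f_2\in \mathfrak{p}$ and $n=\max\{n_1,n_2\}$, and using that $\odot$ is order-preserving with $a\odot b\leq a$ (so powers decrease), I get $f\odot x^n\leq c_1$ and $f\odot y^n\leq c_2$.

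Now the $\vee$-closure of $\mathscr{C}$ gives $c_1\vee c_2\in \mathscr{C}$, and I aim to show at the same time that $c_1\vee c_2\in \mathfrak{p}$, the desired contradiction. By \ref{res1}, $(f\odot x^n)\vee (f\odot y^n)=f\odot (x^n\vee y^n)$, so $c_1\vee c_2\geq f\odot (x^n\vee y^n)$. The key inequality is $(x\vee y)^{2n-1}\leq x^n\vee y^n$: expanding $(x\vee y)^{2n-1}$ by repeated use of \ref{res1} yields a join of products of $2n-1$ factors, each factor being $x$ or $y$, and in any such product at least $n$ of the factors coincide, so each term is below $x^n$ or below $y^n$. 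Consequently $c_1\vee c_2\geq f\odot (x\vee y)^{2n-1}$. Since $x\vee y\in \mathfrak{p}$ and $\mathfrak{p}$ is closed under $\odot$, the element $f\odot (x\vee y)^{2n-1}$ lies in $\mathfrak{p}$, and upward closure then places $c_1\vee c_2\in \mathfrak{p}$, contradicting $\mathfrak{p}\cap \mathscr{C}=\emptyset$. Hence no such $x,y$ exist and $\mathfrak{p}$ is prime.

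I expect the main obstacle to be precisely the passage from $x\vee y\in \mathfrak{p}$ to the existence of an element of $\mathfrak{p}$ lying beneath $x^n\vee y^n$; everything else is the routine Zorn argument. The combinatorial inequality $(x\vee y)^{2n-1}\leq x^n\vee y^n$, justified purely through the distributivity law \ref{res1} together with the decreasing behaviour of powers, is exactly the device that bridges this gap and lets the $\vee$-closed set $\mathscr{C}$ interact with the filter structure.
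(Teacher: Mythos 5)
Your proof is correct. Note that the paper itself gives no proof of this theorem---it is quoted from \cite[Theorem 3.18]{rasouli2019going}---so the comparison can only be against the standard argument for such prime-separation results. Your Zorn's lemma setup, the properness observation (which matters, since primeness presupposes properness), and the use of maximality to force both $\mathscr{F}(\mathfrak{p},x)$ and $\mathscr{F}(\mathfrak{p},y)$ to meet $\mathscr{C}$ are exactly the standard skeleton. Where you genuinely diverge is in the key inequality: the usual route (and the likely reason the paper records property \ref{res2} in \textsc{Remark} \ref{resproposition}) is to apply $x\vee (y\odot z)\geq (x\vee y)\odot(x\vee z)$ repeatedly, giving
\[
c_1\vee c_2 \;\geq\; (f_1\odot x^{n_1})\vee(f_2\odot y^{n_2}) \;\geq\; (f_1\vee f_2)\odot(f_1\vee y^{n_2})\odot(x^{n_1}\vee f_2)\odot(x^{n_1}\vee y^{n_2}),
\]
where the first three factors lie in $\mathfrak{p}$ by upward closure and the last dominates $(x\vee y)^{n_1 n_2}\in\mathfrak{p}$. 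You instead avoid \ref{res2} entirely and prove $(x\vee y)^{2n-1}\leq x^n\vee y^n$ by expanding with \ref{res1} and a pigeonhole count on the factors, using integrality ($a\odot b\leq a$, hence decreasing powers) to bound each term by $x^n$ or $y^n$. Both devices close the same gap; yours is arguably more elementary in that it needs only distributivity and integrality, and it even gives a sharper exponent, while the \ref{res2}-based argument generalizes more smoothly to settings where one does not want to expand products combinatorially. All the supporting steps you use (monotonicity of $\odot$, $f=f_1\odot f_2$ with $n=\max\{n_1,n_2\}$ absorbing both witnesses, upward closure of $\mathfrak{p}$) are valid in the paper's definition of residuated lattice, so there is no gap.
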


\begin{example}\label{maxminex}
Consider the residuated lattice $\mathfrak{A}_6$ from Example \ref{exa6} and the residuated lattice $\mathfrak{A}_8$ from Example \ref{exa8}. The sets of their maximal and prime filters are presented in Table \ref{prfiltab}.
\begin{table}[h]
\centering
\begin{tabular}{ccc}
\hline
                 & \multicolumn{2}{c}{Prime filters}      \\ \hline
                 & Maximal filters &                \\
$\mathfrak{A}_6$ &  $\{a,b,d,1\},\{c,d,1\}$                 & $\{d,1\},\{1\}$\\
$\mathfrak{A}_8$ &  $\{a,c,d,e,f,1\}$                          &         $\{c,e,1\},\{f,1\}$ \\ \hline
\end{tabular}
\caption{The sets of maximal and prime filters of $\mathfrak{A}_6$ and $\mathfrak{A}_8$}
\label{prfiltab}
\end{table}
\end{example}

Let $\mathfrak{A}$ be a residuated lattice and $\Pi$ a collection of prime filters of $\mathfrak{A}$.
For a subset $\pi$ of $\Pi$ we set $k(\pi)=\bigcap \pi$, and for a subset $X$ of $A$ we set $h_{\Pi}(X)=\{P\in \Pi\mid X\subseteq  P\}$ and $d_{\Pi}(X)=\Pi\setminus h_{\Pi}(X)$. The collection $\Pi$ can be topologized by taking the collection $\{h_{\Pi}(x)\mid x\in A\}$  as a closed (an open) basis, which is called \textit{the (dual) hull-kernel topology} on $\Pi$ and denoted by $\Pi_{h(d)}$. Also, the generated
topology by $\tau_{h}\cup \tau_{d}$ is called \textit{the patch topology} and denoted by $\tau_{p}$. For a subset $X$ of $A$, we set $H_{\Pi}(X)=\{h_{\Pi}(x)\mid x\in X\}$. As usual, the Boolean lattice of all clopen subsets of a topological space $A_{\tau}$ shall be denoted by $Clop(A_{\tau})$. For a detailed discussion on the (dual) hull-kernel and patch topologies on a residuated lattice, we refer to \cite{rasouli2018hull}.
\begin{proposition}\citep{rasouli2018hull}\label{minpro}
Let $\mathfrak{A}$ be a residuated lattice. We have:
\[Clop(Spec_{h}(\mathfrak{A}))=H(\beta(\mathfrak{A}))\]
\end{proposition}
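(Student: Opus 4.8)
The plan is to establish both inclusions of the identity $Clop(Spec_h(\mathfrak{A})) = H(\beta(\mathfrak{A}))$, where I read $\beta(\mathfrak{A})$ as the Boolean center $\{a \in A \mid a \vee \neg a = 1\}$ of $\mathfrak{A}$. I would begin by recording how the map $x \mapsto h(x)$ behaves: for a prime filter $P$ one has $x \vee y \in P$ iff $x \in P$ or $y \in P$, and $x \odot y \in P$ iff $x \in P$ and $y \in P$ (the forward directions use upward closure of filters together with $x \odot y \le x$, the backward ones primeness and closure under $\odot$). Hence $h(x \vee y) = h(x) \cup h(y)$ and $h(x \odot y) = h(x) \cap h(y)$, while $h(0) = \varnothing$ and $h(1) = Spec(\mathfrak{A})$. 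Two auxiliary facts will be used throughout: the closed sets of $Spec_h(\mathfrak{A})$ are exactly the sets $h(F) = \bigcap_{x \in F} h(x)$ with $F \in \mathscr{F}(\mathfrak{A})$, and $\bigcap Spec(\mathfrak{A}) = \{1\}$ — the latter because, for $x \ne 1$, Theorem \ref{prfilth} applied to the $\vee$-closed set $\{x\}$ and the filter $\{1\}$ produces a prime filter avoiding $x$.

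For the inclusion $H(\beta(\mathfrak{A})) \subseteq Clop(Spec_h(\mathfrak{A}))$, take $b \in \beta(\mathfrak{A})$. Then $h(b) \cup h(\neg b) = h(b \vee \neg b) = h(1) = Spec(\mathfrak{A})$, and $h(b) \cap h(\neg b) = h(b \odot \neg b) = h(0) = \varnothing$, using that $b \odot \neg b = 0$ holds in every residuated lattice by residuation. Thus $h(\neg b)$ is the set-theoretic complement of the basic closed set $h(b)$, so $h(b)$ is clopen.

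The reverse inclusion is the heart of the matter. First I would show $Spec_h(\mathfrak{A})$ is compact: if basic opens $d(x_i) = Spec(\mathfrak{A}) \setminus h(x_i)$ cover, then $\bigcap_i h(x_i) = \varnothing$, so $\mathscr{F}(\{x_i\}) = A$, and the standard description of generated filters (cf. Proposition \ref{genfilprop}) forces $x_{i_1} \odot \cdots \odot x_{i_n} = 0$ for finitely many indices, yielding a finite subcover. Now let $U$ be clopen. As a closed set $U = h(F)$; since its complement is closed, say $h(G)$, we have $U = \bigcup_{g \in G} d(g)$, and as a closed subset of the compact space $U$ is compact, so finitely many $d(g)$ already cover $U$; then $d(g_1) \cup \cdots \cup d(g_n) = d(g_1 \odot \cdots \odot g_n) = d(g)$ with $g \in G$ (because $h(g_1) \cap \cdots \cap h(g_n) = h(g_1 \odot \cdots \odot g_n)$). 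Symmetrically $Spec(\mathfrak{A}) \setminus U$ is compact and $U = h(f)$ for some $f$. From $U = h(f) = d(g)$ I read off $h(f) \cup h(g) = Spec(\mathfrak{A})$ and $h(f) \cap h(g) = \varnothing$; by the two facts above this means $f \vee g = 1$ and $f \odot g$ nilpotent, say $(f \odot g)^m = 0$. Setting $u = f^m$ and $v = g^m$ gives $u \odot v = 0$ with $h(u) = U$ and $h(v) = Spec(\mathfrak{A}) \setminus U$ (as $h(x^k) = h(x)$); an induction on exponents using \ref{res2}, e.g. $f \vee g^{k+1} = f \vee (g^k \odot g) \ge (f \vee g^k) \odot (f \vee g) = 1$, upgrades $f \vee g = 1$ to $u \vee v = 1$. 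Finally I put $b := \neg v$: from $u \odot v = 0$ we get $u \le \neg v = b$, so $b \vee \neg b \ge b \vee v \ge u \vee v = 1$ (using $v \le \neg\neg v = \neg b$), whence $b \in \beta(\mathfrak{A})$; and $h(u) \subseteq h(b)$ together with $h(b) \cap h(v) = h(b \odot v) = h(0) = \varnothing$ forces $h(b) = h(u) = U$. Hence $U \in H(\beta(\mathfrak{A}))$.

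The main obstacle is precisely this passage from a clopen set to a genuine Boolean element. In a ring or a distributive lattice the comaximality data would immediately yield complementary idempotents, but here $\mathfrak{A}$ need not be idempotent, so the raw witnesses satisfy only $f \vee g = 1$ and $f \odot g \in \mathfrak{ni}(\mathfrak{A})$; one must first raise to the nilpotence power (controlling the join through \ref{res2}) and then rectify the witness to $b = \neg v$ in order to land inside $\beta(\mathfrak{A})$. The compactness of $Spec_h(\mathfrak{A})$ is the second essential ingredient, as it is what lets an abstract clopen set be squeezed between the two principal sets $h(f)$ and $d(g)$ in the first place.
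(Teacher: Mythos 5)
Your proof is correct. There is, however, nothing in this paper to compare it against: Proposition \ref{minpro} is imported without proof from \citep{rasouli2018hull}, so your argument stands as a self-contained reconstruction, and I can only assess it on its own terms.

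On those terms it holds up. Your reading of $\beta(\mathfrak{A})$ as the Boolean center $\{a\in A\mid a\vee\neg a=1\}$ is the intended one --- the paper never defines $\beta(\mathfrak{A})$, but this is the standard meaning in the cited series of papers and the only one consistent with how $\beta(\mathfrak{A})$ is used in Proposition \ref{dabeta}. The key steps are all sound: the identities $h(x\vee y)=h(x)\cup h(y)$ and $h(x\odot y)=h(x)\cap h(y)$; the fact $k(Spec(\mathfrak{A}))=\{1\}$ obtained from Theorem \ref{prfilth} applied to the $\vee$-closed singleton $\{x\}$; the easy inclusion via $b\odot\neg b=0$; compactness of $Spec_{h}(\mathfrak{A})$ (consistent with Proposition \ref{hulkerinstr}); the compactness squeeze producing $U=h(f)=d(g)$ with $f\in F$, $g\in G$; the extraction of $f\vee g=1$ and nilpotence $(f\odot g)^{m}=0$; the induction via \ref{res2} upgrading $f\vee g=1$ to $f^{m}\vee g^{m}=1$; and the final rectification $b=\neg v$, $v=g^{m}$, which lies in $\beta(\mathfrak{A})$ because $f^{m}\le\neg v$ and $v\le\neg\neg v$, and satisfies $h(b)=U$. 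This last rectification step is exactly the point a naive idempotent-style argument would miss in a non-idempotent residuated lattice, and you handle it correctly. The only blemish is cosmetic: in your opening remark the attribution of the four implications is garbled (the direction $x\vee y\in P\Rightarrow x\in P$ or $y\in P$ is primeness, while its converse is upward closure, and neither direction of the $\odot$ equivalence needs primeness), but every implication is indeed justified by the facts you list, so nothing is at stake mathematically.
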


\begin{proposition}\citep{rasouli2018hull}\label{hulkerinstr}
Let $\mathfrak{A}$ be a residuated lattice. The following assertions hold:
\begin{enumerate}
\item  [$(1)$ \namedlabel{hulkerinstr1}{$(1)$}] $\mathfrak{A}$ is semisimple if and only if $Max(\mathfrak{A})$ is dense in $Spec_{h}(\mathfrak{A})$;
\item  [$(2)$ \namedlabel{hulkerinstr2}{$(2)$}] $Spec_{h}(\mathfrak{A})$ and $Max_{h}(\mathfrak{A})$ are compact.
\end{enumerate}
\end{proposition}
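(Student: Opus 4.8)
The plan is to reduce both assertions to two standard facts about the hull-kernel topology: an explicit description of the closed sets, and a compactness argument based on finitely generated filters.

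First I would record, for any collection $\Pi$ of prime filters, that a subset of $\Pi$ is closed in $\Pi_h$ precisely when it has the form $h_\Pi(X)$ for some $X\subseteq A$. Indeed $h_\Pi(X)=\bigcap_{x\in X}h_\Pi(x)$, so each such set is closed; conversely, since every $P\in\Pi$ is an upward-closed prime filter, one checks $h_\Pi(x)\cup h_\Pi(y)=h_\Pi(x\vee y)$, together with $h_\Pi(0)=\emptyset$ and $h_\Pi(1)=\Pi$, so the family $\{h_\Pi(X)\mid X\subseteq A\}$ is closed under finite unions and arbitrary intersections and is therefore exactly the family of closed sets. From this the closure of any $\pi\subseteq\Pi$ is $\overline{\pi}=h_\Pi(k(\pi))$: a closed set $h_\Pi(X)$ contains $\pi$ iff $X\subseteq\bigcap\pi=k(\pi)$, and the smallest such closed set is obtained at $X=k(\pi)$.

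For \ref{hulkerinstr1} I would apply this with $\Pi=Spec(\mathfrak{A})$ and $\pi=Max(\mathfrak{A})$. Since $k(Max(\mathfrak{A}))=Rad(\mathfrak{A})$, the closure of $Max(\mathfrak{A})$ in $Spec_h(\mathfrak{A})$ is $h_{Spec}(Rad(\mathfrak{A}))$, so $Max(\mathfrak{A})$ is dense iff every prime filter contains $Rad(\mathfrak{A})$, i.e. iff $Rad(\mathfrak{A})\subseteq\bigcap Spec(\mathfrak{A})$. It then remains to establish $\bigcap Spec(\mathfrak{A})=\{1\}$, because density becomes equivalent to $Rad(\mathfrak{A})=\{1\}$, which is semisimplicity. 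To get $\bigcap Spec(\mathfrak{A})=\{1\}$, take any $a\neq 1$; the singleton $\{a\}$ is $\vee$-closed and does not meet the filter $\{1\}$, so Theorem \ref{prfilth} yields a prime filter $\mathfrak{p}\supseteq\{1\}$ with $a\notin\mathfrak{p}$. Hence no element other than $1$ lies in every prime filter.

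For \ref{hulkerinstr2} I would prove compactness by showing that every cover by basic open sets has a finite subcover, which suffices since the $d_\Pi(x)$ form a basis. Let $\{d_\Pi(x_i)\}_{i\in I}$ cover $\Pi$, with $\Pi=Spec(\mathfrak{A})$ or $\Pi=Max(\mathfrak{A})$. Then $\bigcap_i h_\Pi(x_i)=\emptyset$, so no filter in $\Pi$ contains all the $x_i$. Consider $F=\mathscr{F}(\{x_i\mid i\in I\})$; were $F$ proper it would lie in a maximal, hence prime, filter containing every $x_i$, a contradiction, so $F=A$ and thus $0\in F$. By the description of generated filters following from Proposition \ref{genfilprop}, there are finitely many indices with $x_{i_1}\odot\cdots\odot x_{i_n}=0$; no proper filter can contain all of $x_{i_1},\dots,x_{i_n}$ (it would then contain $0$), so $\{d_\Pi(x_{i_j})\}_{j=1}^{n}$ already covers $\Pi$. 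This delivers compactness of both $Spec_h(\mathfrak{A})$ and $Max_h(\mathfrak{A})$.

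The genuinely load-bearing step is the identity $\bigcap Spec(\mathfrak{A})=\{1\}$ in the converse of \ref{hulkerinstr1}, which rests entirely on the prime-filter existence result Theorem \ref{prfilth}; the remaining verifications ($h_\Pi(x)\cup h_\Pi(y)=h_\Pi(x\vee y)$ and the finite-product form of $\mathscr{F}(X)$) are routine. I therefore expect that step, rather than the compactness argument, to be the main obstacle.
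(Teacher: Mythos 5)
Your argument is correct, but there is no internal proof to compare it against: Proposition \ref{hulkerinstr} is quoted by the paper from the companion work \citep{rasouli2018hull}, and no proof is reproduced here. Judged on its own terms, your proof is sound and self-contained, and it uses only tools the present paper makes available: Theorem \ref{prfilth} (existence of prime filters avoiding a $\vee$-closed set), Proposition \ref{genfilprop}, Zorn's lemma, and the primality identity $h_{\Pi}(x)\cup h_{\Pi}(y)=h_{\Pi}(x\vee y)$. Two remarks. First, your preliminary description of the closed sets and the resulting closure formula $cl_{h}(\pi)=h_{\Pi}(k(\pi))$ is precisely Lemma \ref{retractlemma}\ref{retractlemma1}, which the paper already imports; you could cite it and skip that verification, so the genuinely new content of your part (1) is the reduction of density to $Rad(\mathfrak{A})\subseteq \bigcap Spec(\mathfrak{A})$ together with the observation $\bigcap Spec(\mathfrak{A})=\{1\}$ via Theorem \ref{prfilth} applied to the singleton $\{a\}$, $a\neq 1$ --- and that is indeed the load-bearing step, as you say. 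Second, in the compactness argument the conclusion to draw from $0\in \mathscr{F}(\{x_i\mid i\in I\})$ is that some finite product of \emph{powers} of the $x_i$ equals $0$; since your product $x_{i_1}\odot\cdots\odot x_{i_n}=0$ allows repeated indices, this is only a cosmetic point, and the rest of the argument (no proper filter contains all of $x_{i_1},\dots,x_{i_n}$, hence the sets $d_{\Pi}(x_{i_j})$ already cover $\Pi$) goes through verbatim and, pleasantly, handles $\Pi=Spec(\mathfrak{A})$ and $\Pi=Max(\mathfrak{A})$ uniformly.
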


Let $\Pi$ be a collection of prime filters in a residuated lattice $\mathfrak{A}$. Following \citet[p. 290]{de1983projectivity}, if $\pi$ is a subset of $\Pi$, its \textit{specialization} (\textit{generalization}) in $\Pi$, $\mathscr{S}_{\Pi}(\pi)$ ($\mathscr{G}_{\Pi}(\pi)$), is the set of all primes in $\Pi$, which contain (are contained in) some prime belonging to $\pi$. One can see that $\mathscr{S}$ and $\mathscr{G}$ are closure operators on the power set of $Spec(\mathfrak{A})$. A fixed point of $\mathscr{S(G)}$ is called $\mathscr{S}_{\Pi}$-stable ($\mathscr{G}_{\Pi}$-stable). If $\Pi$ is understood, it will be dropped. Notice that for any subset $B$ of $A$, $\bigcup_{b\in B}h(b)(\bigcup_{b\in B}d(b))$ is $\mathscr{S(G)}$-stable. In the following, for a given subset $\pi$ of $\Pi$, $cl^{\Pi}_{h(d)}(\pi)$ stands for the closure of $\pi$ in the topological space $\Pi_{h(d)}$. If $\Pi$ is understood, it will be dropped.
\begin{lemma}\citep[Theorem 3.14]{rasouli2018hull}\label{retractlemma}
Let $\mathfrak{A}$ be a residuated lattice, $\Pi$ a collection of prime filters of $\mathfrak{A}$, and $\pi\subseteq \Pi$. The following assertions hold:
\begin{enumerate}
  \item [$(1)$ \namedlabel{retractlemma1}{$(1)$}] $cl_{h}(\pi)=hk(\pi)$;
  \item [$(2)$ \namedlabel{retractlemma2}{$(2)$}] if $\pi$ is compact w.r.t the dual hull-kernel topology, then $cl_{h}(\pi )=\mathscr{S}(\pi)$;
  \item [$(3)$ \namedlabel{retractlemma3}{$(3)$}] if $\pi$ is compact w.r.t the hull-kernel topology, then $cl_{d}(\pi )=\mathscr{G}(\pi)$.
\end{enumerate}
\end{lemma}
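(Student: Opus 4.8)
The plan is to settle (1) as a general property of the hull-kernel closure and then obtain (2) and (3) by two mutually dual compactness arguments. For (1), I would first observe that, since the open base of $\Pi_h$ is $\{d(x)\mid x\in A\}$, every closed set of $\Pi_h$ is an intersection $\bigcap_{x\in X}h(x)=h(X)=h(\mathscr{F}(X))$, hence has the form $h(F)$ for a filter $F$ (here $X\subseteq P\iff\mathscr{F}(X)\subseteq P$ because $P$ is a filter). Now $hk(\pi)=h(k(\pi))$ is closed and contains $\pi$, since $k(\pi)=\bigcap\pi\subseteq P$ for every $P\in\pi$; this gives $cl_h(\pi)\subseteq hk(\pi)$. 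Conversely, any basic closed set $h(F)\supseteq\pi$ forces $F\subseteq\bigcap\pi=k(\pi)$, whence $h(F)\supseteq h(k(\pi))=hk(\pi)$; intersecting over all closed sets containing $\pi$ yields $cl_h(\pi)\supseteq hk(\pi)$, so (1) holds.

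For (2) the easy inclusion $\mathscr{S}(\pi)\subseteq hk(\pi)=cl_h(\pi)$ needs no hypothesis: if $P\subseteq Q$ with $P\in\pi$, then $k(\pi)\subseteq P\subseteq Q$. The reverse inclusion is where the compactness of $\pi$ in $\Pi_d$ enters. Given $Q\in hk(\pi)$, i.e. $k(\pi)\subseteq Q$, suppose for contradiction that no $P\in\pi$ lies below $Q$, and choose $x_P\in P\setminus Q$ for each $P\in\pi$. Then $\{h(x_P)\mid P\in\pi\}$ covers $\pi$ by sets that are open in the dual hull-kernel topology, so compactness extracts $P_1,\dots,P_n$ with $\pi\subseteq\bigcup_i h(x_{P_i})=h\!\left(\bigvee_i x_{P_i}\right)$; hence $x:=\bigvee_i x_{P_i}\in\bigcap\pi=k(\pi)\subseteq Q$, and primeness of $Q$ forces some $x_{P_i}\in Q$, contradicting $x_{P_i}\notin Q$. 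Therefore $Q\in\mathscr{S}(\pi)$.

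Part (3) runs dually. I would first compute $cl_d(\pi)$ directly: the basic closed sets of $\Pi_d$ are the $d(x)$, and $\pi\subseteq d(x)$ holds iff $x\notin\bigcup\pi$, so $cl_d(\pi)=\{Q\in\Pi\mid Q\subseteq\bigcup\pi\}$, which plainly contains $\mathscr{G}(\pi)$. For the reverse inclusion under $\Pi_h$-compactness, take $Q\subseteq\bigcup\pi$ with $Q\not\subseteq P$ for every $P\in\pi$, choose $y_P\in Q\setminus P$, and note that $\{d(y_P)\mid P\in\pi\}$ covers $\pi$ by hull-kernel-open sets; a finite subcover produces $y:=\bigwedge_i y_{P_i}$ with $\pi\subseteq d(y)$, i.e. $y\notin\bigcup\pi$, whereas $y\in Q\subseteq\bigcup\pi$ because the filter $Q$ is closed under $\wedge$ — a contradiction, so $Q\in\mathscr{G}(\pi)$. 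The routine identities $h(x)\cup h(y)=h(x\vee y)$, $d(x)\cup d(y)=d(x\wedge y)$ and the meet-closure of filters I would dispatch in a line each. The only genuine subtlety, and the sole place the hypotheses are used, is pairing the right compactness with the right cover: the dual-open sets $h(x_P)$ in (2) require $\Pi_d$-compactness, while the hull-kernel-open sets $d(y_P)$ in (3) require $\Pi_h$-compactness, and one must invoke primeness of $Q$ in (2) but meet-closure of $Q$ in (3).
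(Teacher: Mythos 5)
Your proof is correct, and all three parts are sound: the computation $cl_h(\pi)=hk(\pi)$ from the closed basis $\{h(x)\}$, the $\Pi_d$-compactness argument with the cover $\{h(x_P)\}$ and primeness of $Q$ in (2), and the dual $\Pi_h$-compactness argument with the cover $\{d(y_P)\}$ and meet-closure of $Q$ (via $x\odot y\le x\wedge y$) in (3) all check out. Note, however, that this paper does not prove the lemma at all --- it is imported verbatim from the cited reference (Theorem 3.14 of the authors' hull-kernel paper, listed as submitted), so there is no in-paper proof to compare against; your argument is the natural self-contained one and could stand in for the citation.
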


The following theorem characterizes the closed sets of the hull-kernel topology.
\begin{theorem}\cite[Theorem 4.30]{rasouli2018hull}\label{closefalzai}
  Let $\mathfrak{A}$ be a residuated lattice and $\pi$ a subset of $Spec(\mathfrak{A})$. $\pi$ is closed under the hull-kernel topology if and only if it is closed under the patch topology and $\mathscr{S}$-stable.
\end{theorem}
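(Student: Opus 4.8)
The plan is to interpret both implications through the closure formulas of Lemma~\ref{retractlemma} and to reduce the substantive direction to the compactness of the patch topology; throughout I abbreviate $h_{\Pi},d_{\Pi},\dots$ to $h,d,\dots$ with $\Pi=Spec(\mathfrak{A})$.

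The forward implication is immediate. If $\pi$ is $\tau_{h}$-closed then, since $\tau_{p}$ is generated by $\tau_{h}\cup\tau_{d}$ and hence refines $\tau_{h}$, $\pi$ is $\tau_{p}$-closed. For $\mathscr{S}$-stability, Lemma~\ref{retractlemma}(1) gives $\pi=cl_{h}(\pi)=hk(\pi)=\{P\in\Pi\mid k(\pi)\subseteq P\}$; if $P\in hk(\pi)$ and $Q$ is a prime filter with $P\subseteq Q$, then $k(\pi)\subseteq P\subseteq Q$, so $Q\in hk(\pi)=\pi$. Thus $\mathscr{S}(\pi)=\pi$. For the converse I would assume $\pi$ is $\tau_{p}$-closed and $\mathscr{S}$-stable and show $\pi=cl_{h}(\pi)$. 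The key reduction is that $\pi$ is compact in the dual hull-kernel topology: since $\tau_{d}\subseteq\tau_{p}$, any $\tau_{p}$-compact set is $\tau_{d}$-compact, and $\pi$, being $\tau_{p}$-closed, is $\tau_{p}$-compact once the whole patch space is known to be compact. Granting that, Lemma~\ref{retractlemma}(2) yields $cl_{h}(\pi)=\mathscr{S}(\pi)$, and together with $\mathscr{S}(\pi)=\pi$ this gives $cl_{h}(\pi)=\pi$, that is, $\pi$ is $\tau_{h}$-closed.

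Everything thus hinges on the compactness of $\tau_{p}$, which is the main obstacle and which I would establish by the Alexander subbase lemma applied to the subbasis $\{d(x)\mid x\in A\}\cup\{h(x)\mid x\in A\}$. Suppose $\{d(x_{i})\}_{i\in I}\cup\{h(y_{j})\}_{j\in J}$ covers $Spec(\mathfrak{A})$ with no finite subcover; a prime $P$ fails to be covered exactly when $x_{i}\in P$ for all $i$ and $y_{j}\notin P$ for all $j$. Put $F=\mathscr{F}(\{x_{i}\}_{i\in I})$ and let $\mathscr{C}$ be the $\vee$-closed set of all finite joins $y_{j_{1}}\vee\cdots\vee y_{j_{k}}$. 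If some such join lay in $F$, then by the description of generated filters (Proposition~\ref{genfilprop}(1)) a finite product $x_{i_{1}}\odot\cdots\odot x_{i_{m}}$ would be below it, and then the finite subfamily indexed by these $i$'s and $j$'s would already cover $Spec(\mathfrak{A})$ (any escaping prime would contain the product, hence the join, hence, by primeness, some $y_{j}$, a contradiction); this contradicts the no-finite-subcover hypothesis, so $F$ does not meet $\mathscr{C}$. Theorem~\ref{prfilth} then supplies a prime $P\supseteq F$ disjoint from $\mathscr{C}$, whence $x_{i}\in P$ for all $i$ and $y_{j}\notin P$ for all $j$, so $P$ is uncovered---the desired contradiction. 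Hence $\tau_{p}$ is compact and the argument closes. The only points needing care are the degenerate cases $I=\varnothing$ or $J=\varnothing$ (handled directly, the latter using that a proper $F$ extends to a prime filter) and the routine check that $\mathscr{C}$ is $\vee$-closed.
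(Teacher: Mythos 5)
Your proposal is correct, but there is nothing internal to compare it against: the paper does not prove Theorem~\ref{closefalzai} at all --- it imports the statement from the external reference \cite{rasouli2018hull} (Theorem 4.30 there), so any proof must be reconstructed, as you did. Your reconstruction is sound and uses exactly the toolkit the paper itself quotes. The forward direction via Lemma~\ref{retractlemma}\ref{retractlemma1} ($\pi=hk(\pi)$, which is visibly closed under specialization) is fine, and the converse correctly reduces to: patch-compactness of $Spec(\mathfrak{A})$ $\Rightarrow$ a $\tau_{p}$-closed set is $\tau_{p}$-compact, hence $\tau_{d}$-compact, hence $cl_{h}(\pi)=\mathscr{S}(\pi)=\pi$ by Lemma~\ref{retractlemma}\ref{retractlemma2}. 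The real content is then the compactness of the patch topology, and your Alexander-subbase argument is the standard and correct way to get it: the dichotomy ``some finite join of the $y_{j}$'s lies in $\mathscr{F}(\{x_{i}\})$'' versus ``Theorem~\ref{prfilth} produces an uncovered prime'' is exactly right, including your treatment of the degenerate cases $I=\varnothing$, $J=\varnothing$. Two small points of care, neither a gap: (i) Proposition~\ref{genfilprop}\ref{genfilprop1} only describes $\mathscr{F}(F,x)$, so the description of $\mathscr{F}(\{x_{i}\}_{i\in I})$ as the upward closure of finite products needs the routine remark that $\mathscr{F}(X)$ is the directed union of $\mathscr{F}(Y)$ over finite $Y\subseteq X$, obtained by iterating \ref{genfilprop1}; (ii) one should note that $\{d(x)\mid x\in A\}$ and $\{h(x)\mid x\in A\}$ are genuinely bases (not just subbases) of $\tau_{h}$ and $\tau_{d}$ on primes, via $d(x)\cap d(y)=d(x\vee y)$ and $h(x)\cap h(y)=h(x\odot y)$, so that their union is a legitimate subbasis for $\tau_{p}$ to feed into Alexander's lemma. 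What your route buys is self-containedness: patch-compactness is itself a result of \cite{rasouli2018hull} that this paper never quotes (Proposition~\ref{hulkerinstr}\ref{hulkerinstr2} only gives compactness of $Spec_{h}$ and $Max_{h}$), and you supply it from first principles rather than citing it.
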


Let $\mathfrak{A}$ be a residuated lattice. For any subset $X$ of $A$, we set $X^{\perp}=kd(X)$, $\Gamma(\mathfrak{A})=\{X^{\perp}|X\subseteq A\}$, and $\gamma(\mathfrak{A})=\{x^{\perp}|x\in A\}$. Elements of $\Gamma(\mathfrak{A})$ and $\gamma(\mathfrak{A})$ are called \textit{coannihilators} and \textit{coannulets} of $\mathfrak{A}$, respectively. By \citet[Proposition 3.13]{rasouli2018generalized} follows that $(\Gamma(\mathfrak{A});\cap,\vee^{\Gamma},\{1\},A)$ is a complete Boolean lattice in which for any $\mathscr{F}\subseteq \Gamma(\mathfrak{A})$ we have $\vee^{\Gamma} \mathscr{F}=(\cup \mathscr{F})^{\perp\perp}$. By \citet[Proposition 2.15]{rasouli2020quasicomplemented} follows that $\gamma(\mathfrak{A})$ is a sublattice of $\Gamma(\mathfrak{A})$.  $\mathfrak{A}$ shall be called \textit{Baer} provided that $\Gamma(\mathfrak{A})$ is a sublattice of $\mathscr{F}(\mathfrak{A})$, and \textit{Rickart} provided that $\gamma(\mathfrak{A})$ is a Boolean sublattice of $\mathscr{F}(\mathfrak{A})$. For the basic facts concerning coannihilators and coannulets of residuated lattices we refer to \cite{rasouli2018generalized}.

    Let $\mathfrak{A}$ be a residuated lattice. For an ideal $I$ of $\ell(\mathfrak{A})$, set $\omega(I)=\{a\in A|a\vee x=1,\textrm{~for~some~} x\in I\}$, and $\Omega(\mathfrak{A})=\{\omega(I)|I\in \ell(\mathfrak{A})\}$. Using Proposition 3.4 of \cite{rasouli2018n}, it follows that $\Omega(\mathfrak{A})\subseteq \mathscr{F}(\mathfrak{A})$, and so elements of $\Omega(\mathfrak{A})$ are called \textit{$\omega$-filters} of $\mathfrak{A}$. For an $\omega$-filter $F$ of $\mathfrak{A}$, $I_{F}$ denoted an ideal of $\ell(\mathfrak{A})$, which satisfies $F=\omega(I_{F})$. \citet[Proposition 3.7]{rasouli2018n} show that $(\Omega(\mathfrak{A});\cap,\vee^{\omega},\{1\},A)$ is a bounded distributive lattice, in which $F\vee^{\omega} G=\omega(I_{F}\curlyvee I_{G})$, for any $F,G\in \Omega(\mathfrak{A})$ (by $\curlyvee$, we mean the join operation in the lattice of ideals of $\ell(\mathfrak{A})$). For a prime filter $\mathfrak{p}$ of $\mathfrak{A}$, $\omega(\dot{\mathfrak{p}})$ is called the $D$-part of $\mathfrak{p}$ and denoted by $D(\mathfrak{p})$. For the basic facts concerning $\omega$-filters of a residuated lattice we refer to \cite{rasouli2018n}.
\begin{proposition}\citep{rasouli2018n}\label{omegprop}
    Let $\mathfrak{A}$ be residuated lattice. The following assertions hold:
    \begin{enumerate}
\item  [$(1)$ \namedlabel{omegprop1}{$(1)$}] $\gamma(\mathfrak{A})$ is a sublattice of $\Omega(\mathfrak{A})$;
\item  [$(2)$ \namedlabel{omegprop2}{$(2)$}] $D(\mathfrak{p})=k(\mathscr{G}(\mathfrak{p}))$, for any $\mathfrak{p}\in Spec(\mathfrak{A})$;
\item  [$(3)$ \namedlabel{omegprop3}{$(3)$}] $\bigcap_{\mathfrak{m}\in Max(\mathfrak{A})}D(\mathfrak{m})=\{1\}$.
\end{enumerate}
  \end{proposition}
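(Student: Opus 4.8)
The plan is to reduce each of the three assertions to an explicit set-theoretic description of the relevant $\omega$-filters and coannulets, the common engine throughout being the prime-filter separation result, Theorem \ref{prfilth}. The pivotal observation for (1) is that every coannulet is the $\omega$-filter of a principal ideal: writing $\downarrow x=\{y\in A\mid y\le x\}$ for the principal ideal of $\ell(\mathfrak{A})$ generated by $x$, I would establish
\[
x^{\perp}=\{a\in A\mid a\vee x=1\}=\omega(\downarrow x).
\]
The second equality is immediate from the definition of $\omega$ (if $a\vee y=1$ with $y\le x$ then $a\vee x=1$, and conversely take $y=x$). For the first equality, the inclusion $\{a\mid a\vee x=1\}\subseteq x^{\perp}$ is a one-line primeness argument: if $a\vee x=1$ and $x\notin\mathfrak{p}\in Spec(\mathfrak{A})$, then $1=a\vee x\in\mathfrak{p}$ forces $a\in\mathfrak{p}$. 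The reverse inclusion is the only place real work is needed: given $a\vee x\ne1$, I would apply Theorem \ref{prfilth} to the $\vee$-closed set $\{a,x,a\vee x\}$, which avoids $\{1\}$ precisely because $a\vee x\ne1$ (forcing $a\ne1$ and $x\ne1$ as well), to produce a prime filter omitting both $a$ and $x$, witnessing $a\notin x^{\perp}$.

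Granting this identity, $\gamma(\mathfrak{A})\subseteq\Omega(\mathfrak{A})$ is immediate. To see it is a sublattice, I would use that $\gamma(\mathfrak{A})$ is already a sublattice of $\Gamma(\mathfrak{A})$ by \cite{rasouli2020quasicomplemented}, whose meet is $\cap$, so $x^{\perp}\cap y^{\perp}\in\gamma(\mathfrak{A})$ and this is also the meet in $\Omega(\mathfrak{A})$; and for joins I would compute $x^{\perp}\vee^{\omega}y^{\perp}=\omega(\downarrow x\curlyvee\downarrow y)=\omega(\downarrow(x\vee y))=(x\vee y)^{\perp}$, using that the ideal join of two principal ideals of $\ell(\mathfrak{A})$ is $\downarrow(x\vee y)$. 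Thus both operations of $\Omega(\mathfrak{A})$ keep $\gamma(\mathfrak{A})$ inside itself.

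For (2), recall $D(\mathfrak{p})=\omega(\dot{\mathfrak{p}})$ with $\dot{\mathfrak{p}}=A\setminus\mathfrak{p}$ an ideal of $\ell(\mathfrak{A})$ (it is a down-set closed under $\vee$, since $\mathfrak{p}$ is a prime up-set), and $\mathscr{G}(\mathfrak{p})=\{\mathfrak{q}\in Spec(\mathfrak{A})\mid\mathfrak{q}\subseteq\mathfrak{p}\}$, so the target reads $\{a\mid a\vee x=1\text{ for some }x\notin\mathfrak{p}\}=\bigcap\{\mathfrak{q}\in Spec(\mathfrak{A})\mid\mathfrak{q}\subseteq\mathfrak{p}\}$. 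The inclusion $\subseteq$ is again a primeness argument applied to each prime $\mathfrak{q}\subseteq\mathfrak{p}$ (note $x\notin\mathfrak{p}\supseteq\mathfrak{q}$). For $\supseteq$ I would argue contrapositively: if $a\notin D(\mathfrak{p})$ then $a\ne1$ and $a\vee x\ne1$ for every $x\in\dot{\mathfrak{p}}$, so the set $\dot{\mathfrak{p}}\cup\{a\}\cup\{a\vee x\mid x\in\dot{\mathfrak{p}}\}$ is $\vee$-closed and avoids $\{1\}$; Theorem \ref{prfilth} then yields a prime $\mathfrak{q}$ disjoint from it, whence $\mathfrak{q}\subseteq\mathfrak{p}$ and $a\notin\mathfrak{q}$.

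Finally (3) follows formally from (2): substituting $D(\mathfrak{m})=\bigcap\{\mathfrak{q}\in Spec(\mathfrak{A})\mid\mathfrak{q}\subseteq\mathfrak{m}\}$ and interchanging intersections gives $\bigcap_{\mathfrak{m}\in Max(\mathfrak{A})}D(\mathfrak{m})=\bigcap\{\mathfrak{q}\in Spec(\mathfrak{A})\mid\mathfrak{q}\subseteq\mathfrak{m}\text{ for some }\mathfrak{m}\in Max(\mathfrak{A})\}$. Since every proper filter, hence every prime filter, lies under a maximal filter by Zorn's lemma, the indexing family is all of $Spec(\mathfrak{A})$, and the claim reduces to $\bigcap Spec(\mathfrak{A})=\{1\}$; this last equality is one more application of Theorem \ref{prfilth}, to the $\vee$-closed singleton $\{a\}$ for each $a\ne1$. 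I expect the main obstacle to be the reverse inclusions in (1) and (2): everything else is bookkeeping, but those two directions are exactly where one must construct the right $\vee$-closed set and separate it from $\{1\}$ by a prime filter, and verifying that the constructed set genuinely omits $1$ is the delicate point, since it is what uses the standing hypotheses $a\vee x\ne1$ and $a\notin D(\mathfrak{p})$.
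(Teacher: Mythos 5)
Your proof is correct, but there is nothing in the paper itself to compare it against: Proposition \ref{omegprop} is imported wholesale by citation from \citep{rasouli2018n}, and no argument for it appears in this paper. What you have produced is a legitimate, self-contained reconstruction using only tools the paper actually states: the prime separation theorem (Theorem \ref{prfilth}), the quoted lattice structure of $\Omega(\mathfrak{A})$, and the fact that $\gamma(\mathfrak{A})$ is a sublattice of $\Gamma(\mathfrak{A})$. Your two pivotal identities are right, and proved the right way: $x^{\perp}=\{a\in A\mid a\vee x=1\}=\omega(\downarrow x)$, obtained by separating the $\vee$-closed set $\{a,x,a\vee x\}$ from $\{1\}$, and $k(\mathscr{G}(\mathfrak{p}))\subseteq\omega(\dot{\mathfrak{p}})$, obtained by separating $\dot{\mathfrak{p}}\cup\{a\}\cup\{a\vee x\mid x\in\dot{\mathfrak{p}}\}$ from $\{1\}$; the $\vee$-closedness of the latter set rests exactly on $\dot{\mathfrak{p}}=A\setminus\mathfrak{p}$ being an ideal of $\ell(\mathfrak{A})$, which holds because $\mathfrak{p}$ is a prime up-set. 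Two dependencies deserve to be flagged explicitly rather than left implicit. First, the computation $x^{\perp}\vee^{\omega}y^{\perp}=\omega(\downarrow x\curlyvee\downarrow y)=\omega(\downarrow(x\vee y))=(x\vee y)^{\perp}$ presupposes that $\vee^{\omega}$ does not depend on the choice of representing ideals $I_{F}$; this well-definedness is part of the structure quoted from \citet[Proposition 3.7]{rasouli2018n}, and it is precisely your identity $x^{\perp}=\omega(\downarrow x)$ that licenses the choice $I_{x^{\perp}}=\downarrow x$. Second, you read $\dot{\mathfrak{p}}$ as $A\setminus\mathfrak{p}$; the paper never defines this notation, but your reading is the only one consistent with its use in Propositions \ref{dabeta} and \ref{localqoiemax}, so this is fine. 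Finally, your route to assertion (3) --- substituting (2), invoking Zorn's lemma to see that every prime lies below a maximal filter, and finishing with $k(Spec(\mathfrak{A}))=\{1\}$ --- is not only correct but genuinely needed: the cheaper observation $\bigcap_{\mathfrak{m}\in Max(\mathfrak{A})}D(\mathfrak{m})\subseteq\bigcap_{\mathfrak{m}\in Max(\mathfrak{A})}\mathfrak{m}=Rad(\mathfrak{A})$ would not suffice, since $\mathfrak{A}$ need not be semisimple (e.g.\ $\mathfrak{A}_{8}$ of Example \ref{exa8}, where $Rad(\mathfrak{A}_{8})\neq\{1\}$ yet the intersection of the $D(\mathfrak{m})$ is $\{1\}$).
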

  \begin{proposition}\label{dabeta}
    Let $\mathfrak{A}$ be residuated lattice. The following assertions hold:
    \begin{enumerate}
\item  [$(1)$ \namedlabel{dabeta1}{$(1)$}] $\beta(\mathfrak{A})\cap \mathfrak{p}\subseteq D(\mathfrak{p})$, for any $\mathfrak{p}\in Spec(\mathfrak{A})$;
\item  [$(2)$ \namedlabel{dabeta2}{$(2)$}] $\beta(\mathfrak{A})\cap Rad(\mathfrak{A})=\{1\}$;
\item  [$(3)$ \namedlabel{dabeta3}{$(3)$}] $D(\mathfrak{p})\veebar D(\mathfrak{q})=A$ if and only if $\dot{\mathfrak{p}}\curlyvee\dot{\mathfrak{q}}=A$, for any $\mathfrak{p},\mathfrak{q}\in Spec(\mathfrak{A})$.
\end{enumerate}
  \end{proposition}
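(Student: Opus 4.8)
The plan is to run all three parts through the single identity $D(\mathfrak{p})=\omega(\dot{\mathfrak{p}})$, which converts every membership question about a $D$-part into the concrete condition ``$a\vee x=1$ for some $x\notin\mathfrak{p}$''.

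For $(1)$, I would take $b\in\beta(\mathfrak{A})\cap\mathfrak{p}$ and write $b'$ for its complement, so $b\vee b'=1$ and $b\wedge b'=0$. Since $b\odot b'\leq b\wedge b'=0$ we get $b\odot b'=0$; if $b'$ were in $\mathfrak{p}$, then $b,b'\in\mathfrak{p}$ would force $0=b\odot b'\in\mathfrak{p}$, contradicting that $\mathfrak{p}$ is proper. Hence $b'\in\dot{\mathfrak{p}}$, and from $b\vee b'=1$ the definition of $\omega$ gives $b\in\omega(\dot{\mathfrak{p}})=D(\mathfrak{p})$. The only point needing care is the implication $b\wedge b'=0\Rightarrow b\odot b'=0$, which is immediate from $\odot\leq\wedge$.

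For $(2)$, I would derive it from $(1)$ together with Proposition \ref{omegprop}\ref{omegprop3}. If $b\in\beta(\mathfrak{A})\cap Rad(\mathfrak{A})$, then $b$ lies in every maximal filter $\mathfrak{m}$, so $b\in\beta(\mathfrak{A})\cap\mathfrak{m}\subseteq D(\mathfrak{m})$ by $(1)$, and therefore $b\in\bigcap_{\mathfrak{m}\in Max(\mathfrak{A})}D(\mathfrak{m})=\{1\}$, i.e. $b=1$. A direct argument is also available: a complemented element is idempotent, so if $b\neq1$ then $b'\neq0$ gives a proper filter $\mathscr{F}(b')$ contained in some maximal $\mathfrak{m}$, and $b\in Rad(\mathfrak{A})\subseteq\mathfrak{m}$ forces $0=b\odot b'\in\mathfrak{m}$, a contradiction.

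For $(3)$, the key observation is that $D(\mathfrak{p})=\omega(\dot{\mathfrak{p}})$ and $D(\mathfrak{q})=\omega(\dot{\mathfrak{q}})$ are $\omega$-filters with $I_{D(\mathfrak{p})}=\dot{\mathfrak{p}}$ and $I_{D(\mathfrak{q})}=\dot{\mathfrak{q}}$, so the join of $\omega$-filters in $\Omega(\mathfrak{A})$ is computed by $D(\mathfrak{p})\vee^{\omega}D(\mathfrak{q})=\omega(\dot{\mathfrak{p}}\curlyvee\dot{\mathfrak{q}})$. The engine is then the elementary equivalence $\omega(I)=A\iff I=A$: indeed $0\in\omega(I)$ means $0\vee x=x=1$ for some $x\in I$, i.e. $1\in I$, forcing $I=A$, and the converse is clear by taking $x=1$. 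Applying this to $I=\dot{\mathfrak{p}}\curlyvee\dot{\mathfrak{q}}$ yields the stated equivalence. The forward implication can alternatively be produced by hand through Proposition \ref{compropo}: from $f\in D(\mathfrak{p})$, $g\in D(\mathfrak{q})$ with $f\odot g=0$ one extracts $x\in\dot{\mathfrak{p}}$, $y\in\dot{\mathfrak{q}}$ with $f\vee x=1$ and $g\vee y=1$; since $f\odot g=0$ gives $f\leq\neg g$ and $g\vee y=1$ gives $\neg g\leq y$, one gets $f\leq y$, so $x\vee y\geq x\vee f=1$ and hence $\dot{\mathfrak{p}}\curlyvee\dot{\mathfrak{q}}=A$.

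The step I expect to be the main obstacle is the reverse implication of $(3)$. A naive elementwise attempt is blocked, because $\dot{\mathfrak{p}}\curlyvee\dot{\mathfrak{q}}=A$ only supplies $x\notin\mathfrak{p}$, $y\notin\mathfrak{q}$ with $x\vee y=1$, and in a general residuated lattice this does \emph{not} force any product of an element of $D(\mathfrak{p})$ with an element of $D(\mathfrak{q})$ to vanish (the law of excluded middle is unavailable, so $\neg x\vee x=1$ cannot be invoked). This is precisely why I route the reverse direction through the join formula in $\Omega(\mathfrak{A})$ and the identity $\omega(I)=A\iff I=A$, rather than through the comaximality criterion of Proposition \ref{compropo}.
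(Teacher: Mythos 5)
Parts $(1)$ and $(2)$ of your proposal are correct and coincide with the paper's own (very terse) treatment: the paper declares $(1)$ evident, and derives $(2)$ from Proposition \ref{omegprop}\ref{omegprop3} together with $(1)$, exactly as you do; your filling-in of $(1)$ via $b\odot b'\leq b\wedge b'=0$ is the intended argument. Your \emph{forward} implication in $(3)$ is also sound (the step $g\vee y=1\Rightarrow\neg g\leq y$ is justified by \ref{res1}: $\neg g=\neg g\odot(g\vee y)=(\neg g\odot g)\vee(\neg g\odot y)\leq y$), and it is already more than the paper offers, since the paper proves $(3)$ only by citing \cite[Proposition 3.5]{rasouli2018n}.

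The reverse implication of $(3)$, however, contains a genuine and fatal gap: your route through $\Omega(\mathfrak{A})$ proves that $D(\mathfrak{p})\vee^{\omega}D(\mathfrak{q})=\omega(\dot{\mathfrak{p}}\curlyvee\dot{\mathfrak{q}})=A$, but the assertion (and its later use in Theorem \ref{pmprop}, via Proposition \ref{compropo}) is about the join $\veebar$ in $\mathscr{F}(\mathfrak{A})$, not the join $\vee^{\omega}$ in $\Omega(\mathfrak{A})$. These are different operations: $\Omega(\mathfrak{A})$ is not asserted to be a sublattice of $\mathscr{F}(\mathfrak{A})$, and the only inclusion available, $D(\mathfrak{p})\veebar D(\mathfrak{q})\subseteq D(\mathfrak{p})\vee^{\omega}D(\mathfrak{q})$, points the wrong way for this direction. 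Moreover the gap cannot be repaired, because with $\veebar$ the equivalence is actually false for general primes, as one can check inside the paper's own Example \ref{exa8}: take $\mathfrak{p}=\{c,e,1\}$ and $\mathfrak{q}=\{f,1\}$ in $\mathfrak{A}_8$. Then $f\in\dot{\mathfrak{p}}$, $e\in\dot{\mathfrak{q}}$ and $e\vee f=1$, so $\dot{\mathfrak{p}}\curlyvee\dot{\mathfrak{q}}=A$. On the other hand, each of these primes is the only prime filter contained in itself, so Proposition \ref{omegprop}\ref{omegprop2} gives $D(\mathfrak{p})=\mathfrak{p}$ and $D(\mathfrak{q})=\mathfrak{q}$; by Proposition \ref{compropo}, the equality $D(\mathfrak{p})\veebar D(\mathfrak{q})=A$ would require $u\odot v=0$ for some $u\in\{c,e,1\}$ and $v\in\{f,1\}$, whereas Table \ref{taba8} gives $c\odot f=a$ and $e\odot f=d$, so in fact $D(\mathfrak{p})\veebar D(\mathfrak{q})=\{a,c,d,e,f,1\}\neq A$. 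So the obstacle you correctly sensed in the reverse direction is real, and your workaround does not overcome it: it silently replaces $\veebar$ by $\vee^{\omega}$, i.e.\ it proves a different (weaker) statement. The paper's proof-by-citation conceals the same issue; the equivalence in $(3)$ is only safe in the $\vee^{\omega}$ reading, and neither your argument nor the citation settles the $\veebar$ version that Theorem \ref{pmprop} actually needs.
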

\begin{proof}
  \item [\ref{dabeta1}:] It is evident.
  \item [\ref{dabeta2}:] It follows by Proposition \ref{omegprop}\ref{omegprop3} and \ref{dabeta1}.
  \item [\ref{dabeta3}:] It follows by \citep[Proposition 3.5]{rasouli2018n}.
\end{proof}

\begin{proposition}\label{cormaxqout}\cite[Corollary 3.9]{rasouli2019going}
Let $\mathfrak{A}$ be a residuated lattice and $F$ a filter of $\mathfrak{A}$. Then
\[Max(\mathfrak{A}/F)=\{\mathfrak{m}/F|\mathfrak{m}\in h(F)\cap Max(\mathfrak{A})\}.\]
\end{proposition}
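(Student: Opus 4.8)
The plan is to reduce the claim to the standard correspondence theorem between the filters of $\mathfrak{A}/F$ and the filters of $\mathfrak{A}$ lying above $F$, and then to exploit the fact that an order isomorphism between posets carries maximal elements to maximal elements. First I would introduce the canonical surjective homomorphism $\pi\colon \mathfrak{A}\to \mathfrak{A}/F$, noting that its kernel filter is exactly $F$, i.e. $\pi^{-1}(\{1\})=F$ (the coset of $1$ consists precisely of the elements of $F$). I would then show that the assignment $G\mapsto \pi(G)=G/F$ is an inclusion-preserving bijection from $\{G\in \mathscr{F}(\mathfrak{A})\mid F\subseteq G\}$ onto $\mathscr{F}(\mathfrak{A}/F)$, with inverse $H\mapsto \pi^{-1}(H)$.

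To set up this correspondence I would verify three routine facts: (i) for a filter $G\supseteq F$, the image $\pi(G)$ is a filter of $\mathfrak{A}/F$, since $\pi$ is a surjective homomorphism and homomorphic images of filters along surjections are filters; (ii) for a filter $H$ of $\mathfrak{A}/F$, the preimage $\pi^{-1}(H)$ is a filter of $\mathfrak{A}$ containing $F=\pi^{-1}(\{1\})$; and (iii) the two assignments are mutually inverse, where $\pi^{-1}(\pi(G))=G$ uses the hypothesis $F\subseteq G$ and $\pi(\pi^{-1}(H))=H$ uses surjectivity of $\pi$. Both maps are visibly inclusion-preserving, so together they form an order isomorphism of the two filter lattices.

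Next I would track properness through this bijection: since $\pi$ is onto, $G/F=A/F$ holds if and only if $G=A$, so $G$ is proper precisely when $G/F$ is proper. Consequently the order isomorphism restricts to an order isomorphism between the proper filters of $\mathfrak{A}$ containing $F$ and the proper filters of $\mathfrak{A}/F$, and hence sends maximal elements to maximal elements. Therefore $\mathfrak{m}/F\in Max(\mathfrak{A}/F)$ if and only if $\mathfrak{m}$ is a maximal element of the poset of proper filters of $\mathfrak{A}$ containing $F$. Finally I would identify this last condition with membership in $h(F)\cap Max(\mathfrak{A})$: any proper filter properly containing such an $\mathfrak{m}$ would again contain $F$, so maximality within the sub-poset forces genuine maximality in $\mathscr{F}(\mathfrak{A})$, and conversely a maximal filter containing $F$ is trivially maximal among proper filters containing $F$. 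Since $Max(\mathfrak{A})\subseteq Spec(\mathfrak{A})$, the maximal filters containing $F$ are exactly the elements of $h(F)\cap Max(\mathfrak{A})$, which yields the stated equality.

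The main obstacle is purely bookkeeping rather than conceptual: I must confirm that properness is preserved in both directions of the bijection and that the phrase \emph{maximal among proper filters containing $F$} is faithfully the same as \emph{maximal filter containing $F$}. Both hinge on surjectivity of $\pi$ and on $\{1\}$ being the least filter of $\mathfrak{A}/F$; once these are pinned down, the result follows formally from the order isomorphism.
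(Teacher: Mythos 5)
The paper offers no proof of this proposition at all: it is imported verbatim from \cite[Corollary 3.9]{rasouli2019going}, so there is no in-paper argument to compare yours against. Judged on its own, your proposal is correct and is the expected route: the correspondence theorem for the canonical surjection $\pi\colon\mathfrak{A}\to\mathfrak{A}/F$, restricted to proper filters, carries maximal elements to maximal elements, and maximality among proper filters containing $F$ coincides with genuine maximality in $\mathscr{F}(\mathfrak{A})$. One caution: the step you label routine, namely $\pi^{-1}(\pi(G))=G$ for $F\subseteq G$, is the only place where the actual definition of the quotient enters, and it deserves to be spelled out. If $\pi(x)=\pi(g)$ with $g\in G$, then $(g\to x)\odot(x\to g)\in F\subseteq G$, hence $g\to x\in G$ by upward closure, and then $x\geq g\odot(g\to x)\in G$ forces $x\in G$; this modus-ponens argument (using that filters are $\odot$-closed up-sets) is precisely how the hypothesis $F\subseteq G$ is used, and without it the bijectivity claim is unsupported. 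With that detail filled in, your bookkeeping on properness (via $\pi(G)=A/F$ iff $G=A$, again through $\pi^{-1}$) and the final identification with $h(F)\cap Max(\mathfrak{A})$ are all sound.
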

\begin{proposition}\label{localqoiemax}
  Let $\mathfrak{A}$ be a residuated lattice and $\mathfrak{m}$ a maximal filter of $\mathfrak{A}$. The following assertions are equivalent:
\begin{enumerate}
   \item [(1) \namedlabel{localqoiemax1}{(1)}] $\mathfrak{A}/D(\mathfrak{m})$ is local;
   \item [(2) \namedlabel{localqoiemax2}{(2)}] for any $x\notin \mathfrak{m}$, there exist an integer $n$ and $a\notin \mathfrak{m}$ such that $a\vee \neg x^{n}=1$.
 \end{enumerate}
\end{proposition}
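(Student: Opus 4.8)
The plan is to recast both conditions as statements about the maximal filters of $\mathfrak{A}$ that contain $D(\mathfrak{m})$, using the quotient description from Proposition \ref{cormaxqout}. First I would record two preliminary observations. Since $D(\mathfrak{m})=k(\mathscr{G}(\mathfrak{m}))$ by Proposition \ref{omegprop}\ref{omegprop2} and $\mathfrak{m}\in\mathscr{G}(\mathfrak{m})$, we have $D(\mathfrak{m})\subseteq\mathfrak{m}$, so $\mathfrak{m}$ is itself a maximal filter lying in $h(D(\mathfrak{m}))$. Proposition \ref{cormaxqout} then identifies the maximal filters of $\mathfrak{A}/D(\mathfrak{m})$ with the maximal filters $\mathfrak{m}'$ of $\mathfrak{A}$ satisfying $D(\mathfrak{m})\subseteq\mathfrak{m}'$; hence, by the definition of local, assertion (1) is equivalent to saying that $\mathfrak{m}$ is the \emph{only} maximal filter of $\mathfrak{A}$ containing $D(\mathfrak{m})$. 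Second, unwinding $D(\mathfrak{m})=\omega(\dot{\mathfrak{m}})$, where $\dot{\mathfrak{m}}=A\setminus\mathfrak{m}$ is the complementary prime ideal of $\ell(\mathfrak{A})$, shows that $b\in D(\mathfrak{m})$ iff $b\vee a=1$ for some $a\notin\mathfrak{m}$; taking $b=\neg x^{n}$ shows that (2) is equivalent to: for every $x\notin\mathfrak{m}$ there is an integer $n$ with $\neg x^{n}\in D(\mathfrak{m})$.

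For \ref{localqoiemax2}$\Rightarrow$\ref{localqoiemax1}, I would take any maximal filter $\mathfrak{m}'$ with $D(\mathfrak{m})\subseteq\mathfrak{m}'$ and show $\mathfrak{m}'=\mathfrak{m}$. If some $x$ lay in $\mathfrak{m}'\setminus\mathfrak{m}$, then (2) would furnish $n$ with $\neg x^{n}\in D(\mathfrak{m})\subseteq\mathfrak{m}'$, while $x^{n}\in\mathfrak{m}'$; since $x^{n}\odot\neg x^{n}=0$ by the adjunction, this forces $0\in\mathfrak{m}'$, contradicting properness. Thus $\mathfrak{m}'\subseteq\mathfrak{m}$, and maximality gives $\mathfrak{m}'=\mathfrak{m}$, so $\mathfrak{m}$ is the unique maximal filter containing $D(\mathfrak{m})$ and $\mathfrak{A}/D(\mathfrak{m})$ is local.

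For \ref{localqoiemax1}$\Rightarrow$\ref{localqoiemax2}, I would argue contrapositively: if (2) fails there is $x\notin\mathfrak{m}$ with $\neg x^{n}\notin D(\mathfrak{m})$ for every $n$. I then form $G=D(\mathfrak{m})\veebar\mathscr{F}(x)$, described by Proposition \ref{genfilprop}\ref{genfilprop1} as $\{a\in A\mid f\odot x^{n}\leq a \text{ for some } f\in D(\mathfrak{m}),\ n\in\mathbb{N}\}$. The key point is that $G$ is proper: if $0\in G$, then $f\odot x^{n}=0$ for some $f\in D(\mathfrak{m})$, whence $f\leq\neg x^{n}$ by the adjunction, and upward closure of the filter $D(\mathfrak{m})$ would give $\neg x^{n}\in D(\mathfrak{m})$, contrary to assumption. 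Extending the proper filter $G$ to a maximal filter $\mathfrak{m}'$ yields a maximal filter containing both $D(\mathfrak{m})$ and $x$; since $x\notin\mathfrak{m}$ we get $\mathfrak{m}'\neq\mathfrak{m}$, so $\mathfrak{A}/D(\mathfrak{m})$ possesses at least two maximal filters and is not local.

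I expect the properness of $G$ to be the only genuine obstacle, as the remainder is bookkeeping through the quotient correspondence and the reformulation of the two conditions; that step rests on combining the equivalence $f\odot x^{n}=0\Leftrightarrow f\leq\neg x^{n}$ with the fact that $D(\mathfrak{m})$ is an upward-closed $\omega$-filter.
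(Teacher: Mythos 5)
Your proof is correct, but it follows a genuinely different route from the paper's. The paper's argument pivots on Proposition \ref{ciucharlocal}: it identifies $\mathfrak{m}/D(\mathfrak{m})$ with $\mathfrak{in}(\mathfrak{A}/D(\mathfrak{m}))$ and translates condition \ref{localqoiemax2} into the statement that every element outside $\mathfrak{m}$ becomes nilpotent in the quotient, so that locality is read off from the nilpotence dichotomy for local residuated lattices. You never reason inside the quotient except through Proposition \ref{cormaxqout}: you convert \ref{localqoiemax1} into ``$\mathfrak{m}$ is the unique maximal filter of $\mathfrak{A}$ containing $D(\mathfrak{m})$'' and then argue entirely in $\mathfrak{A}$ --- proving \ref{localqoiemax2}$\Rightarrow$\ref{localqoiemax1} by collapsing any maximal filter over $D(\mathfrak{m})$ onto $\mathfrak{m}$ via $x^{n}\odot\neg x^{n}=0$, and \ref{localqoiemax1}$\Rightarrow$\ref{localqoiemax2} contrapositively by extending the proper filter $D(\mathfrak{m})\veebar\mathscr{F}(x)$ to a second maximal filter, where properness is exactly your adjunction computation. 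What each buys: the paper's proof is shorter given the cited machinery, whereas yours is more elementary and self-contained (it needs only Proposition \ref{cormaxqout}, Proposition \ref{genfilprop}\ref{genfilprop1}, the definition of $\omega$-filters, and Zorn's lemma), and it in effect establishes the equivalence of conditions \ref{pmprop8} and \ref{pmprop11} of Theorem \ref{pmprop} directly. One point you dismiss as bookkeeping deserves a line: identifying ``$\mathfrak{A}/D(\mathfrak{m})$ is local'' with ``exactly one maximal filter of $\mathfrak{A}$ contains $D(\mathfrak{m})$'' requires the assignment $\mathfrak{m}'\rightsquigarrow \mathfrak{m}'/D(\mathfrak{m})$ of Proposition \ref{cormaxqout} to be injective on $h(D(\mathfrak{m}))\cap Max(\mathfrak{A})$; this holds because filters containing $D(\mathfrak{m})$ are saturated with respect to the quotient map, but it should be stated rather than left implicit.
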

\begin{proof}
By Proposition \ref{cormaxqout} follows that $\mathfrak{m}/D(\mathfrak{m})$ is a maximal filter of $\mathfrak{A}/D(\mathfrak{m})$.
  \item [\ref{localqoiemax1}$\Rightarrow$\ref{localqoiemax2}:] Applying Proposition \ref{ciucharlocal}, it follows that $\mathfrak{m}/D(\mathfrak{m})=\mathfrak{in}(\mathfrak{A}/D(\mathfrak{m}))$. Consider $x\notin \mathfrak{m}$. So $\neg x^{n}\in D(\mathfrak{m})$, for some integer $n$. This holds the result.
  \item [\ref{localqoiemax2}$\Rightarrow$\ref{localqoiemax1}:] Routinely, one can show that $\mathfrak{m}/D(\mathfrak{m})=\mathfrak{in}(\mathfrak{A}/D(\mathfrak{m}))$. Thus the result holds by Proposition \ref{ciucharlocal}.
\end{proof}
\section{Gelfand residuated lattices}\label{sec3}

In this section, the notion of a Gelfand residuated lattice is investigated, and some algebraic and topological characterizations of them are extracted.


\begin{definition}
A residuated lattice $\mathfrak{A}$ is called \textit{Gelfand} provided that any prime filter of $\mathfrak{A}$ is contained in a unique maximal filter of $\mathfrak{A}$.
\end{definition}
\begin{example}\label{quanorexas}
One can see that the residuated lattice $\mathfrak{A}_6$ from Example \ref{exa6} is not Gelfand and the residuated lattice $\mathfrak{A}_8$ from Example \ref{exa8} is Gelfand.
\end{example}
\begin{example}
   The class of MTL-algebras, and so,  MV-algebras, BL-algebras, and Boolean algebras are some subclasses of Gelfand residuated lattices.
\end{example}
Let $\mathfrak{A}$ be a bounded distributive lattice. $\mathfrak{A}$ is said to be:
\begin{itemize}
  \item \textit{normal} provided that for all $x,y\in A$, $x\vee y=1$ implies there exist $u,v\in A$ such that $u\vee x=v\vee y=1$ and $u\wedge v=0$;
  \item \textit{conormal} provided that for all $x,y\in A$, $x\wedge y=0$ implies there exist $u,v\in L$ such that $u\wedge x=v\wedge y=0$ and $u\vee v=1$.
\end{itemize}
\begin{remark}
In \cite{cornish1972normal} and \cite{pawar1994characterizations}, the above nomenclatures are reversed. We have picked the version of these definitions from \citet[Definition 4.3]{simmons1980reticulated} and \citet[p. 67]{johnstone1982stone} because of the author's discussion in \cite[p. 78]{johnstone1982stone}.
\end{remark}

The following result shows that a residuated lattice is Gelfand if and only if the bounded distributive lattice of its filters is normal.
\begin{theorem}\label{nococo}
Let $\mathfrak{A}$ be a residuated lattice. The following assertions are equivalent:
\begin{enumerate}
\item  [$(1)$ \namedlabel{nococo1}{$(1)$}] The bounded distributive lattice $\mathscr{F}(\mathfrak{A})$ is normal;
\item  [$(2)$ \namedlabel{nococo2}{$(2)$}] the bounded distributive lattice $\mathscr{PF}(\mathfrak{A})$ is normal;
\item  [$(3)$ \namedlabel{nococo3}{$(3)$}] $\mathfrak{A}$ is Gelfand.
\end{enumerate}
\end{theorem}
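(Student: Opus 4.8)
The plan is to establish the cycle $\ref{nococo1}\Rightarrow\ref{nococo3}\Rightarrow\ref{nococo2}\Rightarrow\ref{nococo1}$, working throughout in the frame $\mathscr{F}(\mathfrak{A})$ under the dictionary: bottom $=\{1\}$, top $=A$, meet $=\cap$, join $=\veebar$, and ``comaximal'' (i.e. $\veebar=A$) governed by Proposition \ref{compropo}, while the behaviour of principal filters is controlled by Proposition \ref{genfilprop}. I would first record the small but repeatedly used fact that a prime filter $\mathfrak{p}$ is \emph{meet-prime} in $\mathscr{F}(\mathfrak{A})$: if $U\cap V\subseteq\mathfrak{p}$ then $U\subseteq\mathfrak{p}$ or $V\subseteq\mathfrak{p}$ (otherwise picking $u\in U\setminus\mathfrak{p}$, $v\in V\setminus\mathfrak{p}$ gives $u\vee v\in U\cap V\subseteq\mathfrak{p}$, contradicting primeness).

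For $\ref{nococo1}\Rightarrow\ref{nococo3}$: suppose a prime $\mathfrak{p}$ were contained in two distinct maximal filters $\mathfrak{m}_1,\mathfrak{m}_2$. Distinct maximal filters are comaximal, so normality of $\mathscr{F}(\mathfrak{A})$ supplies $U,V$ with $U\veebar\mathfrak{m}_1=V\veebar\mathfrak{m}_2=A$ and $U\cap V=\{1\}\subseteq\mathfrak{p}$. Meet-primeness forces, say, $U\subseteq\mathfrak{p}\subseteq\mathfrak{m}_1$, whence $A=U\veebar\mathfrak{m}_1=\mathfrak{m}_1$, contradicting properness; the case $V\subseteq\mathfrak{p}$ is symmetric. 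Hence each prime lies below a unique maximal filter, i.e. $\mathfrak{A}$ is Gelfand.

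For $\ref{nococo3}\Rightarrow\ref{nococo2}$ (the heart): given comaximal principal filters, $\mathscr{F}(a)\veebar\mathscr{F}(b)=\mathscr{F}(a\odot b)=A$ by Proposition \ref{genfilprop}\ref{genfilprop4}, so $a\odot b$ is nilpotent; replacing $a,b$ by powers (which leaves $\mathscr{F}(a),\mathscr{F}(b)$ unchanged) I may assume $a\odot b=0$. It then suffices to find $u,v$ with $u\vee v=1$ and $u\odot a,\,v\odot b$ nilpotent, since then $\mathscr{F}(u),\mathscr{F}(v)$ are the required witnesses: $\mathscr{F}(u)\veebar\mathscr{F}(a)=\mathscr{F}(u\odot a)=A$, likewise for $v$, and $\mathscr{F}(u)\cap\mathscr{F}(v)=\mathscr{F}(u\vee v)=\{1\}$. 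I would prove this element-wise claim by contradiction. The set $C=\{u\vee v\mid u\odot a\text{ and }v\odot b\text{ are nilpotent}\}$ is $\vee$-closed, because $\mathfrak{ni}(\mathfrak{A})$ is an ideal of $\ell(\mathfrak{A})$ and $\odot$ distributes over $\vee$ (\ref{res1}); moreover $a,b\in C$ (write $a=0\vee a$, $b=0\vee b$). If $1\notin C$, then $\{1\}$ does not meet $C$, so Theorem \ref{prfilth} yields a prime $\mathfrak{p}$ with $\mathfrak{p}\cap C=\emptyset$; in particular $a,b\notin\mathfrak{p}$. Using Proposition \ref{genfilprop}\ref{genfilprop1}, the filter $\mathscr{F}(\mathfrak{p},a)$ is proper: if $0\in\mathscr{F}(\mathfrak{p},a)$ then $p\odot a^{n}=0$ for some $p\in\mathfrak{p}$, whence $(p\odot a)^{n}\le p\odot a^{n}=0$ makes $p\odot a$ nilpotent and puts $p\in C\cap\mathfrak{p}$, a contradiction; likewise $\mathscr{F}(\mathfrak{p},b)$ is proper. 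Extending these to maximal filters $\mathfrak{m}_a\ni a$ and $\mathfrak{m}_b\ni b$, both containing $\mathfrak{p}$, they are distinct (else $a\odot b=0\in\mathfrak{m}_a$), contradicting that $\mathfrak{A}$ is Gelfand. Hence $1\in C$.

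For $\ref{nococo2}\Rightarrow\ref{nococo1}$: if $F\veebar G=A$, Proposition \ref{compropo} gives $f\in F,g\in G$ with $f\odot g=0$, so $\mathscr{F}(f)\veebar\mathscr{F}(g)=\mathscr{F}(0)=A$; normality of $\mathscr{PF}(\mathfrak{A})$ produces principal $U,V$ with $U\veebar\mathscr{F}(f)=V\veebar\mathscr{F}(g)=A$ and $U\cap V=\{1\}$, and since $\mathscr{F}(f)\subseteq F$, $\mathscr{F}(g)\subseteq G$ the very same $U,V$ give $U\veebar F=V\veebar G=A$, witnessing normality of $\mathscr{F}(\mathfrak{A})$. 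The main obstacle is the element-wise separation inside $\ref{nococo3}\Rightarrow\ref{nococo2}$: turning ``unique maximal above each prime'' into concrete $u,v$ with $u\vee v=1$ and $u\odot a,\,v\odot b$ nilpotent. The delicate point is verifying properness of $\mathscr{F}(\mathfrak{p},a)$ and $\mathscr{F}(\mathfrak{p},b)$ from $\mathfrak{p}\cap C=\emptyset$, since this is exactly what manufactures two genuinely different maximal filters over $\mathfrak{p}$ and springs the Gelfand contradiction.
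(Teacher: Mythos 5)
Your proof is correct, but it takes a genuinely different route from the paper's. The paper proves the cycle \ref{nococo1}$\Rightarrow$\ref{nococo2}$\Rightarrow$\ref{nococo3}$\Rightarrow$\ref{nococo1} and leans on Theorem \ref{contchar} (Contessa's characterization) as an external ingredient: its implication \ref{nococo2}$\Rightarrow$\ref{nococo3} is only sketched as following from Proposition \ref{genfilprop} and Theorem \ref{contchar}, and \ref{nococo3}$\Rightarrow$\ref{nococo1} takes its witnesses $H=\mathscr{F}(\neg f^{n})$, $K=\mathscr{F}(\neg g^{n})$ straight from the conclusion of that theorem. You instead run the cycle \ref{nococo1}$\Rightarrow$\ref{nococo3}$\Rightarrow$\ref{nococo2}$\Rightarrow$\ref{nococo1} and keep everything self-contained: your \ref{nococo3}$\Rightarrow$\ref{nococo2} step re-proves, inline, the substance of the hard direction of Theorem \ref{contchar}, replacing the paper's $\vee$-closed set $S=\{\neg a^{m}\vee\neg b^{n}\mid m,n\}$ by the nilpotency-flavoured set $C=\{u\vee v\mid u\odot a,\ v\odot b\ \text{nilpotent}\}$ and invoking Theorem \ref{prfilth} in exactly the same way (the two formulations are equivalent, since $(u\odot a)^{k}=0$ gives $u^{k}\leq\neg a^{k}$, and $u\vee v=1$ forces $u^{m}\vee v^{k}=1$ by \ref{res2}). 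Your implication \ref{nococo1}$\Rightarrow$\ref{nococo3} is also new relative to the paper: it is a purely order-theoretic argument --- prime filters are meet-prime in the frame $\mathscr{F}(\mathfrak{A})$, so normality immediately forbids two maximal filters above one prime --- whereas the paper only reaches \ref{nococo3} through the principal-filter statement \ref{nococo2}. What each approach buys: yours is self-contained and its \ref{nococo1}$\Rightarrow$\ref{nococo3} leg is a transparent lattice argument that would work in any normal frame of this kind; the paper's is shorter on the page because Theorem \ref{contchar} is proved anyway and gets reused throughout Section \ref{sec3}, and it yields explicit negation witnesses $\neg f^{n},\neg g^{n}$. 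All your individual steps check out, including the reduction to $a\odot b=0$ via $\mathscr{F}(a^{n})=\mathscr{F}(a)$, the $\vee$-closedness of $C$ (which correctly rests on $\mathfrak{ni}(\mathfrak{A})$ being an ideal of $\ell(\mathfrak{A})$ together with \ref{res1}), the properness of $\mathscr{F}(\mathfrak{p},a)$ and $\mathscr{F}(\mathfrak{p},b)$, and the transfer of witnesses from $\mathscr{PF}(\mathfrak{A})$ to $\mathscr{F}(\mathfrak{A})$ in \ref{nococo2}$\Rightarrow$\ref{nococo1}.
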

\begin{proof}
\item [\ref{nococo1}$\Rightarrow$\ref{nococo2}:] Let $x,y\in A$, such that $\mathscr{F}(x)\veebar \mathscr{F}(y)=A$. Then there exist $F,G\in \mathscr{F}(\mathfrak{A})$ such that $F\cap G=\{1\}$ and $F\veebar \mathscr{F}(x)=G\veebar \mathscr{F}(y)=A$. Thus there exist $f\in F$, $g\in G$, and integer $n$ such that $f\odot x^{n}=g\odot y^{n}=0$. This implies that $\mathscr{F}(f)\cap \mathscr{F}(g)=\{1\}$ and $\mathscr{F}(f)\veebar \mathscr{F}(x)=\mathscr{F}(g)\veebar \mathscr{F}(y)=A$.
\item [\ref{nococo2}$\Rightarrow$\ref{nococo3}:] With a little bit of effort, it proves by Proposition \ref{genfilprop} and Theorem \ref{contchar}.
\item [\ref{nococo3}$\Rightarrow$\ref{nococo1}:] Let $F$ and $G$ be two filters of $\mathfrak{A}$ such that $F\veebar G=A$. So there exist some $f\in F$ and $g\in G$ such that $f\odot g=0$. This establishes the result by taking $H=\mathscr{F}(\neg f^{n})$ and $K=\mathscr{F}(\neg g^{n})$ with integer $n$ obtained from Theorem \ref{contchar}.
\end{proof}
\citet[Theorem 4.1]{contessa1982pm} gave an element-wise criterion for a ring to be Gelfand and showed that a unitary commutative ring $\mathfrak{A}$ is Gelfand if and only if it satisfies a certain condition:
\begin{center}
$\forall m\in A$, $\exists a,b\in A$ such that $(1-am)(1-bm')=1$, $m'=1-m$.
\end{center}

Motivated by this, the following theorem generalizes this criterion, and gives an element-wise characterization for Gelfand residuated lattices.
\begin{theorem}\label{contchar}(Contessa's characterization)
Let $\mathfrak{A}$ be a residuated lattice. The following assertions are equivalent:
\begin{enumerate}
   \item [(1) \namedlabel{contchar1}{(1)}] $\mathfrak{A}$ is Gelfand;
   \item [(2) \namedlabel{contchar2}{(2)}] for any $a,b\in A$ with $a\odot b=0$, there exist some integers $m,n$ such that $\neg a^m\vee \neg b^n=1$.
 \end{enumerate}
\end{theorem}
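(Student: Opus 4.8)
The plan is to establish the equivalence by proving the two implications separately, handling $(2)\Rightarrow(1)$ directly and $(1)\Rightarrow(2)$ by contraposition. For $(2)\Rightarrow(1)$, suppose a prime filter $\mathfrak{p}$ were contained in two distinct maximal filters $\mathfrak{m}_{1}$ and $\mathfrak{m}_{2}$. Distinct maximal filters are comaximal (their join strictly contains each, so equals $A$ by maximality), whence Proposition \ref{compropo} supplies $f\in\mathfrak{m}_{1}$ and $g\in\mathfrak{m}_{2}$ with $f\odot g=0$. Applying the hypothesis to this pair yields integers $m,n$ with $\neg f^{m}\vee\neg g^{n}=1\in\mathfrak{p}$, and primeness of $\mathfrak{p}$ forces $\neg f^{m}\in\mathfrak{p}\subseteq\mathfrak{m}_{1}$ or $\neg g^{n}\in\mathfrak{p}\subseteq\mathfrak{m}_{2}$. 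In the first case $f^{m},\neg f^{m}\in\mathfrak{m}_{1}$ give $f^{m}\odot\neg f^{m}\leq 0\in\mathfrak{m}_{1}$ via the residuation inequality $x\odot(x\rightarrow 0)\leq 0$, contradicting properness; the second case is symmetric. Hence $\mathfrak{m}_{1}=\mathfrak{m}_{2}$ and $\mathfrak{A}$ is Gelfand.

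For $(1)\Rightarrow(2)$, I would argue contrapositively: assuming there exist $a,b\in A$ with $a\odot b=0$ but $\neg a^{m}\vee\neg b^{n}\neq 1$ for all $m,n$, I would exhibit a prime filter lying below two distinct maximal filters. The crucial device is the set $\mathscr{C}=\{\neg a^{m}\vee\neg b^{n}\mid m,n\in\mathbb{N}\}$. Since the power sequence $a^{n}$ is decreasing, $\neg a^{n}$ is increasing, so $\neg a^{m}\vee\neg a^{m'}=\neg a^{\max(m,m')}$; the analogous identity for $b$ then shows that the join of two members of $\mathscr{C}$ is again of the form $\neg a^{\max}\vee\neg b^{\max}$, i.e.\ $\mathscr{C}$ is $\vee$-closed, and by hypothesis $1\notin\mathscr{C}$. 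Theorem \ref{prfilth} (applied with the trivial filter) then furnishes a prime filter $\mathfrak{p}$ disjoint from $\mathscr{C}$. Because filters are upward closed under $\vee$, no $\neg a^{m}$ or $\neg b^{n}$ lies in $\mathfrak{p}$; combining this with $b\leq\neg a$ and $a\leq\neg b$ (both consequences of $a\odot b=0$) gives $a,b\notin\mathfrak{p}$, and Proposition \ref{genfilprop}\ref{genfilprop1} shows $\mathfrak{p}\veebar\mathscr{F}(a)$ and $\mathfrak{p}\veebar\mathscr{F}(b)$ are both proper. Extending them (Zorn) to maximal filters $\mathfrak{m}_{1}\ni a$ and $\mathfrak{m}_{2}\ni b$, each containing $\mathfrak{p}$, these must be distinct, since $a,b\in\mathfrak{m}_{i}$ would force $a\odot b=0\in\mathfrak{m}_{i}$, contradicting properness.

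The main obstacle is the $(1)\Rightarrow(2)$ direction, and within it the correct choice and handling of the saturated set $\mathscr{C}$: one must verify it is genuinely $\vee$-closed (which hinges on the monotonicity of the power sequences, collapsing each join of same-base terms to a single index) and then ensure that the extensions $\mathfrak{p}\veebar\mathscr{F}(a)$ and $\mathfrak{p}\veebar\mathscr{F}(b)$ remain proper so that Zorn's lemma yields two maximal filters, whose distinctness is exactly the payoff of $a\odot b=0$. By contrast, $(2)\Rightarrow(1)$ reduces to a short computation once comaximality of distinct maximal filters, Proposition \ref{compropo}, and the inequality $x\odot(x\rightarrow 0)\leq 0$ are invoked.
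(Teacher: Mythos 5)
Your proposal is correct and follows essentially the same route as the paper's proof: for $(2)\Rightarrow(1)$ the comaximality of distinct maximal filters together with Proposition \ref{compropo} and primeness, and for $(1)\Rightarrow(2)$ the $\vee$-closed set of elements $\neg a^{m}\vee\neg b^{n}$, Theorem \ref{prfilth}, and the extension of the proper filters $\mathfrak{p}\veebar\mathscr{F}(a)$ and $\mathfrak{p}\veebar\mathscr{F}(b)$ to two necessarily distinct maximal filters. The only stylistic differences are that the paper argues by reductio rather than contraposition, and that it derives properness directly from the residuation step $p\odot a^{n}=0\Rightarrow p\leq\neg a^{n}\in\mathfrak{p}$, contradicting $\mathfrak{p}\cap\mathscr{C}=\emptyset$ --- your intermediate observation that $a,b\notin\mathfrak{p}$ is superfluous for this, since what actually does the work is that no $\neg a^{n}$ or $\neg b^{n}$ lies in $\mathfrak{p}$.
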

\begin{proof}
\item [\ref{contchar1}$\Rightarrow$\ref{contchar2}:] Let $a,b\in A$ such that $a\odot b=0$. It is obvious that $S=\{\neg a^{m}\vee \neg b^{n}\mid m,n\in \mathds{N}\cup \{0\}\}$ is a $\vee$-closed subset of $\mathfrak{A}$. Assume by absurdum that  $1\notin S$. By Theorem \ref{prfilth}, there exists $\mathfrak{p}\in Spec(\mathfrak{A})$ which is maximal with respect to the property of not meeting $S$. The filter $\mathscr{F}(\mathfrak{p},a)$ is not all $A$ because the equality $p\odot a^{m}=0$, $p\in \mathfrak{p}$, $m\in \mathds{N}$, would implies $p\leq \neg a^{m}\in \mathfrak{p}\cap S$ which is impossible. So there exists a maximal filter $\mathfrak{m}_{a}$ which contains $\mathscr{F}(\mathfrak{p},a)$. Analogously, there exists a maximal filter  $\mathfrak{m}_{b}$ which contains $\mathscr{F}(\mathfrak{p},b)$. Notice that $\mathfrak{m}_{a}\neq \mathfrak{m}_{b}$, otherwise $0=a\odot b\in \mathfrak{m}_{a}=\mathfrak{m}_{b}$ which is impossible. In conclusion, $\mathfrak{p}$ is contained in two distinct maximal filters which is a contradiction. So $1\in S$ and this holds the result.
\item [\ref{contchar2}$\Rightarrow$\ref{contchar1}:] Let $\mathfrak{p}$ be a prime filter containing in two distinct maximal filters $\mathfrak{m}$ and $\mathfrak{n}$. So there exists $a\in \mathfrak{m}$ and $b\in \mathfrak{n}$ such that $a\odot b=0$. Thus we have $\neg a^m\vee \neg b^n=1\in \mathfrak{p}$, for some integers $m,n$. It follows then either $\neg a^m\in P$ or $\neg b^n\in P$, and then either $\mathfrak{m}=A$ or $\mathfrak{n}=A$; a contradiction.
\end{proof}
\begin{corollary}\label{dirgel}
  A finite direct product of Gelfand residuated lattices is Gelfand.
\end{corollary}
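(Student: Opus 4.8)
The plan is to deduce this immediately from Contessa's characterization (Theorem \ref{contchar}), which reduces ``Gelfand'' to an element-wise condition that behaves well under componentwise operations. It suffices to treat a finite product $\mathfrak{A}=\prod_{i=1}^{k}\mathfrak{A}_{i}$ of Gelfand residuated lattices directly (no induction is really needed, though one could equally prove the two-factor case and induct). Recall that all operations on the direct product are defined coordinatewise, so an element is a tuple $a=(a_{i})_{i}$, the bottom element is $0=(0_{i})_{i}$, and the top is $1=(1_{i})_{i}$.

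First I would take arbitrary $a,b\in A$ with $a\odot b=0$. Reading this coordinatewise gives $a_{i}\odot b_{i}=0_{i}$ in $\mathfrak{A}_{i}$ for every $i$. Since each factor $\mathfrak{A}_{i}$ is Gelfand, Theorem \ref{contchar} supplies, for each $i$, integers $m_{i},n_{i}$ with $\neg a_{i}^{m_{i}}\vee \neg b_{i}^{n_{i}}=1_{i}$.

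Next I would manufacture a single pair of exponents that works in all coordinates at once by setting $m=\max_{i} m_{i}$ and $n=\max_{i} n_{i}$. The load-bearing observation is a monotonicity fact: for any element $x$ of a residuated lattice the powers decrease, $x^{p+1}=x^{p}\odot x\leq x^{p}$ (because $x\leq 1$), and $\neg$ reverses order, so $p\mapsto \neg x^{p}$ is increasing. Consequently $\neg a_{i}^{m}\geq \neg a_{i}^{m_{i}}$ and $\neg b_{i}^{n}\geq \neg b_{i}^{n_{i}}$, whence $\neg a_{i}^{m}\vee \neg b_{i}^{n}\geq \neg a_{i}^{m_{i}}\vee \neg b_{i}^{n_{i}}=1_{i}$ and therefore equals $1_{i}$. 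Reassembling the coordinates yields $\neg a^{m}\vee \neg b^{n}=(1_{i})_{i}=1$, and Theorem \ref{contchar} then concludes that $\mathfrak{A}$ is Gelfand.

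The only real obstacle is the passage from the factor-dependent exponents $m_{i},n_{i}$ to the uniform pair $(m,n)$; this is precisely where finiteness of the product is used, since it guarantees the maxima exist, and where the monotonicity of $p\mapsto \neg x^{p}$ does the essential work. Everything else is a routine unwinding of the componentwise product structure, and it is worth noting that the argument genuinely needs finiteness: an infinite product could defeat the choice of a common bound for the exponents.
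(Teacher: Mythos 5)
Your proof is correct and follows essentially the same route as the paper: apply Contessa's characterization (Theorem \ref{contchar}) in each factor, take the maximum of the exponents, and reassemble coordinatewise. In fact your write-up is slightly more careful than the paper's, since you explicitly justify the step the paper leaves implicit (that $p\mapsto \neg x^{p}$ is increasing, so passing to the common exponents $m,n$ preserves the identity), and you use separate exponents $m,n$ where the paper's own proof conflates them.
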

\begin{proof}
  Let $\{\mathfrak{A}_{i}\}_{i=1}^{s}$ be a finite family of Gelfand residuated lattices and set $\mathfrak{A}=\prod_{i=1}^{s}\mathfrak{A}_{i}$. Let $a,b\in A$ such that $a\odot b=0$. Then $a_{i}\odot b_{i}=0$ in $\mathfrak{A}_{i}$ implies the existence of integers $m_{i}$ such that $\neg a_{i}^{m_{i}}\vee \neg b_{i}^{m_{i}}=1$, for all $i\in \{1,\cdots,s\}$. By considering $m=\max\{m_1,\cdots,m_s\}$, we have $\neg a^{m}\vee \neg b^{n}=1$ and this holds the result by Proposition \ref{contchar}.
\end{proof}

The following theorem gives some algebraic characterizations by means of maximal filters for a Gelfand residuated lattice.
\begin{theorem}\label{pmprop}
Let $\mathfrak{A}$ be a residuated lattice.  The following assertions are equivalent:
 \begin{enumerate}
   \item [(1) \namedlabel{pmprop1}{(1)}] $\mathfrak{A}$ is Gelfand;
   \item [(2) \namedlabel{pmprop2}{(2)}] for any distinct maximal filters $\mathfrak{m}$ and $\mathfrak{n}$ of $\mathfrak{A}$, there exist $a\notin \mathfrak{m}$ and $b\notin \mathfrak{n}$ such that $a\vee b=1$;
   \item [(3) \namedlabel{pmprop3}{(3)}] for any distinct maximal filters $\mathfrak{m}$ and $\mathfrak{n}$ of $\mathfrak{A}$, $D(\mathfrak{m})$ and $D(\mathfrak{n})$ are comaximal;
   \item [(4) \namedlabel{pmprop4}{(4)}] for any distinct maximal filters $\mathfrak{m}$ and $\mathfrak{n}$ of $\mathfrak{A}$, there exists $a\in A$ such that $a\in D(\mathfrak{m})$ and $\neg a\in D(\mathfrak{n})$;
   \item [(5) \namedlabel{pmprop5}{(5)}] for any maximal filter $\mathfrak{m}$ and any prime filter $\mathfrak{p}$ of $\mathfrak{A}$, $D(\mathfrak{p})\subseteq \mathfrak{m}$ implies $\mathfrak{p}\subseteq \mathfrak{m}$;
   \item [(6) \namedlabel{pmprop6}{(6)}] for any maximal filter $\mathfrak{m}$ and any proper filter $F$ of $\mathfrak{A}$, $D(\mathfrak{m})\subseteq F$ implies $F\subseteq \mathfrak{m}$;
   \item [(7) \namedlabel{pmprop7}{(7)}] for any maximal filter $\mathfrak{m}$ and any proper filter $F$ of $\mathfrak{A}$, $F\veebar \mathfrak{m}=A$ implies $F\veebar D(\mathfrak{m})=A$;
   \item [(8) \namedlabel{pmprop8}{(8)}] for any maximal filter $\mathfrak{m}$ of $\mathfrak{A}$, $\mathfrak{m}$ is the unique maximal filter containing $D(\mathfrak{m})$;
   \item [(9) \namedlabel{pmprop9}{(9)}] for any maximal filter $\mathfrak{m}$, $\mathscr{G}(\mathfrak{m})=h(D(\mathfrak{m}))$;
   \item [(10) \namedlabel{pmprop10}{(10)}] for any maximal filter $\mathfrak{m}$ of $\mathfrak{A}$, $\mathfrak{A}/D(\mathfrak{m})$ is local;
   \item [(11) \namedlabel{pmprop11}{(11)}] for any maximal filter $\mathfrak{m}$ of $\mathfrak{A}$ and any $x\notin \mathfrak{m}$, there exist an integer $n$ and $a\notin \mathfrak{m}$ such that $a\vee \neg x^{n}=1$.
 \end{enumerate}
\end{theorem}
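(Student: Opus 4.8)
The plan is to prove the eleven conditions equivalent through a web of implications organized around two anchors: the defining Gelfand property \ref{pmprop1} and the ``unique maximal filter above $D(\mathfrak{m})$'' statement \ref{pmprop8}, which will serve as a hub. First I would settle the separation group \ref{pmprop1}--\ref{pmprop4}. For \ref{pmprop1}$\Rightarrow$\ref{pmprop2}, given distinct maximal filters $\mathfrak{m}\neq\mathfrak{n}$, I note they are comaximal, so by Proposition \ref{compropo} there are $f\in\mathfrak{m}$, $g\in\mathfrak{n}$ with $f\odot g=0$; feeding this into Contessa's characterization (Theorem \ref{contchar}) yields integers $m,n$ with $\neg f^{m}\vee\neg g^{n}=1$, and since $\mathfrak{m},\mathfrak{n}$ are proper one checks $\neg f^{m}\notin\mathfrak{m}$ and $\neg g^{n}\notin\mathfrak{n}$, giving \ref{pmprop2}. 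The converse \ref{pmprop2}$\Rightarrow$\ref{pmprop1} is immediate from primeness: a prime $\mathfrak{p}$ inside two distinct maximal filters would contain $a\vee b=1$ yet miss both $a$ and $b$. The equivalence \ref{pmprop2}$\Leftrightarrow$\ref{pmprop3} is exactly the content of Proposition \ref{dabeta}\ref{dabeta3}, because $\dot{\mathfrak{m}}\curlyvee\dot{\mathfrak{n}}=A$ says precisely that some $x\notin\mathfrak{m}$, $y\notin\mathfrak{n}$ satisfy $x\vee y=1$; and \ref{pmprop3}$\Leftrightarrow$\ref{pmprop4} is Proposition \ref{compropo} applied to the proper filters $D(\mathfrak{m}),D(\mathfrak{n})$.

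The second ingredient is the elementary monotonicity of the $D$-operator: from $D(\mathfrak{p})=\omega(\dot{\mathfrak{p}})=k(\mathscr{G}(\mathfrak{p}))$ (Proposition \ref{omegprop}\ref{omegprop2}) one reads off $D(\mathfrak{p})\subseteq\mathfrak{p}$ and $\mathfrak{p}\subseteq\mathfrak{q}\Rightarrow D(\mathfrak{q})\subseteq D(\mathfrak{p})$. With these I anchor everything at \ref{pmprop8}. For \ref{pmprop3}$\Rightarrow$\ref{pmprop8}, a second maximal $\mathfrak{n}\supseteq D(\mathfrak{m})$ would force $D(\mathfrak{m})\veebar D(\mathfrak{n})\subseteq\mathfrak{n}\neq A$, contradicting \ref{pmprop3}; for \ref{pmprop8}$\Rightarrow$\ref{pmprop1}, if a prime sits in $\mathfrak{m}$ and $\mathfrak{n}$, monotonicity gives $D(\mathfrak{m})\subseteq\mathfrak{n}$, whence $\mathfrak{n}=\mathfrak{m}$ by \ref{pmprop8}. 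The filter conditions \ref{pmprop6},\ref{pmprop7} orbit the hub: \ref{pmprop8}$\Leftrightarrow$\ref{pmprop6} by embedding any proper $F\supseteq D(\mathfrak{m})$ in a maximal filter and conversely restricting \ref{pmprop6} to maximal $F$; and \ref{pmprop6}$\Leftrightarrow$\ref{pmprop7} by observing that $F\veebar\mathfrak{m}=A$ means $F\not\subseteq\mathfrak{m}$ and playing the proper filter $F\veebar D(\mathfrak{m})$ off against \ref{pmprop6}. Condition \ref{pmprop5} joins by a small trick: \ref{pmprop1}$\Rightarrow$\ref{pmprop5} uses the unique maximal $\mathfrak{m}_{0}\supseteq\mathfrak{p}$ together with \ref{pmprop8}, while \ref{pmprop5}$\Rightarrow$\ref{pmprop8} is obtained by applying \ref{pmprop5} with the prime taken to be $\mathfrak{m}$ itself (a maximal filter is prime). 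Finally \ref{pmprop8}$\Leftrightarrow$\ref{pmprop9} because $\mathscr{G}(\mathfrak{m})\subseteq h(D(\mathfrak{m}))$ always holds by monotonicity, and the reverse inclusion is just the assertion that every prime above $D(\mathfrak{m})$ lies below the unique maximal $\mathfrak{m}$.

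The last two conditions are handled by the cited quotient machinery: \ref{pmprop8}$\Leftrightarrow$\ref{pmprop10} follows from Proposition \ref{cormaxqout} with $F=D(\mathfrak{m})$, which identifies the maximal filters of $\mathfrak{A}/D(\mathfrak{m})$ with those maximal filters of $\mathfrak{A}$ containing $D(\mathfrak{m})$, so locality of the quotient is exactly the uniqueness in \ref{pmprop8}; and \ref{pmprop10}$\Leftrightarrow$\ref{pmprop11} is precisely Proposition \ref{localqoiemax}.

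I expect the only genuinely substantive step to be the construction in \ref{pmprop1}$\Rightarrow$\ref{pmprop2}, where Contessa's characterization must be invoked and the produced elements verified to lie outside the respective maximal filters; every other arrow is a short deduction once $D(\mathfrak{p})\subseteq\mathfrak{p}$, the antitonicity of $D$, and the translation \ref{pmprop2}$\Leftrightarrow$\ref{pmprop3} from Proposition \ref{dabeta}\ref{dabeta3} are in place. The main bookkeeping risk is circularity across this dense web, so I would fix a spanning set of implications --- namely \ref{pmprop1}$\Rightarrow$\ref{pmprop2}$\Rightarrow$\ref{pmprop3}$\Leftrightarrow$\ref{pmprop4}, then \ref{pmprop3}$\Rightarrow$\ref{pmprop8}$\Rightarrow$\ref{pmprop1}, with the satellite equivalences of \ref{pmprop5}--\ref{pmprop7} and \ref{pmprop9}--\ref{pmprop11} hung off \ref{pmprop8} --- and check that each arrow relies only on facts established earlier along this order.
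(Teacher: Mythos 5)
Your proposal is correct --- every arrow you list goes through, and I checked the delicate points: the non-membership verification $\neg f^{m}\notin\mathfrak{m}$ in \ref{pmprop1}$\Rightarrow$\ref{pmprop2} (it holds because $f^{m}\odot\neg f^{m}=0$ would force $0\in\mathfrak{m}$), the antitonicity $\mathfrak{p}\subseteq\mathfrak{q}\Rightarrow D(\mathfrak{q})\subseteq D(\mathfrak{p})$ and $D(\mathfrak{p})\subseteq\mathfrak{p}$ read off from Proposition \ref{omegprop}\ref{omegprop2}, and the absence of circularity in your hub-and-spoke graph. The mathematical ingredients are exactly those of the paper: Contessa's characterization (Theorem \ref{contchar}) for \ref{pmprop1}$\Rightarrow$\ref{pmprop2}, Proposition \ref{dabeta}\ref{dabeta3} for the passage to \ref{pmprop3}, Proposition \ref{compropo} for \ref{pmprop4}, and Propositions \ref{cormaxqout} and \ref{localqoiemax} for \ref{pmprop10} and \ref{pmprop11}. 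What differs is the decomposition. The paper proves a single Hamiltonian cycle \ref{pmprop1}$\Rightarrow$\ref{pmprop2}$\Rightarrow\cdots\Rightarrow$\ref{pmprop11}$\Rightarrow$\ref{pmprop1}, which is economical (eleven arrows) but forces some implications into a fixed order: in particular it needs a separate direct argument for \ref{pmprop11}$\Rightarrow$\ref{pmprop1} (a primality argument with $x\in\mathfrak{n}\setminus\mathfrak{m}$ and $a\vee\neg x^{n}\in\mathfrak{p}$), and its \ref{pmprop4}$\Rightarrow$\ref{pmprop5}$\Rightarrow$\ref{pmprop6} chain repeats the "two $D$-parts inside one maximal filter" trick. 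You instead close a short cycle \ref{pmprop1}$\Rightarrow$\ref{pmprop2}$\Rightarrow$\ref{pmprop3}$\Rightarrow$\ref{pmprop8}$\Rightarrow$\ref{pmprop1} and hang \ref{pmprop5}--\ref{pmprop7} and \ref{pmprop9}--\ref{pmprop11} off the hub \ref{pmprop8}; this costs more arrows but each one is a two-line deduction, the return arrow \ref{pmprop8}$\Rightarrow$\ref{pmprop1} is cleaner than the paper's three-step detour through \ref{pmprop9}--\ref{pmprop11}, and condition \ref{pmprop11} never needs a bespoke argument since Proposition \ref{localqoiemax} is already an equivalence. Your \ref{pmprop5}$\Rightarrow$\ref{pmprop8} (take the prime to be $\mathfrak{m}$ itself) is essentially the paper's \ref{pmprop5}$\Rightarrow$\ref{pmprop6}, so the two proofs share even their small tricks; the difference is purely in the architecture, and both are sound.
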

\begin{proof}
  \item [\ref{pmprop1}$\Rightarrow$\ref{pmprop2}:] It is an immediate consequence of Theorem \ref{contchar}.
  \item [\ref{pmprop2}$\Rightarrow$\ref{pmprop3}:] It follows by Proposition \ref{dabeta}\ref{dabeta3}.
  \item [\ref{pmprop3}$\Rightarrow$\ref{pmprop4}:] It follows by Proposition \ref{compropo}.
  \item [\ref{pmprop4}$\Rightarrow$\ref{pmprop5}:] Let $\mathfrak{p}$ be a prime filter of $\mathfrak{A}$ and $\mathfrak{m}$ a maximal filter containing $D(\mathfrak{p})$. By absurdum, assume that $\mathfrak{p}$ contained in a maximal filter $\mathfrak{n}$ distinct from $\mathfrak{m}$. This implies that $D(\mathfrak{m})$ and $D(\mathfrak{n})$ contained in $\mathfrak{m}$; a contradiction.
  \item [\ref{pmprop5}$\Rightarrow$\ref{pmprop6}:] Let $F$ be a proper filter of $\mathfrak{A}$ containing $D(\mathfrak{m})$, for some maximal filter $\mathfrak{m}$ of $\mathfrak{A}$. By absurdum, assume that $F$ contained in a maximal filter $\mathfrak{n}$ distinct from $\mathfrak{m}$. This implies that $\mathfrak{m}\subseteq \mathfrak{n}$; a contradiction.
  \item [\ref{pmprop6}$\Rightarrow$\ref{pmprop7}:] Let $\mathfrak{A}$ be a maximal filter of $\mathfrak{A}$ and $F$ be a proper filter of $\mathfrak{A}$ such that $F\veebar \mathfrak{m}=A$. Let $F\veebar D(\mathfrak{m})$ is proper. This states that $F$ contained in $\mathfrak{m}$; a contradiction.
  \item [\ref{pmprop7}$\Rightarrow$\ref{pmprop8}:] for any distinct maximal filters $\mathfrak{m}$ and $\mathfrak{n}$ of $\mathfrak{A}$, we have $D(\mathfrak{m})\veebar \mathfrak{n}=A$. This win us.
  \item [\ref{pmprop8}$\Rightarrow$\ref{pmprop9}:] It is routinely proved.
  \item [\ref{pmprop9}$\Rightarrow$\ref{pmprop10}:] Let $\mathfrak{m}$ be a maximal filter of $\mathfrak{A}$. Suppose that $\mathfrak{M}$ is a maximal filter of $\mathfrak{A}/D(\mathfrak{m})$. Using Proposition \ref{cormaxqout}, it follows that $\mathfrak{M}=\mathfrak{n}/D(\mathfrak{m})$, for some $\mathfrak{n}\in h(D(\mathfrak{m}))\cap Max(\mathfrak{A})=\{\mathfrak{m}\}$. This holds the result.
  \item [\ref{pmprop10}$\Rightarrow$\ref{pmprop11}:] It is evident by Proposition \ref{localqoiemax}.
  \item [\ref{pmprop11}$\Rightarrow$\ref{pmprop1}:] Let $\mathfrak{p}$ be a prime filter of $\mathfrak{A}$ which is contained in two distinct maximal filters $\mathfrak{m}$ and $\mathfrak{n}$ of $\mathfrak{A}$. Consider $x\in \mathfrak{n}\setminus \mathfrak{m}$. Hence, $a\vee \neg x^{n}\in \mathfrak{p}$, for some integer $n$ and $a\notin \mathfrak{m}$; a contradiction.
\end{proof}

In the following, some topological characterizations for Gelfand residuated lattices are given.
\begin{theorem}\label{p2mprop}
Let $\mathfrak{A}$ be a residuated lattice.  The following assertions are equivalent:
 \begin{enumerate}
   \item [(1) \namedlabel{p2mprop1}{(1)}] $\mathfrak{A}$ is Gelfand;
   \item [(2) \namedlabel{p2mprop2}{(2)}] any two distinct maximal filters can be separated in $Spec_{h}(\mathfrak{A})$;
   \item [(3) \namedlabel{p2mprop3}{(3)}] for any maximal filter $\mathfrak{m}$, $\mathscr{G}(\mathfrak{m})$ is closed in $Spec_{h}(\mathfrak{A})$.
 \end{enumerate}
\end{theorem}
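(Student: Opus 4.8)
The plan is to recognize that conditions \ref{p2mprop2} and \ref{p2mprop3} are merely topological reformulations of the purely algebraic characterizations already available in Theorem \ref{pmprop}, namely items \ref{pmprop2} and \ref{pmprop9} respectively. So rather than grinding out a fresh cycle, I would prove the two biconditionals \ref{p2mprop1}$\Leftrightarrow$\ref{p2mprop3} and \ref{p2mprop1}$\Leftrightarrow$\ref{p2mprop2} by translating between the algebraic and topological languages, leaning on Theorem \ref{pmprop} to do the real work.

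For \ref{p2mprop1}$\Leftrightarrow$\ref{p2mprop3}, the key computation is $cl_{h}(\mathscr{G}(\mathfrak{m}))=hk(\mathscr{G}(\mathfrak{m}))$ by Lemma \ref{retractlemma}\ref{retractlemma1}, combined with $k(\mathscr{G}(\mathfrak{m}))=D(\mathfrak{m})$ from Proposition \ref{omegprop}\ref{omegprop2}, which yields $cl_{h}(\mathscr{G}(\mathfrak{m}))=h(D(\mathfrak{m}))$. Since $D(\mathfrak{m})=\bigcap \mathscr{G}(\mathfrak{m})$ is contained in every prime lying below $\mathfrak{m}$, the inclusion $\mathscr{G}(\mathfrak{m})\subseteq h(D(\mathfrak{m}))$ always holds; hence $\mathscr{G}(\mathfrak{m})$ is $h$-closed if and only if $\mathscr{G}(\mathfrak{m})=h(D(\mathfrak{m}))$. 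As this equality, ranging over all maximal filters, is exactly Theorem \ref{pmprop}\ref{pmprop9}, which is equivalent to $\mathfrak{A}$ being Gelfand, the biconditional follows at once.

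For \ref{p2mprop1}$\Leftrightarrow$\ref{p2mprop2}, where I read ``separated'' as separated by disjoint open sets, the two tools are the identity $d(a)\cap d(b)=d(a\vee b)$ and the fact that $d(c)=\varnothing$ iff $c=1$. The identity holds because for a prime $P$ one has $a\vee b\in P$ iff $a\in P$ or $b\in P$ (forward by primality, backward by upward closure), so $h(a\vee b)=h(a)\cup h(b)$. The fact $\bigcap Spec(\mathfrak{A})=\{1\}$ comes from Theorem \ref{prfilth}: for $c\neq 1$, the singleton $\{c\}$ is $\vee$-closed and misses $\{1\}$, so some prime avoids $c$. Now for \ref{p2mprop1}$\Rightarrow$\ref{p2mprop2}, Theorem \ref{pmprop}\ref{pmprop2} supplies $a\notin \mathfrak{m}$ and $b\notin \mathfrak{n}$ with $a\vee b=1$, whence $d(a)$ and $d(b)$ are disjoint open neighborhoods of $\mathfrak{m}$ and $\mathfrak{n}$. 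For \ref{p2mprop2}$\Rightarrow$\ref{p2mprop1}, given disjoint open neighborhoods I refine to basic opens $d(a)\ni \mathfrak{m}$ and $d(b)\ni \mathfrak{n}$; their disjointness forces $d(a\vee b)=\varnothing$, hence $a\vee b=1$ with $a\notin \mathfrak{m}$ and $b\notin \mathfrak{n}$, which is precisely Theorem \ref{pmprop}\ref{pmprop2}.

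The only place demanding care is this last translation: one must refine arbitrary disjoint opens to basic opens $d(a),d(b)$ and then invoke $d(c)=\varnothing\Leftrightarrow c=1$, which is the single nonformal ingredient and where Theorem \ref{prfilth} is genuinely used. Once that lemma is in hand, the rest is bookkeeping against the equivalences of Theorem \ref{pmprop}, so I expect no further obstacle.
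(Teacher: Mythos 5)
Your proposal is correct, and it reorganizes the argument in a genuinely different way from the paper. The paper proves the cycle \ref{p2mprop1}$\Rightarrow$\ref{p2mprop2}$\Rightarrow$\ref{p2mprop3}$\Rightarrow$\ref{p2mprop1}: its \ref{p2mprop1}$\Rightarrow$\ref{p2mprop2} is identical to yours, but its \ref{p2mprop2}$\Rightarrow$\ref{p2mprop3} is a direct topological argument requiring the patch-topology machinery --- Lemma \ref{retractlemma}\ref{retractlemma2} to get patch-closedness of $\mathscr{G}(\mathfrak{m})$, an ad hoc argument for $\mathscr{S}$-stability using the separation hypothesis, and Theorem \ref{closefalzai} to conclude closedness --- and its \ref{p2mprop3}$\Rightarrow$\ref{p2mprop1} (second proof) is exactly your closure computation $\mathscr{G}(\mathfrak{m})=cl_{h}(\mathscr{G}(\mathfrak{m}))=hk(\mathscr{G}(\mathfrak{m}))=h(D(\mathfrak{m}))$ feeding into Theorem \ref{pmprop}\ref{pmprop9}. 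You instead treat \ref{p2mprop1} as a hub and prove the two biconditionals \ref{p2mprop1}$\Leftrightarrow$\ref{p2mprop2} and \ref{p2mprop1}$\Leftrightarrow$\ref{p2mprop3} separately: your \ref{p2mprop1}$\Leftrightarrow$\ref{p2mprop3} upgrades the paper's one-directional closure computation to an equivalence (using that $\mathscr{G}(\mathfrak{m})\subseteq h(D(\mathfrak{m}))$ always holds), and your \ref{p2mprop2}$\Rightarrow$\ref{p2mprop1} is an elementary argument the paper never gives: refine disjoint opens to basic opens, use $d(a)\cap d(b)=d(a\vee b)$ and $d(c)=\emptyset\Leftrightarrow c=1$ (via Theorem \ref{prfilth}) to land on Theorem \ref{pmprop}\ref{pmprop2}. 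What your route buys is economy of machinery: the patch topology, Theorem \ref{closefalzai}, and Lemma \ref{retractlemma}\ref{retractlemma2} disappear entirely, leaving only Lemma \ref{retractlemma}\ref{retractlemma1} and Proposition \ref{omegprop}\ref{omegprop2} as topological inputs. What the paper's cycle buys is a direct link \ref{p2mprop2}$\Rightarrow$\ref{p2mprop3} between the two topological conditions that is not mediated by the algebraic characterizations of Theorem \ref{pmprop}, at the cost of heavier tools. Both proofs are sound; yours leans harder on Theorem \ref{pmprop} doing the real work, which is a legitimate and arguably cleaner division of labor.
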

\begin{proof}
\item [\ref{p2mprop1}$\Rightarrow$\ref{p2mprop2}:] Let $\mathfrak{m}$ and $\mathfrak{n}$ be two distinct maximal filters of $\mathfrak{A}$. So there exist $a\notin \mathfrak{m}$ and $b\notin \mathfrak{n}$ such that $a\vee b=1$. This verifies that $d(a)$ and $d(b)$ are two disjoint neighbourhoods of $\mathfrak{m}$ and $\mathfrak{n}$ in $Spec_{h}(\mathfrak{A})$, respectively.
\item [\ref{p2mprop2}$\Rightarrow$\ref{p2mprop3}:] Let $\mathfrak{m}$ be a maximal filter of $\mathfrak{A}$. Applying Proposition \ref{retractlemma}\ref{retractlemma2}, $\mathscr{G}(\mathfrak{m})$ is closed under the patch topology. Assume that $\mathfrak{p}\in \mathscr{S}\mathscr{G}(\mathfrak{m})$. So $\mathfrak{q}\subseteq \mathfrak{p}$, for some $\mathfrak{q}\in \mathscr{G}(\mathfrak{m})$. By absurdum assume that $\mathfrak{p}\notin \mathscr{G}\mathfrak{m}$. Let $\mathfrak{p}\subseteq \mathfrak{n}$, for some maximal filter $\mathfrak{n}$ of $\mathfrak{A}$. This implies that $\mathfrak{m},\mathfrak{n}\in \mathscr{S}(\mathfrak{q})$; a contradiction. Hence, the result holds by Theorem \ref{closefalzai}.
\item [\ref{p2mprop3}$\Rightarrow$\ref{p2mprop1}:] \textit{\textbf{First proof.}} Let $\mathfrak{A}$ be not Gelfand. By Theorem \ref{pmprop} there exist distinct maximal filters $\mathfrak{m}$ and $\mathfrak{n}$ of $\mathfrak{A}$ such that $D(\mathfrak{m})\subseteq \mathfrak{n}$. Suppose that $d(a)$ is a basic neighbourhood of $\mathfrak{n}$. So $a\notin \mathfrak{p}$, for some $\mathfrak{p}\in \mathscr{G}(\mathfrak{m})$. This means that $d(a)\cap \mathscr{G}(\mathfrak{m})\neq\emptyset$. Hence, $\mathfrak{n}\in cl_{h}(\mathscr{G}(\mathfrak{m}))\setminus \mathscr{G}(\mathfrak{m})$; a contradiction.\\
    \textit{\textbf{Second proof.}} Let $\mathfrak{m}$ be a maximal filter of $\mathfrak{A}$. Using Proposition \ref{omegprop}\ref{omegprop2}, it follows that $h(D(\mathfrak{m}))=hk(\mathscr{G}(\mathfrak{m}))=cl_{h}(\mathscr{G}(\mathfrak{m}))=\mathscr{G}(\mathfrak{m})$. So the result holds due to Theorem \ref{pmprop}.
\end{proof}
\begin{remark}\label{gelmaxhau}
By Proposition \ref{hulkerinstr}\ref{hulkerinstr2} and Theorem \ref{p2mprop}, if $\mathfrak{A}$ is a Gelfand residuated lattice, $Max_{h}(\mathfrak{A})$ is a $T_{4}$ space.
\end{remark}

Recall that a \textit{retraction} is a continuous mapping from a topological space into a subspace which preserves the position of all points in that subspace.
\begin{theorem}\label{gelnor}
Let $\mathfrak{A}$ be a residuated lattice. The following assertions are equivalent:
\begin{enumerate}
   \item [(1) \namedlabel{gelnor1}{(1)}] $\mathfrak{A}$ is Gelfand;
   \item [(2) \namedlabel{gelnor2}{(2)}] $Max_{h}(\mathfrak{A})$ is a retract of $Spec_{h}(\mathfrak{A})$.
 \end{enumerate}
\end{theorem}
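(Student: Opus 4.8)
The plan is to exhibit, in the Gelfand case, the only candidate retraction, namely the map sending a prime filter to the unique maximal filter above it, and to extract the uniqueness property from the retraction in the converse. Throughout, write $r\colon Spec_h(\mathfrak{A})\to Max_h(\mathfrak{A})$ for the assignment $\mathfrak{p}\mapsto$ ``the unique maximal filter containing $\mathfrak{p}$''; this is well defined \emph{precisely} when $\mathfrak{A}$ is Gelfand, and it fixes every maximal filter, so $r|_{Max_h(\mathfrak{A})}=\mathrm{id}$ and $r$ is automatically a set-theoretic retraction. The entire content of $\ref{gelnor1}\Rightarrow\ref{gelnor2}$ is therefore the continuity of $r$.

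To prove continuity I would use that $\{h(x)\cap Max(\mathfrak{A})\mid x\in A\}$ is a basis of closed sets for $Max_h(\mathfrak{A})$, so (since preimages commute with intersections) it suffices to show each $r^{-1}(h(x)\cap Max(\mathfrak{A}))$ is closed in $Spec_h(\mathfrak{A})$. First I would identify this set: using uniqueness, $\mathfrak{p}\subseteq\mathfrak{m}$ with $\mathfrak{m}$ maximal forces $\mathfrak{m}=r(\mathfrak{p})$, whence $r^{-1}(h(x)\cap Max(\mathfrak{A}))=\{\mathfrak{p}\in Spec(\mathfrak{A})\mid x\in r(\mathfrak{p})\}=\mathscr{G}(h(x)\cap Max(\mathfrak{A}))=:U$. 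Now I would establish closedness of $U$ through Theorem \ref{closefalzai}, i.e.\ by checking that $U$ is patch-closed and $\mathscr{S}$-stable. For $\mathscr{S}$-stability: if $\mathfrak{p}\in U$ and $\mathfrak{p}\subseteq\mathfrak{q}$, pick a maximal $\mathfrak{m}'\supseteq\mathfrak{q}$; then $\mathfrak{p}\subseteq\mathfrak{m}'$, so uniqueness gives $\mathfrak{m}'=r(\mathfrak{p})\in h(x)$, hence $\mathfrak{q}\in U$. For patch-closedness: $h(x)\cap Max(\mathfrak{A})$ is closed in the compact space $Max_h(\mathfrak{A})$ (Proposition \ref{hulkerinstr}\ref{hulkerinstr2}), hence hull-kernel compact, so Lemma \ref{retractlemma}\ref{retractlemma3} yields $U=\mathscr{G}(h(x)\cap Max(\mathfrak{A}))=cl_d(h(x)\cap Max(\mathfrak{A}))$, which is closed in $Spec_d(\mathfrak{A})$; since $\tau_d\subseteq\tau_p$, it is patch-closed. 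Theorem \ref{closefalzai} then gives that $U$ is hull-kernel closed, proving $r$ continuous.

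For the converse $\ref{gelnor2}\Rightarrow\ref{gelnor1}$, let $r\colon Spec_h(\mathfrak{A})\to Max_h(\mathfrak{A})$ be any retraction and suppose a prime $\mathfrak{p}$ lies in two distinct maximal filters $\mathfrak{m},\mathfrak{n}$. By Lemma \ref{retractlemma}\ref{retractlemma1}, $cl_h(\{\mathfrak{p}\})=hk(\{\mathfrak{p}\})=\{\mathfrak{q}\in Spec(\mathfrak{A})\mid\mathfrak{p}\subseteq\mathfrak{q}\}$, so $\mathfrak{m},\mathfrak{n}\in cl_h(\{\mathfrak{p}\})$. Continuity gives $r\big(cl_h(\{\mathfrak{p}\})\big)\subseteq cl^{Max}_{h}(\{r(\mathfrak{p})\})$; but $Max_h(\mathfrak{A})$ is $T_1$ (the same lemma restricted to $Max(\mathfrak{A})$ shows $cl^{Max}_{h}(\{\mathfrak{m}_0\})=\{\mathfrak{m}_0\}$), so $cl^{Max}_{h}(\{r(\mathfrak{p})\})=\{r(\mathfrak{p})\}$. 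Since $r$ fixes $Max(\mathfrak{A})$, this forces $\mathfrak{m}=r(\mathfrak{m})=r(\mathfrak{p})=r(\mathfrak{n})=\mathfrak{n}$, a contradiction; hence every prime is contained in a unique maximal filter and $\mathfrak{A}$ is Gelfand.

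The main obstacle is the continuity of $r$ in the forward direction. The subtlety is that $\mathscr{G}(h(x)\cap Max(\mathfrak{A}))$ is \emph{a priori} only a dual-hull-kernel-closed set, whereas what is needed is hull-kernel closedness; bridging this gap is exactly where the patch-topology characterization of hull-kernel closed sets (Theorem \ref{closefalzai}) must be combined with $\mathscr{S}$-stability, the latter being precisely where the Gelfand uniqueness hypothesis re-enters the argument.
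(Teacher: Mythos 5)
Your proof is correct, and its converse direction (\ref{gelnor2}$\Rightarrow$\ref{gelnor1}) coincides with the paper's: both pass through Lemma \ref{retractlemma}\ref{retractlemma1}, the $T_1$ property of $Max_h(\mathfrak{A})$, and the containment $f(cl_h(\{\mathfrak{p}\}))\subseteq cl_h^{Max}(\{f(\mathfrak{p})\})$. Where you genuinely diverge is the key step of the forward direction, namely the continuity of $\mathfrak{p}\rightsquigarrow\mathfrak{m}_{\mathfrak{p}}$. The paper proves this algebraically: for $\mathfrak{p}$ in the hull-kernel closure of $\mathcal{F}=f^{\leftarrow}(h(a)\cap Max(\mathfrak{A}))$, it forms the $\vee$-closed set $\mathscr{C}=\{x\vee y\mid x\notin \bigcup\mathcal{H},\ y\notin\mathfrak{p}\}$, checks that $k(\mathcal{F})\cap\mathscr{C}=\emptyset$, and invokes the prime separation theorem (Theorem \ref{prfilth}) to manufacture a prime $\mathfrak{q}\subseteq C\cap\mathfrak{p}$ whose unique maximal filter lies in $\mathcal{H}$, forcing $\mathfrak{p}\in\mathcal{F}$. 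You instead identify the preimage outright as the generalization set $\mathscr{G}(h(x)\cap Max(\mathfrak{A}))$ and verify hull-kernel closedness via the topological criterion of Theorem \ref{closefalzai}: patch-closedness comes from compactness of $Max_h(\mathfrak{A})$ (Proposition \ref{hulkerinstr}\ref{hulkerinstr2}) combined with Lemma \ref{retractlemma}\ref{retractlemma3}, while $\mathscr{S}$-stability is exactly where the Gelfand uniqueness hypothesis re-enters. Your route is shorter and more conceptual --- it is, in effect, the argument the paper itself uses in Theorem \ref{p2mprop} (\ref{p2mprop2}$\Rightarrow$\ref{p2mprop3}) for the single set $\mathscr{G}(\mathfrak{m})$, promoted to arbitrary closed subsets of $Max(\mathfrak{A})$ --- at the cost of leaning on the heavier imported machinery (Lemma \ref{retractlemma} and Theorem \ref{closefalzai}), whereas the paper's separation argument needs only Theorem \ref{prfilth} and elementary filter manipulation. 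Both are complete; your reduction to basic closed sets is legitimate because every closed set of $Max_h(\mathfrak{A})$ is an intersection of sets of the form $h(x)\cap Max(\mathfrak{A})$ and preimages commute with intersections, as you note.
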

\begin{proof}
\item [\ref{gelnor1}$\Rightarrow$\ref{gelnor2}:] For any prime filter $\mathfrak{p}$ of $\mathfrak{A}$, suppose that $\mathfrak{m_{p}}$ is the unique maximal filter containing $\mathfrak{p}$. Define $f:Spec(\mathfrak{A})\longrightarrow Max(\mathfrak{A})$ by $f(\mathfrak{p})=\mathfrak{m_{p}}$. Consider a basic closed set $\mathcal{H}=h(a)\cap Max(\mathfrak{A})$, for some $a\in A$. We claim that $\mathcal{F}=f^{\leftarrow}(\mathcal{H})$ is closed in $Spec(\mathfrak{A})$. Let $F=k(\mathcal{F})$ and $C=\bigcup \mathcal{H}$. Let $\mathfrak{p}\in cl_{h}^{Spec(\mathfrak{A})}(\mathcal{F})=hk(\mathcal{F})=h(F)$. So we have $F\subseteq \mathfrak{p}$. We have $F\subseteq C\cap \mathfrak{p}$, and so $F\cap (C\cap \mathfrak{p})^{c}=\emptyset$. Since $C^{c}$ and $\mathfrak{p}^{c}$ are $\vee$-closed subsets of $\mathfrak{A}$, so $\mathscr{C}(C^{c}\cup \mathfrak{p}^c)=\{x\vee y|x\notin C,~y\notin \mathfrak{p}\}$. Let us we denote $\mathscr{C}(C^{c}\cup \mathfrak{p}^c)$ by $\mathscr{C}$. We have $(C\cap \mathfrak{p})^{c}=C^{c}\cup \mathfrak{p}^c\subseteq \mathscr{C}$. Let $x\vee y\in F$, for some $x\in C^c$ and $y\in \mathfrak{p}^c$. Since $y\notin \mathfrak{p}$, so $y\notin F$. This follows that there exists $\mathfrak{q}\in \mathcal{F}$ such that $y\notin \mathfrak{q}$. On the other hand, $x\vee y\in F\subseteq \mathfrak{q}$, which implies that $x\in \mathfrak{q}\subseteq C$; a contradiction. So $F\cap \mathscr{C}=\emptyset$. Applying Theorem \ref{prfilth}, it ensures that there exists a prime filter $\mathfrak{q}$ containing $F$ such that $\mathfrak{q}\cap \mathscr{C}=\emptyset$. This results that $\mathfrak{q}\subseteq \mathscr{C}^{c}\subseteq C\cap \mathfrak{p}$. Assume by absurdum that $\mathscr{F}(\mathfrak{q},a)=A$. So $q\odot a^{n}=0$, for some $q\in \mathfrak{q}$ and integer $n$. Since $\mathfrak{q}\subseteq C$, so there exists some $\mathfrak{m}\in h(a)$ such that $q\in \mathfrak{m}$, but this implies that $0\in \mathfrak{m}$; a contradiction. Thus, $\mathscr{F}(\mathfrak{q},a)\subseteq \mathfrak{m}$, for some $\mathfrak{m}\in \mathcal{H}$. Hence, $\mathfrak{p}\subseteq \mathfrak{q}\subseteq \mathscr{F}(\mathfrak{q},a)\subseteq \mathfrak{m}$. This verifies that $\mathfrak{p}\in \mathcal{F}$. It states that $\mathcal{F}$ is a closed set in $Spec_{h}(\mathfrak{A})$, and so $f$ is continuous. Also, it is obvious that $f(\mathfrak{m})=\mathfrak{m}$, for any $\mathfrak{m}\in Max(\mathfrak{A})$. This win us.
\item [\ref{gelnor2}$\Rightarrow$\ref{gelnor1}:] Let $f:Spec_{h}(\mathfrak{A})\longrightarrow Max_{h}(\mathfrak{A})$ be a retraction. Let $\mathfrak{p}$ be a prime filter of $\mathfrak{A}$. Consider a maximal filter $\mathfrak{m}$ of $\mathfrak{A}$ containing $\mathfrak{p}$. Applying Lemma \ref{retractlemma}, we have $\mathfrak{m}\in cl_{h}^{Spec(\mathfrak{A})}(\mathfrak{p})$. Since $Max_{h}(\mathfrak{A})$ is $T_{1}$ and $f$ is continuous, we obtain that
\[\mathfrak{m}=f(\mathfrak{m})\in f(cl_{h}^{Spec(\mathfrak{A})}(\mathfrak{p}))\subseteq cl_{h}^{Max(\mathfrak{A})}(f(\mathfrak{p}))=\{f(\mathfrak{p})\}.\]
This shows that $\mathfrak{m}$ is the unique maximal filter of $\mathfrak{A}$ containing $\mathfrak{p}$.
\end{proof}
\begin{remark}\label{remarkretr}
By Theorem \ref{gelnor}, if $\mathfrak{A}$ is a Gelfand residuated lattice, the map $Spec(\mathfrak{A})\rightsquigarrow Max(\mathfrak{A})$, which sends any prime filter $\mathfrak{p}$ of $\mathfrak{A}$ to the unique maximal filter $\mathfrak{m_{p}}$ of $\mathfrak{A}$ containing it, is the unique retraction from $Spec_{h}(\mathfrak{A})$ into $Max_{h}(\mathfrak{A})$.
\end{remark}

By \cite{engelking1989general}, if $A$ is a compact space, $B$ is a Hausdorff space and  $f:A\longrightarrow B$ is a continuous map, then $f$ is a closed map.
\begin{theorem}\label{gelsnor}
Let $\mathfrak{A}$ be a residuated lattice. The following assertions are equivalent:
\begin{enumerate}
   \item [(1) \namedlabel{gelsnor1}{(1)}] $\mathfrak{A}$ is Gelfand;
   \item [(2) \namedlabel{gelsnor2}{(2)}] $Spec_{h}(\mathfrak{A})$ is a normal space.
 \end{enumerate}
\end{theorem}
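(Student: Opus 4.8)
The plan is to prove both implications by leveraging the retraction $f\colon Spec_{h}(\mathfrak{A})\to Max_{h}(\mathfrak{A})$ supplied by Theorem \ref{gelnor}, together with the separation properties recorded in Theorem \ref{p2mprop}, Remark \ref{gelmaxhau}, and the compactness in Proposition \ref{hulkerinstr}\ref{hulkerinstr2}.

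For the implication \ref{gelsnor2}$\Rightarrow$\ref{gelsnor1}, I would first observe that every maximal filter $\mathfrak{m}$ is a closed point of $Spec_{h}(\mathfrak{A})$, since $cl_{h}(\{\mathfrak{m}\})=hk(\{\mathfrak{m}\})=h(\mathfrak{m})=\{\mathfrak{m}\}$ by Lemma \ref{retractlemma}\ref{retractlemma1} and maximality. Consequently, any two distinct maximal filters $\mathfrak{m}$ and $\mathfrak{n}$ form a pair of disjoint closed singletons, so the assumed normality of $Spec_{h}(\mathfrak{A})$ produces disjoint open sets separating them. This is precisely condition \ref{p2mprop2} of Theorem \ref{p2mprop}, whence $\mathfrak{A}$ is Gelfand.

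For the converse \ref{gelsnor1}$\Rightarrow$\ref{gelsnor2}, assume $\mathfrak{A}$ is Gelfand, so that $f$ is a well-defined continuous retraction, $Spec_{h}(\mathfrak{A})$ is compact by Proposition \ref{hulkerinstr}\ref{hulkerinstr2}, and $Max_{h}(\mathfrak{A})$ is Hausdorff (indeed $T_{4}$) by Remark \ref{gelmaxhau}. Given disjoint closed sets $C_{1},C_{2}$ of $Spec_{h}(\mathfrak{A})$, Theorem \ref{closefalzai} tells me that each $C_{i}$ is $\mathscr{S}$-stable. I would then show that $f(C_{1})\cap f(C_{2})=\emptyset$: if some $\mathfrak{m}\in f(C_{1})\cap f(C_{2})$, then $\mathfrak{m}=\mathfrak{m}_{\mathfrak{p}_{i}}$ for some $\mathfrak{p}_{i}\in C_{i}$, so $\mathfrak{p}_{i}\subseteq \mathfrak{m}$, i.e. $\mathfrak{m}\in \mathscr{S}(\mathfrak{p}_{i})\subseteq \mathscr{S}(C_{i})=C_{i}$ for both $i$, contradicting $C_{1}\cap C_{2}=\emptyset$. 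Since each $C_{i}$ is closed in the compact space $Spec_{h}(\mathfrak{A})$, its continuous image $f(C_{i})$ is compact, hence closed in the Hausdorff space $Max_{h}(\mathfrak{A})$. Normality of $Max_{h}(\mathfrak{A})$ then yields disjoint open sets $U_{1},U_{2}$ with $f(C_{i})\subseteq U_{i}$, and pulling back gives disjoint open sets $V_{i}=f^{\leftarrow}(U_{i})$ with $C_{i}\subseteq f^{\leftarrow}(f(C_{i}))\subseteq V_{i}$. Thus $Spec_{h}(\mathfrak{A})$ is normal.

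The main obstacle is establishing the disjointness of the images $f(C_{1})$ and $f(C_{2})$; this is exactly where the Gelfand hypothesis is indispensable, as it is what makes $f$ single-valued and, combined with the $\mathscr{S}$-stability of hull-kernel closed sets, forces a putative common maximal filter $\mathfrak{m}$ to lie in both $C_{1}$ and $C_{2}$. Everything else is the routine transfer of normality along a continuous (compact-to-Hausdorff, hence closed) surjection onto a retract.
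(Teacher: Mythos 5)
Your proof is correct and follows essentially the same route as the paper's: the forward direction transfers normality along the retraction of Theorem \ref{gelnor}, using compactness of $Spec_{h}(\mathfrak{A})$ and the $T_{4}$ property of $Max_{h}(\mathfrak{A})$ from \textsc{Remark} \ref{gelmaxhau}, while the reverse direction observes that maximal filters are closed points and invokes Theorem \ref{p2mprop}. Your explicit verification that $f(C_{1})\cap f(C_{2})=\emptyset$ via the $\mathscr{S}$-stability of hull-kernel closed sets (Theorem \ref{closefalzai}) actually fills in a step that the paper asserts without justification.
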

\begin{proof}
  \item [\ref{gelsnor1}$\Rightarrow$\ref{gelsnor2}:] Using Theorem \ref{gelnor} and \textsc{Remark} \ref{remarkretr}, there exists a retraction $f:Spec_{h}(\mathfrak{A})\longrightarrow Max_{h}(\mathfrak{A})$, which sends any prime filter of $\mathfrak{A}$ to the unique maximal filter of $\mathfrak{A}$ containing $\mathfrak{p}$, for any prime filter $\mathfrak{p}$ of $\mathfrak{A}$. By \textsc{Remark} \ref{gelmaxhau}, $Max(\mathfrak{A})$ is a $T_4$ space, and so $f$ is a closed map. Let $C_1$ and $C_2$ be two disjoint closed sets in $Spec(\mathfrak{A})$, so $f(C_1)$ and $f(C_2)$ are disjoint closed sets in $Max(\mathfrak{A})$. Since $Max_{h}(\mathfrak{A})$ is normal, there exist disjoint open neighbourhoods $N_1$ and $N_2$ of $f(C_1)$ and $f(C_2)$ in $Max(\mathfrak{A})$, respectively. One can see that  $f^{\leftarrow}(N_1)$ and $f^{\leftarrow}(N_2)$ are disjoint open neighbourhoods of $C_1$ and $C_2$, respectively.
  \item [\ref{gelsnor2}$\Rightarrow$\ref{gelsnor1}:] Let $\mathfrak{m}$ be a maximal filter of $\mathfrak{A}$. Using Lemma \ref{retractlemma}\ref{retractlemma1}, it establishes that $cl_{h}^{Spec(\mathfrak{A})}(\mathfrak{m})=\mathscr{S}(\mathfrak{m})=\{\mathfrak{m}\}$. This shows that $\{\mathfrak{m}\}$ is closed in $Spec_{h}(\mathfrak{A})$. Consider distinct maximal filters  $\mathfrak{m}$ and $\mathfrak{n}$ of $\mathfrak{A}$. Since $Spec_{h}(\mathfrak{A})$ is normal, so there exist disjoint neighborhoods $N_1$ and $N_2$ for $\mathfrak{m}$ and $\mathfrak{n}$ in $Spec(\mathfrak{A})$, respectively. So the result holds by Theorem \ref{p2mprop}.
\end{proof}

Let $\mathfrak{A}$ be a residuated lattice. Consider the following relation
$\Re=\{(\mathfrak{p},\mathfrak{q})\in X^{2}\mid \mathfrak{p}\veebar \mathfrak{q}\neq A\}$ on $X=Spec(\mathfrak{A})$. Obviously, $\Re$ is reflexive and symmetric. Let $\overline{\Re}$ be the transitive closure of $\Re$.
\begin{proposition}\label{gelre}
  Let $\mathfrak{A}$ be a residuated lattice. The following assertions are equivalent:
  \begin{enumerate}
   \item [(1) \namedlabel{gelre1}{(1)}] $\mathfrak{A}$ is Gelfand;
   \item [(2) \namedlabel{gelre2}{(2)}] if $\mathfrak{m}$ is a maximal filter of $\mathfrak{A}$, then $\overline{\Re}(\mathfrak{m})=\mathscr{G}(\mathfrak{m})$.
 \end{enumerate}
\end{proposition}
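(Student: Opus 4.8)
The plan is to prove both implications through the reformulation that $\mathfrak{p}\,\Re\,\mathfrak{q}$ — that is, $\mathfrak{p}\veebar\mathfrak{q}\neq A$ — holds exactly when $\mathfrak{p}$ and $\mathfrak{q}$ lie in a common maximal filter. Indeed, $\mathfrak{p}\veebar\mathfrak{q}$ is proper precisely when it is contained in some maximal filter $\mathfrak{m}$ (by Zorn's lemma, any proper filter lies in a maximal one), and $\mathfrak{p}\veebar\mathfrak{q}\subseteq\mathfrak{m}$ is equivalent to $\mathfrak{p}\subseteq\mathfrak{m}$ and $\mathfrak{q}\subseteq\mathfrak{m}$. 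Before either direction I would record the inclusion $\mathscr{G}(\mathfrak{m})\subseteq\overline{\Re}(\mathfrak{m})$, which holds in every residuated lattice: if $\mathfrak{p}\subseteq\mathfrak{m}$ then $\mathfrak{p}\veebar\mathfrak{m}=\mathfrak{m}\neq A$, so $\mathfrak{p}\,\Re\,\mathfrak{m}$ and hence $\mathfrak{p}\in\overline{\Re}(\mathfrak{m})$. Thus the real content of (2) is the reverse inclusion $\overline{\Re}(\mathfrak{m})\subseteq\mathscr{G}(\mathfrak{m})$.

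For $\ref{gelre1}\Rightarrow\ref{gelre2}$, assume $\mathfrak{A}$ is Gelfand, so each prime filter lies in a unique maximal filter; write $\mathfrak{m}_{\mathfrak{p}}$ for the maximal filter over $\mathfrak{p}$. By the reformulation above, $\mathfrak{p}\,\Re\,\mathfrak{q}$ forces $\mathfrak{p}$ and $\mathfrak{q}$ into a common maximal filter, which by uniqueness must equal both $\mathfrak{m}_{\mathfrak{p}}$ and $\mathfrak{m}_{\mathfrak{q}}$; hence $\mathfrak{p}\,\Re\,\mathfrak{q}$ already implies $\mathfrak{m}_{\mathfrak{p}}=\mathfrak{m}_{\mathfrak{q}}$. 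Consequently $\Re$ is transitive, so $\overline{\Re}=\Re$, and for a maximal filter $\mathfrak{m}$ (with $\mathfrak{m}_{\mathfrak{m}}=\mathfrak{m}$) any $\mathfrak{q}\in\overline{\Re}(\mathfrak{m})=\Re(\mathfrak{m})$ satisfies $\mathfrak{m}_{\mathfrak{q}}=\mathfrak{m}$, i.e. $\mathfrak{q}\subseteq\mathfrak{m}$, so $\mathfrak{q}\in\mathscr{G}(\mathfrak{m})$. Combined with the easy inclusion, this yields the equality $\overline{\Re}(\mathfrak{m})=\mathscr{G}(\mathfrak{m})$.

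For $\ref{gelre2}\Rightarrow\ref{gelre1}$, I would argue by contradiction. Suppose some prime filter $\mathfrak{p}$ lies in two distinct maximal filters $\mathfrak{m}\neq\mathfrak{n}$. Then $\mathfrak{p}\subseteq\mathfrak{m}$ and $\mathfrak{p}\subseteq\mathfrak{n}$ give $\mathfrak{m}\,\Re\,\mathfrak{p}$ and $\mathfrak{p}\,\Re\,\mathfrak{n}$, whence $(\mathfrak{m},\mathfrak{n})\in\overline{\Re}$ and so $\mathfrak{n}\in\overline{\Re}(\mathfrak{m})$. By hypothesis $\overline{\Re}(\mathfrak{m})=\mathscr{G}(\mathfrak{m})$, so $\mathfrak{n}\subseteq\mathfrak{m}$; maximality of $\mathfrak{n}$ then forces $\mathfrak{m}=\mathfrak{n}$, a contradiction. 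Hence every prime filter lies in a unique maximal filter and $\mathfrak{A}$ is Gelfand.

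The delicate point is in $\ref{gelre1}\Rightarrow\ref{gelre2}$: one must see that the transitive closure cannot reach beyond $\mathscr{G}(\mathfrak{m})$. This is exactly where the Gelfand hypothesis is essential, since it collapses $\Re$ to an equivalence relation whose classes are the fibres $\{\mathfrak{q}\in Spec(\mathfrak{A})\mid\mathfrak{q}\subseteq\mathfrak{m}\}$ of the retraction $Spec_{h}(\mathfrak{A})\to Max_{h}(\mathfrak{A})$ of Theorem \ref{gelnor}. Without uniqueness, a single prime shared between two maximal filters would chain them together and enlarge the class past $\mathscr{G}(\mathfrak{m})$ — which is precisely the mechanism exploited in the converse.
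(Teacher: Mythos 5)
Your proof is correct and follows essentially the same route as the paper: both directions hinge on the observation that $\mathfrak{p}\,\Re\,\mathfrak{q}$ means $\mathfrak{p}$ and $\mathfrak{q}$ lie in a common maximal filter, which under the Gelfand hypothesis must be the unique one over each of them. The only cosmetic difference is that you package this as transitivity of $\Re$ (so that $\overline{\Re}=\Re$), whereas the paper runs an explicit induction on the length of the $\Re$-chain --- the same collapsing mechanism, just unrolled.
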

\begin{proof}
  \item [\ref{gelre1}$\Rightarrow$\ref{gelre2}:] Let $\mathfrak{p}\in \overline{\Re}(\mathfrak{m})$. So there exists a finite set $\{\mathfrak{p}_1,\cdots,\mathfrak{p}_n\}$ of prime filters of $\mathfrak{A}$ with $n\geq 2$ such that $\mathfrak{p}_1=\mathfrak{p}$, $\mathfrak{p}_n=\mathfrak{m}$, and $(\mathfrak{p}_{i},\mathfrak{p}_{i+1})\in \Re$, for all $1\leq i\leq n-1$. By induction on $n$ we shall prove that $\mathfrak{p}\subseteq \mathfrak{m}$. If $n=2$, then $\mathfrak{p}\veebar \mathfrak{m}\neq A$, and this implies that $\mathfrak{p}\subseteq \mathfrak{m}$. Assume that $n>2$. We have $\mathfrak{p}_{n-2}\veebar \mathfrak{p}_{n-1}\neq A$ and $\mathfrak{p}_{n-1}\subseteq \mathfrak{m}$. So $\mathfrak{p}_{n-2}\veebar \mathfrak{p}_{n-1}\subseteq \mathfrak{n}$, for some maximal filter $\mathfrak{n}$ of $\mathfrak{A}$. Thus $\mathfrak{p}_{n-2},\mathfrak{p}_{n-1}\subseteq \mathfrak{n}$, and this states that $\mathfrak{n}=\mathfrak{m}$. Hence $\mathfrak{p}_{n-2}\subseteq \mathfrak{m}$, and so in the equivalency $(\mathfrak{p},\mathfrak{m})\in \overline{\Re}$, the number of the involved primes is reduced to $n-1$. Therefore by the induction hypothesis, $\mathfrak{p}\subseteq \mathfrak{m}$. This shows that $\overline{\Re}(\mathfrak{m})\subseteq \mathscr{G}(\mathfrak{m})$. The inverse inclusion is evident.
  \item [\ref{gelre2}$\Rightarrow$\ref{gelre1}:] Let $\mathfrak{p}$ be a prime filter of $\mathfrak{A}$ such that $\mathfrak{p}\subseteq \mathfrak{m}$ and $\mathfrak{p}\subseteq \mathfrak{n}$, for some
maximal filters $\mathfrak{m}$ and $\mathfrak{n}$ of $\mathfrak{A}$. It follows that $\mathfrak{p}\in \mathscr{G}(\mathfrak{m})\cap \mathscr{G}(\mathfrak{n})$, and so $\mathfrak{m}=\mathfrak{n}$.
\end{proof}

Let $A_{\tau}$ be a topological space, and $E$ be an equivalence relation on $A$. In the following, by $A_{\tau}/E$ we mean the quotient of the space $A_{\tau}$ modulo $E$. By \citet[p.90]{engelking1989general}, the quotient map $\pi:A_{\tau}\longrightarrow A_{\tau}/E$ is continuous, and a mapping $f$ of the quotient space $A_{\tau}/E$ to a topological space $B_{\zeta}$ is continuous if and only if the composition $f\circ \pi$ is continuous.
\begin{corollary}\label{akharincoro}
  A residuated lattice $\mathfrak{A}$ is Gelfand if and only if the map $\eta:Max_{h}(\mathfrak{A})\longrightarrow Spec_{h}(\mathfrak{A})/\overline{\Re}$, given by $\mathfrak{m}\rightsquigarrow \overline{\Re}(\mathfrak{m})$, is a homeomorphism.
\end{corollary}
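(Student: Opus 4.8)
The plan is to leverage Proposition \ref{gelre}---which, under the Gelfand hypothesis, identifies the $\overline{\Re}$-class of each maximal filter $\mathfrak{m}$ with its generalization $\mathscr{G}(\mathfrak{m})$---together with the retraction $f\colon Spec_h(\mathfrak{A})\to Max_h(\mathfrak{A})$ supplied by Theorem \ref{gelnor} and \textsc{Remark} \ref{remarkretr}. Write $\pi\colon Spec_h(\mathfrak{A})\to Spec_h(\mathfrak{A})/\overline{\Re}$ for the quotient map and $\iota\colon Max_h(\mathfrak{A})\hookrightarrow Spec_h(\mathfrak{A})$ for the inclusion. Since the basic closed sets of $Max_h(\mathfrak{A})$ are exactly the traces $h(x)\cap Max(\mathfrak{A})$ of the basic closed sets $h(x)$ of $Spec_h(\mathfrak{A})$, the topology of $Max_h(\mathfrak{A})$ is the subspace topology inherited from $Spec_h(\mathfrak{A})$, so $\iota$ is a continuous embedding. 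Two facts then hold with no Gelfand assumption: first, $\eta=\pi\circ\iota$, so $\eta$ is continuous; second, $\eta$ is surjective, because every prime $\mathfrak{p}$ lies in some maximal filter $\mathfrak{m}$ by Zorn's lemma, whence $\mathfrak{p}\veebar\mathfrak{m}=\mathfrak{m}\neq A$ gives $(\mathfrak{p},\mathfrak{m})\in\Re$ and $\overline{\Re}(\mathfrak{p})=\overline{\Re}(\mathfrak{m})=\eta(\mathfrak{m})$.

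For the forward implication, assume $\mathfrak{A}$ is Gelfand. I would first establish injectivity of $\eta$: if $\overline{\Re}(\mathfrak{m})=\overline{\Re}(\mathfrak{n})$, then Proposition \ref{gelre} gives $\mathscr{G}(\mathfrak{m})=\overline{\Re}(\mathfrak{m})=\overline{\Re}(\mathfrak{n})=\mathscr{G}(\mathfrak{n})$; since $\mathfrak{m}\in\mathscr{G}(\mathfrak{m})=\mathscr{G}(\mathfrak{n})$ forces $\mathfrak{m}\subseteq\mathfrak{n}$, and both filters are maximal, $\mathfrak{m}=\mathfrak{n}$. Thus $\eta$ is a continuous bijection, and it remains to show $\eta^{-1}$ is continuous. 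Here I would invoke the universal property of the quotient recalled immediately before the statement: $\eta^{-1}$ is continuous if and only if $\eta^{-1}\circ\pi$ is continuous. A direct computation identifies $\eta^{-1}\circ\pi=f$: for any prime $\mathfrak{p}$, letting $\mathfrak{m}_{\mathfrak{p}}$ denote the unique maximal filter containing it, we have $\mathfrak{p}\in\mathscr{G}(\mathfrak{m}_{\mathfrak{p}})=\overline{\Re}(\mathfrak{m}_{\mathfrak{p}})$, so $\pi(\mathfrak{p})=\overline{\Re}(\mathfrak{p})=\overline{\Re}(\mathfrak{m}_{\mathfrak{p}})$ and hence $\eta^{-1}(\pi(\mathfrak{p}))=\mathfrak{m}_{\mathfrak{p}}=f(\mathfrak{p})$. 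As $f$ is continuous, so is $\eta^{-1}$, and $\eta$ is a homeomorphism.

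For the converse I would argue by contraposition, using only the injectivity inherent in being a homeomorphism. If $\mathfrak{A}$ is not Gelfand, some prime $\mathfrak{p}$ is contained in two distinct maximal filters $\mathfrak{m}$ and $\mathfrak{n}$; then $\mathfrak{p}\veebar\mathfrak{m}=\mathfrak{m}\neq A$ and $\mathfrak{p}\veebar\mathfrak{n}=\mathfrak{n}\neq A$, so $(\mathfrak{m},\mathfrak{p}),(\mathfrak{p},\mathfrak{n})\in\Re$ and therefore $\overline{\Re}(\mathfrak{m})=\overline{\Re}(\mathfrak{n})$, i.e. $\eta(\mathfrak{m})=\eta(\mathfrak{n})$ with $\mathfrak{m}\neq\mathfrak{n}$. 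Hence $\eta$ is not injective and cannot be a homeomorphism.

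The identifications $\eta=\pi\circ\iota$ and $\eta^{-1}\circ\pi=f$ are routine, as is the surjectivity. The point requiring care is the continuity of $\eta^{-1}$, and this is precisely where the continuity of the retraction $f$ (from Theorem \ref{gelnor}) feeds into the universal property of the quotient topology---without it one would have only a continuous bijection, which in general need not be a homeomorphism. The other small technical check is that $Max_h(\mathfrak{A})$ genuinely carries the subspace topology from $Spec_h(\mathfrak{A})$, so that $\iota$ is an embedding and the factorization $\eta=\pi\circ\iota$ is legitimate.
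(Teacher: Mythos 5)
Your proof is correct, and your forward direction is essentially the paper's own argument: the same factorization $\eta=\pi\circ\iota$ for continuity, the same appeal to Proposition \ref{gelre} for injectivity, and the same identification of $\eta^{-1}\circ\pi$ with the retraction $f$ of Theorem \ref{gelnor} and \textsc{Remark} \ref{remarkretr}, fed into the universal property of the quotient topology to get continuity of $\eta^{-1}$. Where you genuinely diverge is the converse. The paper stays topological: if $\eta$ is a homeomorphism, then $\eta^{-1}\circ\pi$ is a continuous retraction of $Spec_{h}(\mathfrak{A})$ onto $Max_{h}(\mathfrak{A})$, so $\mathfrak{A}$ is Gelfand by Theorem \ref{gelnor}. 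You instead argue by contraposition using only injectivity: a prime $\mathfrak{p}$ contained in two distinct maximal filters $\mathfrak{m}$ and $\mathfrak{n}$ gives $(\mathfrak{m},\mathfrak{p}),(\mathfrak{p},\mathfrak{n})\in\Re$, hence $\eta(\mathfrak{m})=\eta(\mathfrak{n})$ with $\mathfrak{m}\neq\mathfrak{n}$. Your version is more elementary---the converse needs no topology at all, only the set-theoretic collapse of $\overline{\Re}$-classes---whereas the paper's is a one-line reuse of Theorem \ref{gelnor} as a black box; both are valid, and yours has the small advantage of isolating exactly which part of the homeomorphism hypothesis (mere injectivity) carries the converse. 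You also make explicit two points the paper treats as evident: that $Max_{h}(\mathfrak{A})$ carries the subspace topology inherited from $Spec_{h}(\mathfrak{A})$, so $\iota$ is an embedding and the factorization $\eta=\pi\circ\iota$ is legitimate, and that surjectivity of $\eta$ holds with no Gelfand hypothesis at all.
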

\begin{proof}
  Let $\mathfrak{A}$ be a Gelfand residuated lattice. It is evident that $Spec(\mathfrak{A})/\overline{\Re}=\{\overline{\Re}(\mathfrak{m})\mid \mathfrak{m}\in Max(\mathfrak{A})\}$, and this implies that $\eta$ is a surjection. The injectivity of $\eta$ follows by Theorem \ref{gelre}, and the continuity of it follows by $\eta=\pi\circ i$, where $i$ is the inclusion map. Applying \textsc{Remark} \ref{remarkretr}, it follows that $\eta^{-1}\circ \pi$ is a retraction, and this verifies the continuity of $\eta^{-1}$, see \citet[Proposition 4.2.4]{engelking1989general}. This shows that $\eta$ is a homeomorphism. Conversely, let $\eta$ be a homeomorphism. Obviously, $\eta^{-1}\circ \pi$ is a retraction, and so $\mathfrak{A}$ is Gelfand due to Theorem \ref{gelnor}.
\end{proof}

Let $\mathfrak{A}$ be a residuated lattice. Consider the relation $\Im=\{(\mathfrak{p},\mathfrak{q})\in X^{2}\mid D(\mathfrak{p})\veebar D(\mathfrak{q})\neq A\}$ on $X=Spec(\mathfrak{A})$. Obviously, $\Im$ is reflexive and symmetric. Let $\overline{\Im}$ be the transitive closure of $\Im$.
\begin{theorem}\label{3hausnorm}
  Let $\mathfrak{A}$ be a residuated lattice. The following assertions are equivalent:
  \begin{enumerate}
   \item [(1) \namedlabel{3hausnorm1}{(1)}] $\mathfrak{A}$ is Gelfand;
   \item [(2) \namedlabel{3hausnorm2}{(2)}] for a given maximal filter $\mathfrak{m}$ of $\mathfrak{A}$, $\overline{\Im}(\mathfrak{m})=\mathscr{G}(\mathfrak{m})$.
 \end{enumerate}
\end{theorem}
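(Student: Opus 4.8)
The plan is to mirror the proof of Proposition \ref{gelre} almost verbatim, replacing the relation $\Re$ by $\Im$ and the prime filters $\mathfrak{p},\mathfrak{q}$ by their $D$-parts $D(\mathfrak{p}),D(\mathfrak{q})$. Two elementary facts about the operator $D$ will carry most of the weight. First, $D$ is antitone: since $D(\mathfrak{p})=k(\mathscr{G}(\mathfrak{p}))$ by Proposition \ref{omegprop}\ref{omegprop2}, with $\mathscr{G}$ monotone and $k$ antitone, $\mathfrak{p}\subseteq\mathfrak{q}$ forces $D(\mathfrak{q})\subseteq D(\mathfrak{p})$. Second, $D(\mathfrak{p})\subseteq\mathfrak{p}$ (because $\mathfrak{p}\in\mathscr{G}(\mathfrak{p})$), so $D(\mathfrak{p})$ is proper whenever $\mathfrak{p}$ is. The only genuinely Gelfand-specific inputs will be two of the equivalences already collected in Theorem \ref{pmprop}, namely \ref{pmprop8} ($\mathfrak{m}$ is the unique maximal filter over $D(\mathfrak{m})$) and \ref{pmprop5} ($D(\mathfrak{p})\subseteq\mathfrak{m}$ forces $\mathfrak{p}\subseteq\mathfrak{m}$).

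For \ref{3hausnorm1}$\Rightarrow$\ref{3hausnorm2}, the inclusion $\mathscr{G}(\mathfrak{m})\subseteq\overline{\Im}(\mathfrak{m})$ is immediate: if $\mathfrak{p}\subseteq\mathfrak{m}$, antitonicity gives $D(\mathfrak{m})\subseteq D(\mathfrak{p})$, so $D(\mathfrak{p})\veebar D(\mathfrak{m})=D(\mathfrak{p})\subseteq\mathfrak{p}\neq A$, i.e. $(\mathfrak{p},\mathfrak{m})\in\Im\subseteq\overline{\Im}$. For the reverse inclusion I take $\mathfrak{p}\in\overline{\Im}(\mathfrak{m})$ witnessed by a chain $\mathfrak{p}=\mathfrak{p}_1,\dots,\mathfrak{p}_n=\mathfrak{m}$ with $(\mathfrak{p}_i,\mathfrak{p}_{i+1})\in\Im$, and prove $\mathfrak{p}\subseteq\mathfrak{m}$ by induction on $n$.

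The base case $n=2$ carries the real content and is the step I expect to be the main obstacle, since (unlike the $\Re$-version, where $\mathfrak{p}\veebar\mathfrak{m}\neq A$ already forces $\mathfrak{p}\subseteq\mathfrak{m}$) the coarser relation $\Im$ cannot be controlled without invoking the Gelfand hypothesis. From $(\mathfrak{p},\mathfrak{m})\in\Im$ the join $D(\mathfrak{p})\veebar D(\mathfrak{m})$ is proper, hence lies in some maximal filter $\mathfrak{n}$; in particular $D(\mathfrak{m})\subseteq\mathfrak{n}$, so Theorem \ref{pmprop}\ref{pmprop8} gives $\mathfrak{n}=\mathfrak{m}$, whence $D(\mathfrak{p})\subseteq\mathfrak{m}$ and Theorem \ref{pmprop}\ref{pmprop5} yields $\mathfrak{p}\subseteq\mathfrak{m}$. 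For the step with $n>2$, the tail $(\mathfrak{p}_{n-1},\mathfrak{m})\in\Im$ gives $\mathfrak{p}_{n-1}\subseteq\mathfrak{m}$ by the base case; then $(\mathfrak{p}_{n-2},\mathfrak{p}_{n-1})\in\Im$ places $D(\mathfrak{p}_{n-2})\veebar D(\mathfrak{p}_{n-1})$ inside a maximal filter $\mathfrak{n}$, and since $D(\mathfrak{m})\subseteq D(\mathfrak{p}_{n-1})\subseteq\mathfrak{n}$, uniqueness again forces $\mathfrak{n}=\mathfrak{m}$, so $D(\mathfrak{p}_{n-2})\subseteq\mathfrak{m}$ and hence $\mathfrak{p}_{n-2}\subseteq\mathfrak{m}$. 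This shows $(\mathfrak{p}_{n-2},\mathfrak{m})\in\Im$, shortening the chain to length $n-1$, and the induction hypothesis finishes.

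For \ref{3hausnorm2}$\Rightarrow$\ref{3hausnorm1}, I argue uniqueness of the maximal filter over each prime. Suppose a prime $\mathfrak{p}$ lies in two maximal filters $\mathfrak{m}$ and $\mathfrak{n}$. As in the easy inclusion above, $\mathfrak{p}\subseteq\mathfrak{m}$ and $\mathfrak{p}\subseteq\mathfrak{n}$ give $(\mathfrak{m},\mathfrak{p}),(\mathfrak{p},\mathfrak{n})\in\Im$ (using symmetry of $\Im$), so $(\mathfrak{m},\mathfrak{n})\in\overline{\Im}$, i.e. $\mathfrak{n}\in\overline{\Im}(\mathfrak{m})=\mathscr{G}(\mathfrak{m})$; thus $\mathfrak{n}\subseteq\mathfrak{m}$, and maximality gives $\mathfrak{m}=\mathfrak{n}$. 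Since existence of a maximal filter over the proper filter $\mathfrak{p}$ is supplied by Zorn's lemma, $\mathfrak{A}$ is Gelfand.
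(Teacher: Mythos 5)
Your proof is correct and follows essentially the same route as the paper's: the same chain-shortening induction for \ref{3hausnorm1}$\Rightarrow$\ref{3hausnorm2}, with the base case and inductive step resolved through Theorem \ref{pmprop} (your explicit use of \ref{pmprop8} and \ref{pmprop5} just fills in what the paper cites tersely as ``due to Theorem \ref{pmprop}''), and the same easy argument for the converse, which the paper dismisses as ``nothing to prove.'' No gaps; your write-up is simply a more detailed rendering of the paper's proof.
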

\begin{proof}
  \item [\ref{3hausnorm1}$\Rightarrow$\ref{3hausnorm2}:] Let $\mathfrak{m}$ be a maximal filter of $\mathfrak{A}$. Consider $\mathfrak{p}\in \overline{\Im}(\mathfrak{m})$. So there exists a finite set $\{\mathfrak{p}_1,\cdots,\mathfrak{p}_n\}$ of elements of $Spec(\mathfrak{A})$ with $n\geq 2$ such that $\mathfrak{p}_1=\mathfrak{p}$, $\mathfrak{p}_n=\mathfrak{m}$, and $(\mathfrak{p}_{i},\mathfrak{p}_{i+1})\in \Im$, for all $1\leq i\leq n-1$. If $n=2$, then $D(\mathfrak{p})\veebar D(\mathfrak{m})\neq A$, and so $\mathfrak{p}\subseteq \mathfrak{m}$ due to Theorem \ref{pmprop}. Assume that $n>2$. We have $D(\mathfrak{p_{n-2}})\veebar D(\mathfrak{p_{n-1}})\neq A$ and $\mathfrak{p}_{n-1}\subseteq \mathfrak{m}$. This implies that $\mathfrak{p}_{n-2}\subseteq \mathfrak{m}$. This verifies that $(\mathfrak{p}_{n-2},\mathfrak{m})\in \Im$. Hence, in the equivalency $(\mathfrak{p},\mathfrak{m})\in \overline{\Im}$, the number of the involved primes is reduced to $n-1$. Therefore, $\mathfrak{p}\subseteq \mathfrak{m}$. This shows that $\overline{\Im}(\mathfrak{m})\subseteq \mathscr{G}(\mathfrak{m})$. The inverse inclusion is evident.
\item [\ref{3hausnorm2}$\Rightarrow$\ref{3hausnorm1}:] There is nothing to prove.
\end{proof}

The proof of the following corollary is analogous to the proof of Corollary \ref{akharincoro}, and so it is left to the reader.
\begin{corollary}\label{minjhomeo}
  Let $\mathfrak{A}$ be a residuated lattice. $\mathfrak{A}$ is Gelfand if and only if the map $\eta:Max_{h}(\mathfrak{A})\longrightarrow Spec_{h}(\mathfrak{A})/\overline{\Im}$, given by $\mathfrak{m}\rightsquigarrow \overline{\Im}(\mathfrak{m})$, is a homeomorphism.
\end{corollary}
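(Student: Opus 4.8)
The plan is to mirror the proof of Corollary \ref{akharincoro} essentially line for line, replacing the relation $\overline{\Re}$ by $\overline{\Im}$ throughout and invoking Theorem \ref{3hausnorm} wherever the earlier argument used Theorem \ref{gelre}. So I would first assume $\mathfrak{A}$ is Gelfand and establish that $\eta$ is a continuous bijection with continuous inverse, then handle the converse by producing a retraction and citing Theorem \ref{gelnor}.

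For surjectivity I would verify $Spec(\mathfrak{A})/\overline{\Im}=\{\overline{\Im}(\mathfrak{m})\mid \mathfrak{m}\in Max(\mathfrak{A})\}$, i.e.\ that every class meets $Max(\mathfrak{A})$. Given a prime filter $\mathfrak{p}$, choose a maximal filter $\mathfrak{m}\supseteq \mathfrak{p}$; then $\mathscr{G}(\mathfrak{p})\subseteq \mathscr{G}(\mathfrak{m})$, so Proposition \ref{omegprop}\ref{omegprop2} gives $D(\mathfrak{m})=k(\mathscr{G}(\mathfrak{m}))\subseteq k(\mathscr{G}(\mathfrak{p}))=D(\mathfrak{p})$, whence $D(\mathfrak{p})\veebar D(\mathfrak{m})=D(\mathfrak{p})\subseteq \mathfrak{p}\neq A$ and therefore $(\mathfrak{p},\mathfrak{m})\in \Im\subseteq \overline{\Im}$; this places $\mathfrak{p}$ and $\mathfrak{m}$ in the same class. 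For injectivity I would use Theorem \ref{3hausnorm}: if $\overline{\Im}(\mathfrak{m})=\overline{\Im}(\mathfrak{n})$ for maximal filters $\mathfrak{m},\mathfrak{n}$, then $\mathfrak{m}\in \overline{\Im}(\mathfrak{n})=\mathscr{G}(\mathfrak{n})$, so $\mathfrak{m}\subseteq \mathfrak{n}$ and hence $\mathfrak{m}=\mathfrak{n}$. Continuity of $\eta$ is immediate from the factorization $\eta=\pi\circ i$, where $i:Max_{h}(\mathfrak{A})\hookrightarrow Spec_{h}(\mathfrak{A})$ is the inclusion and $\pi$ is the quotient map. For the continuity of $\eta^{-1}$, I would note that $\eta^{-1}\circ \pi$ sends each prime filter to the unique maximal filter containing it (using that $\mathfrak{p}$ and $\mathfrak{m}$ share a class, as above), which is exactly the retraction of \textsc{Remark} \ref{remarkretr}; hence $\eta^{-1}\circ \pi$ is continuous, and by the quotient-map criterion \citep[Proposition 4.2.4]{engelking1989general} so is $\eta^{-1}$. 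Thus $\eta$ is a homeomorphism.

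For the converse, assuming $\eta$ is a homeomorphism, $\eta^{-1}\circ \pi$ is continuous and fixes every maximal filter, since $\eta(\mathfrak{m})=\overline{\Im}(\mathfrak{m})$ gives $\eta^{-1}(\pi(\mathfrak{m}))=\mathfrak{m}$; it is therefore a retraction $Spec_{h}(\mathfrak{A})\to Max_{h}(\mathfrak{A})$, and $\mathfrak{A}$ is Gelfand by Theorem \ref{gelnor}. The only step that is not a literal transcription of the $\overline{\Re}$-argument is the surjectivity computation: for $\overline{\Re}$ the inclusion $\mathfrak{p}\subseteq\mathfrak{m}$ gives $(\mathfrak{p},\mathfrak{m})\in\Re$ outright, whereas here it must be routed through $D(\mathfrak{m})\subseteq D(\mathfrak{p})$ via Proposition \ref{omegprop}\ref{omegprop2}. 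This is the only mild obstacle; once it is cleared, everything else is structurally identical to Corollary \ref{akharincoro}.
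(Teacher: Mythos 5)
Your proposal is correct and is exactly what the paper intends: the paper leaves this proof to the reader as "analogous to Corollary \ref{akharincoro}", and your argument is precisely that analogue, with Theorem \ref{3hausnorm} replacing Theorem \ref{gelre}. You also correctly identify and close the one non-literal step — showing each $\overline{\Im}$-class meets $Max(\mathfrak{A})$ via $D(\mathfrak{m})\subseteq D(\mathfrak{p})$ — which is sound (indeed, even more directly, $D(\mathfrak{p})\veebar D(\mathfrak{m})\subseteq\mathfrak{m}\neq A$ since both $D$-parts lie in $\mathfrak{m}$).
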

\citet[p. 263]{bkouche1970couples} introduced the notion of a soft ring; ``Nous dirons qu'un anneau $A$ est \textit{mou} s'il est sans radical de Jacobson, et si tout id\'{o}al maximal est mou (c'est-\`{o}-dire max$A$ est un espace séparé). \cite{de1971commutative} introduced the notion of a \textit{pm-ring} as a unitary commutative ring in which any prime ideal is contained in a unique maximal ideal.  \citet[Theorem 1.2]{de1971commutative} proved that if  $\mathfrak{A}$ is a pm-ring, then $Max(\mathfrak{A})$ is a Hausdorff space with the Zariski topology and the converse holds provided that $\mathfrak{J(A)=N(A)}$. In spirit of \cite{bkouche1970couples}, \citet[p. 459]{levaro1975projective} considered a bit more general notion, \textit{quasi-soft rings} as a ring $\mathfrak{A}$ in which for any maximal filter $\mathfrak{m}$ of $\mathfrak{A}$ the localization map $\pi_{\mathfrak{m}}:\mathfrak{A}\longrightarrow \mathfrak{A_{m}}$ is surjective. He proved that a soft ring is quasi-soft.  Motivated by \cite{de1971commutative} the notion of a \textit{pm-lattice} is introduced by \cite{pawar1977pm} as a bounded distributive lattice in which any prime ideal is contained in a unique maximal prime ideal. \citet[Theorem 6]{pawar1977pm} proved that if $\mathfrak{A}$ is a pm-lattice, then $Max(\mathfrak{A})$ is a Hausdorff space with the Stone topology. They also stated that the converse of this theorem holds when $\mathfrak{A}$ is complemented. \citet[Theorem 4.3]{aghajani2020characterizations} showed that quasi-soft rings and Gelfand rings coincide. In the following, the above results improved and generalized to residuated lattices.
\begin{definition}
  Let $\mathfrak{A}$ be a residuated lattice. $\mathfrak{A}$ is called \textit{soft} provided that  $\mathfrak{A}$ is a semisimple  and any element of $h(Rad(\mathfrak{A}))$ is contained in a unique maximal filter of $\mathfrak{A}$.
\end{definition}

The next theorem gives some criteria for the set of maximal filters of a residuated lattice to be a Hausdorff space with the hull-kernel topology.
\begin{theorem}\label{1hausnorm}\citep{rasouli2018hull}
Let $\mathfrak{A}$ be a residuated lattice. The following assertions are equivalent:
\begin{enumerate}
   \item [(1) \namedlabel{1hausnorm1}{(1)}] $Max_{h}(\mathfrak{A})$ is a Hausdorff space;
   \item [(2) \namedlabel{1hausnorm2}{(2)}] any element of $h(Rad(\mathfrak{A}))$ is contained in a unique maximal filter of $\mathfrak{A}$;
   \item [(3) \namedlabel{1hausnorm3}{(3)}] $Max_{h}(\mathfrak{A})$ is a retract of $h(Rad(\mathfrak{A}))_{h}$;
   \item [(4) \namedlabel{1hausnorm4}{(4)}] $h(Rad(\mathfrak{A}))_{h}$ is a normal space;
   \item [(5) \namedlabel{1hausnorm5}{(5)}] $\mathscr{G}(\mathfrak{m})\cap h(Rad(\mathfrak{A}))$ is closed in $h(Rad(\mathfrak{A}))_{h}$, for any maximal filter $\mathfrak{m}$ of $\mathfrak{A}$;
   \item [(6) \namedlabel{1hausnorm6}{(6)}] for any $a,b\in A$ with $a\odot b=0$, there exist some integers $n,m$ such that $\neg a^n\vee \neg b^m\in Rad(\mathfrak{A})$.
 \end{enumerate}
\end{theorem}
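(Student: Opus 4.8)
The guiding observation is that $h(Rad(\mathfrak{A}))$ is nothing but the hull-kernel closure of $Max(\mathfrak{A})$ inside $Spec_h(\mathfrak{A})$: since $k(Max(\mathfrak{A}))=Rad(\mathfrak{A})$, Lemma \ref{retractlemma}\ref{retractlemma1} gives $cl_h(Max(\mathfrak{A}))=hk(Max(\mathfrak{A}))=h(Rad(\mathfrak{A}))$. Hence $h(Rad(\mathfrak{A}))_h$ is a closed, therefore compact (Proposition \ref{hulkerinstr}\ref{hulkerinstr2}), subspace of $Spec_h(\mathfrak{A})$ in which $Max(\mathfrak{A})$ is dense and is exactly the set of maximal elements. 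Consequently the whole theorem is the \emph{localization to $h(Rad(\mathfrak{A}))_h$} of the Gelfand theorems \ref{contchar}, \ref{pmprop}, \ref{p2mprop}, \ref{gelnor} and \ref{gelsnor}: one reruns their proofs, replacing $Spec_h(\mathfrak{A})$ by $h(Rad(\mathfrak{A}))_h$ and the constant $1$ (equivalently the filter $\{1\}$) by $Rad(\mathfrak{A})$. Equivalently, via the order- and topology-preserving correspondence between filters of $\mathfrak{A}$ containing $Rad(\mathfrak{A})$ and filters of the semisimple quotient $\mathfrak{A}/Rad(\mathfrak{A})$, the six conditions are those five theorems applied to $\mathfrak{A}/Rad(\mathfrak{A})$. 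The plan is to isolate the arithmetic core \ref{1hausnorm2}$\Leftrightarrow$\ref{1hausnorm6}$\Leftrightarrow$\ref{1hausnorm1}, and then attach the three topological reformulations \ref{1hausnorm3}, \ref{1hausnorm4}, \ref{1hausnorm5} to \ref{1hausnorm2}.

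For \ref{1hausnorm2}$\Rightarrow$\ref{1hausnorm6} I would replay Theorem \ref{contchar}\ref{contchar1}$\Rightarrow$\ref{contchar2}. Given $a\odot b=0$, the set $S=\{\neg a^{m}\vee \neg b^{n}\mid m,n\in \mathds{N}\cup\{0\}\}$ is $\vee$-closed and (taking $a^{0}=b^{0}=1$) contains each $\neg a^{m}$ and each $\neg b^{n}$. If $S\cap Rad(\mathfrak{A})=\emptyset$, Theorem \ref{prfilth} applied to the filter $Rad(\mathfrak{A})$ and the $\vee$-closed set $S$ produces a prime $\mathfrak{p}\supseteq Rad(\mathfrak{A})$, hence $\mathfrak{p}\in h(Rad(\mathfrak{A}))$, maximal with respect to missing $S$; exactly as in Theorem \ref{contchar} the filters $\mathscr{F}(\mathfrak{p},a)$ and $\mathscr{F}(\mathfrak{p},b)$ are proper and lie in distinct maximal filters (distinct since $a\odot b=0$), so $\mathfrak{p}$ sits under two maximal filters, contradicting \ref{1hausnorm2}. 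Thus some $\neg a^{m}\vee \neg b^{n}\in Rad(\mathfrak{A})$. For \ref{1hausnorm6}$\Rightarrow$\ref{1hausnorm1} and \ref{1hausnorm1}$\Rightarrow$\ref{1hausnorm2}: if $\mathfrak{m}\neq \mathfrak{n}$ are maximal, comaximality and Proposition \ref{compropo} give $f\in \mathfrak{m}$, $g\in \mathfrak{n}$ with $f\odot g=0$, and \ref{1hausnorm6} yields $\neg f^{n}\vee \neg g^{m}\in Rad(\mathfrak{A})$; then $d(\neg f^{n})\cap Max(\mathfrak{A})$ and $d(\neg g^{m})\cap Max(\mathfrak{A})$ are neighbourhoods of $\mathfrak{m}$ and $\mathfrak{n}$, disjoint because $\neg f^{n}\vee \neg g^{m}$ lies in every (prime) maximal filter. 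Conversely, if \ref{1hausnorm1} holds and $\mathfrak{p}\in h(Rad(\mathfrak{A}))$ were contained in distinct maximal filters $\mathfrak{m},\mathfrak{n}$, separating them by basic opens $d(a)\cap Max(\mathfrak{A})\ni \mathfrak{m}$ and $d(b)\cap Max(\mathfrak{A})\ni \mathfrak{n}$ forces $a\vee b$ into every maximal filter, i.e.\ $a\vee b\in Rad(\mathfrak{A})\subseteq \mathfrak{p}$; primeness of $\mathfrak{p}$ then puts $a\in \mathfrak{m}$ or $b\in \mathfrak{n}$, contradicting $a\notin \mathfrak{m}$, $b\notin \mathfrak{n}$. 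This closes \ref{1hausnorm1}$\Leftrightarrow$\ref{1hausnorm2}$\Leftrightarrow$\ref{1hausnorm6}.

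For the topological reformulations I would attach each of \ref{1hausnorm3}, \ref{1hausnorm4}, \ref{1hausnorm5} to \ref{1hausnorm2} by the same localization. Condition \ref{1hausnorm5} is Theorem \ref{p2mprop}\ref{p2mprop3} relative to $h(Rad(\mathfrak{A}))_h$: since $h(Rad(\mathfrak{A}))$ is closed in $Spec_h(\mathfrak{A})$, the subspace closure of $\mathscr{G}(\mathfrak{m})\cap h(Rad(\mathfrak{A}))$ is computed through $hk$ (Lemma \ref{retractlemma}\ref{retractlemma1}) and Proposition \ref{omegprop}\ref{omegprop2}, and \ref{1hausnorm2} is exactly the statement that this set is $\mathscr{S}$-stable within $h(Rad(\mathfrak{A}))$, so Theorem \ref{closefalzai} applies. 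Condition \ref{1hausnorm3} mirrors Theorem \ref{gelnor}: under \ref{1hausnorm2} the assignment $\mathfrak{p}\mapsto \mathfrak{m_{p}}$ (the unique maximal filter above $\mathfrak{p}$) is a well-defined retraction $h(Rad(\mathfrak{A}))_h\to Max_h(\mathfrak{A})$, its continuity being established as in Theorem \ref{gelnor}\ref{gelnor1}$\Rightarrow$\ref{gelnor2} — one checks the preimage of a basic closed set is closed, noting that the prime produced by Theorem \ref{prfilth} there still contains $Rad(\mathfrak{A})$, so the argument never leaves $h(Rad(\mathfrak{A}))$; the converse reproduces Theorem \ref{gelnor}\ref{gelnor2}$\Rightarrow$\ref{gelnor1}. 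Finally \ref{1hausnorm4} follows from \ref{1hausnorm1} and \ref{1hausnorm3}: $Max_h(\mathfrak{A})$ is compact (Proposition \ref{hulkerinstr}\ref{hulkerinstr2}) and Hausdorff by \ref{1hausnorm1}, hence $T_4$, so the retraction of \ref{1hausnorm3} — a continuous map from the compact space $h(Rad(\mathfrak{A}))_h$ onto a Hausdorff space, therefore closed — carries normality up to $h(Rad(\mathfrak{A}))_h$ exactly as in Theorem \ref{gelsnor}\ref{gelsnor1}$\Rightarrow$\ref{gelsnor2}; the converse copies Theorem \ref{gelsnor}\ref{gelsnor2}$\Rightarrow$\ref{gelsnor1}, using that each singleton $\{\mathfrak{m}\}$ is $h$-closed in $h(Rad(\mathfrak{A}))_h$.

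The only genuine obstacle is topological bookkeeping rather than new ideas: one must verify carefully that restricting to the \emph{closed} subspace $h(Rad(\mathfrak{A}))_h$ preserves compactness and the closure identities $cl_h=hk$, $cl_d=\mathscr{G}$ of Lemma \ref{retractlemma}, so that the patch-closed-plus-$\mathscr{S}$-stable criterion of Theorem \ref{closefalzai} can legitimately be invoked \emph{relative to} $h(Rad(\mathfrak{A}))$ in \ref{1hausnorm5}, and that the retraction in \ref{1hausnorm3} stays inside $h(Rad(\mathfrak{A}))$ throughout the continuity argument. Once these subspace identifications are in place, every step is the verbatim localization of an already-proved Gelfand theorem.
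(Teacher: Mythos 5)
You cannot be checked against this paper's own proof for a simple reason: Theorem \ref{1hausnorm} carries no proof here at all — it is imported verbatim, with a citation, from \citep{rasouli2018hull}. So your proposal can only be judged on its own merits, and on those terms it is sound. The central observation — that $k(Max(\mathfrak{A}))=Rad(\mathfrak{A})$, hence $cl_{h}(Max(\mathfrak{A}))=hk(Max(\mathfrak{A}))=h(Rad(\mathfrak{A}))$ by Lemma \ref{retractlemma}\ref{retractlemma1}, so that $h(Rad(\mathfrak{A}))_{h}$ is a compact closed subspace of $Spec_{h}(\mathfrak{A})$ on which the Gelfand machinery of Theorems \ref{contchar}, \ref{pmprop}, \ref{p2mprop}, \ref{gelnor} and \ref{gelsnor} can be rerun with $\{1\}$ replaced by $Rad(\mathfrak{A})$ — is exactly what makes the six conditions cohere, and it matches how this paper actually uses the theorem: Theorem \ref{softgelfandequiv} combines it with Theorems \ref{gelsnor} and \ref{hulkerinstr}\ref{hulkerinstr1} in precisely this relative spirit. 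Your core cycle \ref{1hausnorm2}$\Rightarrow$\ref{1hausnorm6}$\Rightarrow$\ref{1hausnorm1}$\Rightarrow$\ref{1hausnorm2} is correct in detail: the prime produced by Theorem \ref{prfilth} applied to the filter $Rad(\mathfrak{A})$ automatically lies in $h(Rad(\mathfrak{A}))$; $\neg f^{n}\notin\mathfrak{m}$ because $f^{n}\odot\neg f^{n}=0$; and both separation arguments run on primeness of maximal filters together with $Rad(\mathfrak{A})\subseteq\mathfrak{p}$. The attachments of \ref{1hausnorm3}, \ref{1hausnorm4}, \ref{1hausnorm5} to \ref{1hausnorm2} also localize correctly, because $h(Rad(\mathfrak{A}))$ is closed in both the hull-kernel and the patch topology and is $\mathscr{S}$-stable, so subspace closures are global closures intersected with it, closed subsets of the subspace stay $\mathscr{S}$-stable, and Theorem \ref{closefalzai} may legitimately be invoked relative to it — the bookkeeping you flag at the end is genuinely all that is needed.

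Two small points of precision. First, your parenthetical claim that the six conditions \emph{are} the five Gelfand theorems applied to $\mathfrak{A}/Rad(\mathfrak{A})$ is not quite verbatim for \ref{1hausnorm6}: Contessa's condition in the quotient quantifies over pairs with $\bar{a}\odot\bar{b}=\bar{0}$, i.e. $\neg(a\odot b)\in Rad(\mathfrak{A})$, a strictly larger class of pairs than those with $a\odot b=0$; so the quotient route needs one extra (easy) reduction, whereas your direct localization of Theorem \ref{contchar} avoids the issue — keep that as the actual argument. Second, in \ref{1hausnorm1}$\wedge$\ref{1hausnorm3}$\Rightarrow$\ref{1hausnorm4} the disjointness of $f(C_{1})$ and $f(C_{2})$ is not automatic for a closed map: it uses that closed subsets of $h(Rad(\mathfrak{A}))_{h}$ are $\mathscr{S}$-stable, so that $f(\mathfrak{p})\in\mathscr{S}(\mathfrak{p})$ lands back in $C_{i}$; the proof of Theorem \ref{gelsnor} passes over this silently as well, and it is worth saying explicitly in your write-up.
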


The following theorem characterizes the class of soft residuated lattices.
\begin{theorem}\label{softgelfandequiv}
  Let $\mathfrak{A}$ be a residuated lattice. The following assertions are equivalent:
\begin{enumerate}
   \item [(1) \namedlabel{softgelfandequiv1}{(1)}] $\mathfrak{A}$ is soft;
   \item [(2) \namedlabel{softgelfandequiv2}{(2)}] $Max_{h}(\mathfrak{A})$ is Hausdorff and dense in $Spec_{h}(\mathfrak{A})$;
   \item [(3) \namedlabel{softgelfandequiv3}{(3)}] $\mathfrak{A}$ is Gelfand and $Rad(\mathfrak{A})=\{1\}$.
 \end{enumerate}
\end{theorem}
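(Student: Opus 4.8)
The plan is to assemble the statement from two results already in hand. Theorem \ref{1hausnorm} ties the Hausdorffness of $Max_{h}(\mathfrak{A})$ to the condition that every element of $h(Rad(\mathfrak{A}))$ lies in a unique maximal filter (its clauses \ref{1hausnorm1}$\Leftrightarrow$\ref{1hausnorm2}), while Proposition \ref{hulkerinstr}\ref{hulkerinstr1} identifies semisimplicity with the density of $Max(\mathfrak{A})$ in $Spec_{h}(\mathfrak{A})$. The single elementary fact that glues these to the Gelfand property is this: when $\mathfrak{A}$ is semisimple, i.e. $Rad(\mathfrak{A})=\{1\}$, every filter contains $1$, so $h(Rad(\mathfrak{A}))=h(\{1\})=Spec(\mathfrak{A})$. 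Thus under semisimplicity the softness clause ``every element of $h(Rad(\mathfrak{A}))$ lies in a unique maximal filter'' becomes verbatim the Gelfand condition.

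I would establish \ref{softgelfandequiv1}$\Leftrightarrow$\ref{softgelfandequiv3} first, as it is essentially a definitional unwinding. By definition $\mathfrak{A}$ is soft exactly when it is semisimple and every element of $h(Rad(\mathfrak{A}))$ is contained in a unique maximal filter. Semisimplicity is $Rad(\mathfrak{A})=\{1\}$, whence $h(Rad(\mathfrak{A}))=Spec(\mathfrak{A})$ by the observation above, so the second clause reads ``every prime filter is contained in a unique maximal filter,'' which is precisely the Gelfand property. This gives soft $\Leftrightarrow$ (Gelfand and $Rad(\mathfrak{A})=\{1\}$), that is, \ref{softgelfandequiv1}$\Leftrightarrow$\ref{softgelfandequiv3}.

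It then remains to fold \ref{softgelfandequiv2} into the equivalence, and I would prove \ref{softgelfandequiv2}$\Leftrightarrow$\ref{softgelfandequiv3}. For \ref{softgelfandequiv3}$\Rightarrow$\ref{softgelfandequiv2}: $Rad(\mathfrak{A})=\{1\}$ is semisimplicity, so $Max(\mathfrak{A})$ is dense in $Spec_{h}(\mathfrak{A})$ by Proposition \ref{hulkerinstr}\ref{hulkerinstr1}, and by \textsc{Remark}~\ref{gelmaxhau} the Gelfand hypothesis makes $Max_{h}(\mathfrak{A})$ a $T_{4}$ space, hence Hausdorff. For \ref{softgelfandequiv2}$\Rightarrow$\ref{softgelfandequiv3}: density of $Max(\mathfrak{A})$ in $Spec_{h}(\mathfrak{A})$ returns semisimplicity $Rad(\mathfrak{A})=\{1\}$ by the same proposition, and Hausdorffness of $Max_{h}(\mathfrak{A})$ yields, via \ref{1hausnorm1}$\Rightarrow$\ref{1hausnorm2} of Theorem \ref{1hausnorm}, that every element of $h(Rad(\mathfrak{A}))$ lies in a unique maximal filter; since $h(Rad(\mathfrak{A}))=Spec(\mathfrak{A})$ this is the Gelfand property.

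I do not expect a genuine obstacle: the argument is bookkeeping that assembles three prior results. The one point requiring care is the order of invocation in the direction drawn from Theorem \ref{1hausnorm}, whose equivalences are stated for the fixed subspace $h(Rad(\mathfrak{A}))$. One must deploy the identity $h(Rad(\mathfrak{A}))=Spec(\mathfrak{A})$ only after semisimplicity has been secured, never assuming it at the outset. Once that ordering is respected, all three conditions collapse onto ``Gelfand together with trivial radical.''
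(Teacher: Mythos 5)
Your proposal is correct and follows essentially the same route as the paper, which proves the theorem by assembling Proposition \ref{hulkerinstr}\ref{hulkerinstr1} (density $\Leftrightarrow$ semisimplicity) with Theorem \ref{1hausnorm} and the fact that Gelfand implies $Max_{h}(\mathfrak{A})$ is Hausdorff; your key gluing observation that semisimplicity forces $h(Rad(\mathfrak{A}))=Spec(\mathfrak{A})$ is exactly what makes the paper's one-line citation work. The only cosmetic difference is that you obtain Hausdorffness of $Max_{h}(\mathfrak{A})$ from \textsc{Remark}~\ref{gelmaxhau} rather than from Theorem \ref{gelsnor}, which the paper cites for the same purpose.
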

\begin{proof}
It is straightforward by Proposition \ref{hulkerinstr}\ref{hulkerinstr1} and Theorems \ref{gelsnor} \& \ref{1hausnorm}.
\end{proof}
\section{The pure spectrum of a Gelfand residuated lattice}\label{sec4}

This section deals with the pure spectrum of a Gelfand residuated lattice.  For the basic facts concerning pure filters of a residuated lattice, the reader is referred to \cite{rasouli2021rickart}.

Let $\mathfrak{A}$ be a residuated lattice. For any filter $F$ of $\mathfrak{A}$, we set $\sigma(F)=k(\mathscr{G}(h(F)))$.
\begin{proposition}\label{unitpro}\cite[Propositions 5.2 \& 5.4]{rasouli2021rickart}
  Let $\mathfrak{A}$ be a residuated lattice. The following assertions hold:
\begin{enumerate}
\item  [$(1)$ \namedlabel{unitpro1}{$(1)$}] $\sigma(F)=\{a\in A\mid F\veebar a^{\perp}=A\}$, for any $F\in \mathscr{F}(\mathfrak{A})$;
\item  [$(2)$ \namedlabel{unitpro2}{$(2)$}] $\sigma(F)$ is a filter of $\mathfrak{A}$ contained in $F$, for any $F\in \mathscr{F}(\mathfrak{A})$;
\item  [$(3)$ \namedlabel{unitpro3}{$(3)$}] $F\subseteq G$ implies $\sigma(F)\subseteq \sigma(G)$, for any $F,G\in \mathscr{F}(\mathfrak{A})$;
\item  [$(4)$ \namedlabel{unitpro4}{$(4)$}] $\sigma(\mathfrak{p})\subseteq D(\mathfrak{p})$, for any $\mathfrak{p}\in Spec(\mathfrak{A})$;
\item  [$(5)$ \namedlabel{unitpro5}{$(5)$}] $\sigma(\mathfrak{m})=D(\mathfrak{m})$, for any $\mathfrak{m}\in Max(\mathfrak{A})$;
\item  [$(6)$ \namedlabel{unitpro6}{$(6)$}] $\sigma(F\cap G)=\sigma(F)\cap \sigma(G)$, for any $F,G\in \mathscr{F}(\mathfrak{A})$;
\item  [$(7)$ \namedlabel{unitpro7}{$(7)$}] $\veebar_{F\in \mathcal{F}}\sigma(F)\subseteq \sigma(\veebar \mathcal{F})$, for any $\mathcal{F}\subseteq \mathscr{F}(\mathfrak{A})$.
\end{enumerate}
\end{proposition}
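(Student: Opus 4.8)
The plan is to route everything through a single structural identity,
\[\sigma(F)=\bigcap_{\mathfrak{p}\in h(F)}D(\mathfrak{p}),\]
which I would establish first. It holds because $\mathscr{G}$ commutes with unions, so $\mathscr{G}(h(F))=\bigcup_{\mathfrak{p}\in h(F)}\mathscr{G}(\mathfrak{p})$, and $k$ sends this union of prime collections to the intersection $\bigcap_{\mathfrak{p}\in h(F)}k(\mathscr{G}(\mathfrak{p}))$; now Proposition~\ref{omegprop}\ref{omegprop2} rewrites $k(\mathscr{G}(\mathfrak{p}))$ as $D(\mathfrak{p})$. Alongside this I would record two element-wise facts. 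First, $a^{\perp}=kd(a)=\{b\in A\mid a\vee b=1\}$: the inclusion $\supseteq$ is immediate from primeness, while $\subseteq$ uses Theorem~\ref{prfilth} on the $\vee$-closed set $\{a\vee b\}$ to produce, whenever $a\vee b\neq 1$, a prime avoiding $a\vee b$ and hence avoiding both $a$ and $b$. Second, since $D(\mathfrak{p})=\omega(\dot{\mathfrak{p}})$, we have $a\in D(\mathfrak{p})$ iff some $x\notin\mathfrak{p}$ satisfies $a\vee x=1$, i.e.\ iff $a^{\perp}\not\subseteq\mathfrak{p}$.

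With these in hand, part \ref{unitpro1} is quick: $a\in\sigma(F)$ iff $a\in D(\mathfrak{p})$ for every $\mathfrak{p}\in h(F)$, iff no $\mathfrak{p}\in h(F)$ has $a^{\perp}\subseteq\mathfrak{p}$, iff $h(F)\cap h(a^{\perp})=h(F\veebar a^{\perp})=\emptyset$, iff $F\veebar a^{\perp}=A$. Parts \ref{unitpro3}, \ref{unitpro4}, \ref{unitpro5} then fall straight out of the displayed identity: \ref{unitpro3} because $F\subseteq G$ reverses to $h(G)\subseteq h(F)$ and thus enlarges the intersection; \ref{unitpro4} because $\mathfrak{p}\in h(\mathfrak{p})$ forces $\sigma(\mathfrak{p})\subseteq D(\mathfrak{p})$; and \ref{unitpro5} because maximality gives $h(\mathfrak{m})=\{\mathfrak{m}\}$, leaving the single term $D(\mathfrak{m})$. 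For \ref{unitpro6} I would use that $h$ sends $\cap$ to $\cup$ on prime filters, $h(F\cap G)=h(F)\cup h(G)$ (the nontrivial inclusion uses $f\vee g\in F\cap G$ for $f\in F$, $g\in G$ together with primeness), so that $\mathscr{G}$ and then $k$ convert the union back into the intersection $\sigma(F)\cap\sigma(G)$; alternatively one argues from \ref{unitpro1} using frame distributivity of $\mathscr{F}(\mathfrak{A})$. Finally \ref{unitpro7} is just monotonicity \ref{unitpro3}: each $\sigma(F)\subseteq\sigma(\veebar\mathcal{F})$, and the right-hand side is a filter, so it contains the join of the $\sigma(F)$.

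The one step that needs genuine work is the containment $\sigma(F)\subseteq F$ in part \ref{unitpro2}. The naive route stalls: the identity only gives $\sigma(F)\subseteq\bigcap_{\mathfrak{p}\in h(F)}\mathfrak{p}=k(h(F))$, and $k(h(F))$ typically contains $F$ strictly, since a filter need not be an intersection of primes. Instead I would argue element-wise from \ref{unitpro1}: if $a\in\sigma(F)$ then $F\veebar a^{\perp}=A$, so Proposition~\ref{compropo} yields $f\in F$ and $g\in a^{\perp}$ with $f\odot g=0$, where $g\in a^{\perp}$ means $a\vee g=1$. Applying \ref{res2} of Remark~\ref{resproposition} with $x=a$, $y=f$, $z=g$ gives
\[a=a\vee(f\odot g)\geq(a\vee f)\odot(a\vee g)=(a\vee f)\odot 1=a\vee f\geq f,\]
so $a\geq f\in F$ and upward-closedness of $F$ forces $a\in F$. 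That $\sigma(F)$ is itself a filter is then immediate from the displayed identity, being an intersection of the $\omega$-filters $D(\mathfrak{p})$. Thus the substantive ingredients are confined to the element-wise description of $a^{\perp}$ (needing Theorem~\ref{prfilth}) and this single $r_{2}$-computation; everything else is bookkeeping with $h$, $k$, $\mathscr{G}$, and the displayed identity.
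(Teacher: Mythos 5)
Your proposal is correct, but there is nothing in the paper to compare it against: Proposition \ref{unitpro} is imported without proof from \citet[Propositions 5.2 \& 5.4]{rasouli2021rickart}, so your argument is a genuine self-contained reconstruction from the ingredients available in this paper. Judged on its own terms it holds up. The backbone identity $\sigma(F)=\bigcap_{\mathfrak{p}\in h(F)}D(\mathfrak{p})$ follows exactly as you say, since $\mathscr{G}$ and $k$ convert the union $h(F)=\bigcup_{\mathfrak{p}\in h(F)}\{\mathfrak{p}\}$ into an intersection and Proposition \ref{omegprop}\ref{omegprop2} identifies each term; your two element-wise facts, $a^{\perp}=\{b\in A\mid a\vee b=1\}$ (via Theorem \ref{prfilth} applied to the $\vee$-closed singleton $\{a\vee b\}$ and the filter $\{1\}$) and $a\in D(\mathfrak{p})\iff a^{\perp}\nsubseteq\mathfrak{p}$, are both right; and parts \ref{unitpro1}, \ref{unitpro3}--\ref{unitpro7} then reduce to the bookkeeping you describe (for \ref{unitpro1} one also needs $h(G)=\emptyset\iff G=A$, which you implicitly use and which follows from Zorn's lemma and $Max(\mathfrak{A})\subseteq Spec(\mathfrak{A})$). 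You are also right that $\sigma(F)\subseteq F$ is the only step requiring a new idea, and your $r_{2}$ computation $a=a\vee(f\odot g)\geq(a\vee f)\odot(a\vee g)=a\vee f\geq f$ closes it correctly. Two cosmetic caveats: Proposition \ref{compropo} is stated for \emph{proper} filters, whereas in your application $F$ or $a^{\perp}$ could be all of $A$; but those cases are trivial ($F=A$ gives $a\in F$ outright, and $a^{\perp}=A$ forces $a=1\in F$), and the implication \ref{compropo1}$\Rightarrow$\ref{compropo2} in fact needs no properness. Likewise your reading of $D(\mathfrak{p})=\omega(\dot{\mathfrak{p}})$ takes $\dot{\mathfrak{p}}$ to be the complement ideal $A\setminus\mathfrak{p}$, which the paper never defines explicitly but is the only reading consistent with its usage.
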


Let $\mathfrak{A}$ be a residuated lattice. A filter $F$ of $\mathfrak{A}$ is called \textit{pure} provided that $\sigma(F)=F$. The set of pure filters of $\mathfrak{A}$ is denoted by $\sigma(\mathfrak{A})$. It is obvious that $\{1\},A\in \sigma(\mathfrak{A})$.
\begin{proposition}\label{sigmfiltlatt}\cite[Theorem 5.7]{rasouli2021rickart}
   Let $\mathfrak{A}$ be a residuated lattice. $(\sigma(\mathfrak{A});\cap,\veebar)$ is a frame
\end{proposition}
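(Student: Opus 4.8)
The plan is to realize $(\sigma(\mathfrak{A});\cap,\veebar)$ as a sub-poset of the complete frame $(\mathscr{F}(\mathfrak{A});\cap,\veebar,\{1\},A)$, ordered by inclusion, and to show that the pure filters are closed under finite intersections and under arbitrary $\veebar$-joins, with \emph{both} operations computed exactly as in the ambient frame. Once this is in place, completeness is automatic, and the join infinite distributive law will be inherited verbatim from $\mathscr{F}(\mathfrak{A})$, so no distributivity needs to be proved directly inside $\sigma(\mathfrak{A})$.

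First I would verify closure under finite meets. If $F,G\in\sigma(\mathfrak{A})$, then Proposition \ref{unitpro}\ref{unitpro6} gives $\sigma(F\cap G)=\sigma(F)\cap\sigma(G)=F\cap G$, so $F\cap G$ is again pure; thus $\cap$ is the meet in $\sigma(\mathfrak{A})$ and coincides with the meet of $\mathscr{F}(\mathfrak{A})$. Next I would verify closure under arbitrary joins. For $\mathcal{F}\subseteq\sigma(\mathfrak{A})$, Proposition \ref{unitpro}\ref{unitpro2} yields $\sigma(\veebar\mathcal{F})\subseteq\veebar\mathcal{F}$, while Proposition \ref{unitpro}\ref{unitpro7} gives $\veebar\mathcal{F}=\veebar_{F\in\mathcal{F}}\sigma(F)\subseteq\sigma(\veebar\mathcal{F})$, the first equality using that every $F\in\mathcal{F}$ is pure. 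Combining the two inclusions, $\sigma(\veebar\mathcal{F})=\veebar\mathcal{F}$, so $\veebar\mathcal{F}\in\sigma(\mathfrak{A})$ and the join in $\sigma(\mathfrak{A})$ agrees with the ambient $\veebar$. Since $\{1\},A\in\sigma(\mathfrak{A})$ and $\sigma(\mathfrak{A})$ is closed under arbitrary joins, it is a complete lattice (arbitrary meets existing as joins of lower bounds).

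Finally, the frame law is obtained by transfer. For $F\in\sigma(\mathfrak{A})$ and $\mathcal{G}\subseteq\sigma(\mathfrak{A})$, both $F\cap\veebar\mathcal{G}$ and $\veebar_{G\in\mathcal{G}}(F\cap G)$ are formed using precisely the operations of $\mathscr{F}(\mathfrak{A})$ (binary meet $=\cap$, arbitrary join $=\veebar$), by the two closure facts just established. Because $\mathscr{F}(\mathfrak{A})$ is a complete frame, the identity $F\cap\veebar\mathcal{G}=\veebar_{G\in\mathcal{G}}(F\cap G)$ already holds in $\mathscr{F}(\mathfrak{A})$, and hence holds in $\sigma(\mathfrak{A})$. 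This shows $(\sigma(\mathfrak{A});\cap,\veebar)$ satisfies (JID) and is therefore a frame.

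I expect the one delicate point to be the closure under arbitrary joins, which is exactly where Proposition \ref{unitpro}\ref{unitpro7} is indispensable: it upgrades the always-valid inclusion $\veebar_{F}\sigma(F)\subseteq\sigma(\veebar\mathcal{F})$ to an \emph{equality} on pure filters, guaranteeing that $\veebar\mathcal{F}$ is not "corrected" downward by $\sigma$. Were this to fail, the join computed inside $\sigma(\mathfrak{A})$ would be $\sigma(\veebar\mathcal{F})$ rather than $\veebar\mathcal{F}$ itself, the two lattices would no longer share their joins, and the clean transfer of (JID) from the ambient frame would break down.
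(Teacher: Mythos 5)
Your proposal is correct. Note that the paper itself offers no proof of this proposition --- it is quoted from \cite[Theorem 5.7]{rasouli2021rickart} --- so there is no in-paper argument to compare against; your derivation is a valid, self-contained reconstruction from the properties of $\sigma$ listed in Proposition \ref{unitpro}. The two key closure facts are exactly right: \ref{unitpro6} gives closure of $\sigma(\mathfrak{A})$ under binary $\cap$, and the combination of \ref{unitpro2} with \ref{unitpro7} (applied to a family of pure filters, so that $\veebar_{F\in\mathcal{F}}\sigma(F)=\veebar\mathcal{F}$) squeezes $\veebar\mathcal{F}\subseteq\sigma(\veebar\mathcal{F})\subseteq\veebar\mathcal{F}$, showing closure under arbitrary ambient joins; once both operations of $\sigma(\mathfrak{A})$ are the restrictions of those of the complete frame $\mathscr{F}(\mathfrak{A})$, the (JID) identity transfers verbatim, as you say. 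Your closing remark correctly identifies the load-bearing step: without the equality $\sigma(\veebar\mathcal{F})=\veebar\mathcal{F}$ the joins of the two lattices would diverge and the transfer argument would collapse.
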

The following result generalized and improved \cite[Theorem 1.8]{al1989pure} to residuated lattices.
\begin{proposition}\label{pureinterd}
  Let $\mathfrak{A}$ be a residuated lattice and $F$ a pure filter of $\mathfrak{A}$. We have
  \[F=\bigcap\{D(\mathfrak{m})\mid \mathfrak{m}\in Max(\mathfrak{A})\cap h(F)\}.\]
\end{proposition}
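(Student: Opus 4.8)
The plan is to prove, for an \emph{arbitrary} filter $F$, the identity
\[
\sigma(F)=\bigcap\{D(\mathfrak{m})\mid \mathfrak{m}\in Max(\mathfrak{A})\cap h(F)\},
\]
and then to close the argument by invoking purity, $\sigma(F)=F$. Everything is driven by the two operators $\mathscr{G}$ (generalization) and $k=\bigcap$. By the definition of $\sigma$ we have $\sigma(F)=k(\mathscr{G}(h(F)))$, while Proposition \ref{omegprop}\ref{omegprop2} gives $D(\mathfrak{m})=k(\mathscr{G}(\mathfrak{m}))$ for each maximal (hence prime) filter $\mathfrak{m}$. Since $k$ sends unions to intersections, $\bigcap_{\mathfrak{m}}D(\mathfrak{m})=k\big(\bigcup_{\mathfrak{m}\in Max(\mathfrak{A})\cap h(F)}\mathscr{G}(\mathfrak{m})\big)$. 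Thus the whole statement reduces to the single set-theoretic identity inside $Spec(\mathfrak{A})$:
\[
\mathscr{G}(h(F))=\bigcup_{\mathfrak{m}\in Max(\mathfrak{A})\cap h(F)}\mathscr{G}(\mathfrak{m}).
\]

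First I would dispatch the inclusion $\supseteq$, which is immediate: each $\mathfrak{m}\in Max(\mathfrak{A})\cap h(F)$ is a prime filter lying in $h(F)$, so monotonicity of $\mathscr{G}$ gives $\mathscr{G}(\mathfrak{m})\subseteq \mathscr{G}(h(F))$. For $\subseteq$, take $\mathfrak{q}\in \mathscr{G}(h(F))$, so $\mathfrak{q}\subseteq \mathfrak{p}$ for some prime $\mathfrak{p}\supseteq F$. By Zorn's lemma $\mathfrak{p}$ is contained in some maximal filter $\mathfrak{m}$; since $F\subseteq \mathfrak{p}\subseteq \mathfrak{m}$ we get $\mathfrak{m}\in Max(\mathfrak{A})\cap h(F)$, and $\mathfrak{q}\subseteq \mathfrak{p}\subseteq \mathfrak{m}$ gives $\mathfrak{q}\in \mathscr{G}(\mathfrak{m})$. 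This step is the crux of the whole proof---the only point where one passes from arbitrary primes above $F$ to \emph{maximal} filters above $F$---and it is exactly what the existence of maximal filters (Zorn) provides. (One may package it as $\mathscr{G}(h(F))=\mathscr{G}(\mathscr{G}(M_F))=\mathscr{G}(M_F)$ using idempotency of the closure operator $\mathscr{G}$, where $M_F=Max(\mathfrak{A})\cap h(F)$.)

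Combining the two displays yields the general identity $\sigma(F)=\bigcap\{D(\mathfrak{m})\mid \mathfrak{m}\in Max(\mathfrak{A})\cap h(F)\}$, and the hypothesis that $F$ is pure, i.e.\ $\sigma(F)=F$, delivers the claimed equality. The boundary cases cause no trouble and are subsumed by the general argument: when $F=A$ the index set $Max(\mathfrak{A})\cap h(F)$ is empty and both sides equal $A$ under the usual convention $\bigcap\emptyset=A$. I do not anticipate any serious obstacle; the only genuine subtlety is to establish the \emph{equality} $\mathscr{G}(h(F))=\bigcup_{\mathfrak{m}}\mathscr{G}(\mathfrak{m})$ rather than settling for a one-sided inclusion, and this is precisely where the existence of a maximal filter above each prime containing $F$ is used.
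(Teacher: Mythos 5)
Your proof is correct, but it takes a genuinely different route from the paper's. The paper fixes the pure filter $F$ and argues element-wise through the coannihilator description of $\sigma$: the inclusion $F\subseteq\bigcap\Sigma$ comes from monotonicity of $\sigma$ together with $\sigma(\mathfrak{m})=D(\mathfrak{m})$ (Proposition \ref{unitpro}\ref{unitpro3},\ref{unitpro5}), and the reverse inclusion is by contradiction: if $a\in\bigcap\Sigma\setminus F$, then purity and Proposition \ref{unitpro}\ref{unitpro1} give that $F\veebar a^{\perp}$ is proper, hence lies in some maximal $\mathfrak{m}\supseteq F$; but then $\mathfrak{m}\veebar a^{\perp}=\mathfrak{m}\neq A$, so $a\notin\sigma(\mathfrak{m})=D(\mathfrak{m})$, contradicting $a\in\bigcap\Sigma$. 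Purity is thus woven into both halves of the paper's argument. You instead work entirely at the level of the spectrum: using only the definition $\sigma(F)=k(\mathscr{G}(h(F)))$, the identity $D(\mathfrak{p})=k(\mathscr{G}(\mathfrak{p}))$ of Proposition \ref{omegprop}\ref{omegprop2}, the fact that $k$ turns unions into intersections, and Zorn's lemma for the key inclusion $\mathscr{G}(h(F))\subseteq\bigcup_{\mathfrak{m}\in Max(\mathfrak{A})\cap h(F)}\mathscr{G}(\mathfrak{m})$. This buys you a strictly stronger statement, namely
\[
\sigma(F)=\bigcap\{D(\mathfrak{m})\mid \mathfrak{m}\in Max(\mathfrak{A})\cap h(F)\}
\]
for \emph{every} filter $F$, with purity invoked only once at the very end; it also avoids the coannihilator machinery of Proposition \ref{unitpro} altogether. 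What the paper's approach buys in exchange is brevity given its toolkit (Proposition \ref{unitpro} is already on the table) and an element-wise style that is reused later, e.g.\ the proof of Theorem \ref{mppurefcl} runs on the same $F\veebar a^{\perp}$ technique. Both arguments are sound within the paper's framework.
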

\begin{proof}
  Let $\Sigma=\{D(\mathfrak{m})\mid \mathfrak{m}\in Max(\mathfrak{A})\cap h(F)\}$. Following Proposition \ref{unitpro}\ref{unitpro5}, we obtain that $F\subseteq \bigcap\Sigma$. Assume by absurdum that $a\in \bigcap\Sigma\setminus F$. This implies that $F\veebar a^{\perp}\subseteq \mathfrak{m}$, for some maximal filter $\mathfrak{m}$ of $\mathfrak{A}$. This concludes that $a\notin \sigma(\mathfrak{m})$; a contradiction.
\end{proof}

\begin{theorem}\label{equgelchaunit}
  Let $\mathfrak{A}$ be a residuated lattice. The following assertions are equivalent:
\begin{enumerate}
\item  [(1) \namedlabel{equgelchaunit1}{(1)}] $\mathfrak{A}$ is Gelfand;
\item  [(2) \namedlabel{equgelchaunit2}{(2)}] $\sigma(F)\subseteq \mathfrak{m}$ implies $F\subseteq \mathfrak{m}$, for any $F\in \mathscr{F}(\mathfrak{A})$ and $\mathfrak{m}\in Max(\mathfrak{A})$;
\item  [(3) \namedlabel{equgelchaunit3}{(3)}] $Max(\mathfrak{A})\cap h(F)=Max(\mathfrak{A})\cap h(\sigma(F))$, for any filter $F$ of $\mathfrak{A}$;
\item  [(4) \namedlabel{equgelchaunit4}{(4)}] $Rad(F)=Rad(\sigma(F))$, for any filter $F$ of $\mathfrak{A}$;
\item  [(5) \namedlabel{equgelchaunit5}{(5)}] if $F$ and $G$ are comaximal filters of $\mathfrak{A}$, then so are $\sigma(F)$ and $\sigma(G)$;
\item  [(6) \namedlabel{equgelchaunit6}{(6)}]  $\veebar_{F\in \mathcal{F}}\sigma(F)=\sigma(\veebar \mathcal{F})$, for any $\mathcal{F}\subseteq \mathscr{F}(\mathfrak{A})$.
\end{enumerate}
\end{theorem}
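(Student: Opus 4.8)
The plan is to establish the equivalences through two cycles that both pass through the Gelfand condition \ref{equgelchaunit1}, namely \ref{equgelchaunit1}$\Rightarrow$\ref{equgelchaunit2}$\Rightarrow$\ref{equgelchaunit3}$\Rightarrow$\ref{equgelchaunit4}$\Rightarrow$\ref{equgelchaunit1} and \ref{equgelchaunit1}$\Rightarrow$\ref{equgelchaunit6}$\Rightarrow$\ref{equgelchaunit5}$\Rightarrow$\ref{equgelchaunit1}. The conceptual heart of the whole statement is the absorption property \ref{equgelchaunit2}: once it is known that $\sigma(F)\subseteq\mathfrak{m}$ forces $F\subseteq\mathfrak{m}$, the remaining items follow with little effort, since $\sigma(F)\subseteq F$ always holds (Proposition \ref{unitpro}\ref{unitpro2}) and the reverse traces on maximal filters are governed precisely by \ref{equgelchaunit2}.

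For the crucial implication \ref{equgelchaunit1}$\Rightarrow$\ref{equgelchaunit2}, I would argue by contradiction at the level of elements. Assuming $\sigma(F)\subseteq\mathfrak{m}$ but $F\not\subseteq\mathfrak{m}$, I pick $a\in F\setminus\mathfrak{m}$. Since $\mathfrak{A}$ is Gelfand, the maximal-filter criterion \ref{pmprop11} of Theorem \ref{pmprop} supplies an integer $n$ and an element $b\notin\mathfrak{m}$ with $b\vee\neg a^{n}=1$. Feeding $b\vee\neg a^{n}=1$ into every prime that omits $b$ shows $\neg a^{n}\in kd(b)=b^{\perp}$; combined with $a^{n}\in F$ and $a^{n}\odot\neg a^{n}=0$, Proposition \ref{genfilprop}\ref{genfilprop4} yields $F\veebar b^{\perp}\supseteq\mathscr{F}(a^{n})\veebar\mathscr{F}(\neg a^{n})=\mathscr{F}(0)=A$. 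Hence $b\in\sigma(F)$ by the description $\sigma(F)=\{a\in A\mid F\veebar a^{\perp}=A\}$ of Proposition \ref{unitpro}\ref{unitpro1}, contradicting $b\notin\mathfrak{m}\supseteq\sigma(F)$. I expect this step to be the main obstacle, as it is the only place where the Gelfand hypothesis is genuinely converted into arithmetic.

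The closers of the first cycle are then routine. \ref{equgelchaunit2}$\Rightarrow$\ref{equgelchaunit3} is immediate: $\sigma(F)\subseteq F$ gives one inclusion of the traces and \ref{equgelchaunit2} gives the other. \ref{equgelchaunit3}$\Rightarrow$\ref{equgelchaunit4} follows at once by intersecting, since $Rad(F)=k\big(Max(\mathfrak{A})\cap h(F)\big)$. For \ref{equgelchaunit4}$\Rightarrow$\ref{equgelchaunit1}, I would suppose a prime $\mathfrak{p}$ sits inside two distinct maximal filters $\mathfrak{m}\neq\mathfrak{n}$ and apply \ref{equgelchaunit4} to $F=\mathfrak{n}$: here $Rad(\mathfrak{n})=\mathfrak{n}$, while $\sigma(\mathfrak{n})=D(\mathfrak{n})$ (Proposition \ref{unitpro}\ref{unitpro5}) and $D(\mathfrak{n})\subseteq\mathfrak{p}\subseteq\mathfrak{m}$, so $Rad(D(\mathfrak{n}))\subseteq\mathfrak{m}$; equality of radicals then forces $\mathfrak{n}\subseteq\mathfrak{m}$, a contradiction.

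For the second cycle, \ref{equgelchaunit6}$\Rightarrow$\ref{equgelchaunit5} is clear from $\sigma(A)=A$, and \ref{equgelchaunit5}$\Rightarrow$\ref{equgelchaunit1} uses that distinct maximal filters are comaximal while $\sigma(\mathfrak{m})\veebar\sigma(\mathfrak{n})=D(\mathfrak{m})\veebar D(\mathfrak{n})\subseteq\mathfrak{p}\neq A$ whenever both contain a common prime $\mathfrak{p}$. The remaining implication \ref{equgelchaunit1}$\Rightarrow$\ref{equgelchaunit6} reduces to the nontrivial inclusion $\sigma(\veebar\mathcal{F})\subseteq\veebar_{F}\sigma(F)$, the reverse being Proposition \ref{unitpro}\ref{unitpro7}. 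Writing $W=\veebar_{F\in\mathcal{F}}\sigma(F)$ and taking $a\in\sigma(\veebar\mathcal{F})\setminus W$, Theorem \ref{prfilth} applied to the $\vee$-closed set $\{a\}$ produces a prime $\mathfrak{p}\supseteq W$ with $a\notin\mathfrak{p}$, whence $a^{\perp}=kd(a)\subseteq\mathfrak{p}\subseteq\mathfrak{m}$ for the unique (by \ref{equgelchaunit1}) maximal filter $\mathfrak{m}\supseteq\mathfrak{p}$. Since each $\sigma(F)\subseteq W\subseteq\mathfrak{m}$, the already-proved item \ref{equgelchaunit2} gives $F\subseteq\mathfrak{m}$ for every $F\in\mathcal{F}$, so $\veebar\mathcal{F}\subseteq\mathfrak{m}$ and therefore $A=(\veebar\mathcal{F})\veebar a^{\perp}\subseteq\mathfrak{m}$, the desired contradiction.
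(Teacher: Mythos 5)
Your proof is correct, but it follows a genuinely different route from the paper's. The paper establishes the single cycle \ref{equgelchaunit1}$\Rightarrow$\ref{equgelchaunit2}$\Rightarrow$\ref{equgelchaunit3}$\Rightarrow$\ref{equgelchaunit4}$\Rightarrow$\ref{equgelchaunit5}$\Rightarrow$\ref{equgelchaunit6}$\Rightarrow$\ref{equgelchaunit1}, whereas you build two cycles through the Gelfand condition and reuse \ref{equgelchaunit2} as a lemma inside \ref{equgelchaunit1}$\Rightarrow$\ref{equgelchaunit6}. The individual arguments also differ at the two hard spots. For \ref{equgelchaunit1}$\Rightarrow$\ref{equgelchaunit2}, the paper invokes Theorem \ref{pmprop}\ref{pmprop7} together with $\sigma(\mathfrak{m})=D(\mathfrak{m})$ to get $F\veebar\sigma(\mathfrak{m})=A$ and then runs a rather terse negation chase; you instead use the element-wise criterion \ref{pmprop11} and a clean coannulet computation ($\neg a^{n}\in b^{\perp}$, so $F\veebar b^{\perp}\supseteq\mathscr{F}(a^{n})\veebar\mathscr{F}(\neg a^{n})=\mathscr{F}(0)=A$ by Proposition \ref{genfilprop}\ref{genfilprop4}), which is more transparent. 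For item \ref{equgelchaunit6}, the paper derives it from \ref{equgelchaunit5} by reducing $(\veebar\mathcal{F})\veebar a^{\perp}=A$ to a finite subfamily and applying \ref{equgelchaunit5} repeatedly; you prove \ref{equgelchaunit1}$\Rightarrow$\ref{equgelchaunit6} directly by separating $a$ from $W=\veebar_{F}\sigma(F)$ with a prime via Theorem \ref{prfilth}, passing to its unique maximal filter, and applying the already-proved \ref{equgelchaunit2}. Finally, your closing implications \ref{equgelchaunit4}$\Rightarrow$\ref{equgelchaunit1} (via $Rad(\mathfrak{n})=\mathfrak{n}$ and $\sigma(\mathfrak{n})=D(\mathfrak{n})=k(\mathscr{G}(\mathfrak{n}))\subseteq\mathfrak{p}$, using Proposition \ref{omegprop}\ref{omegprop2}) and \ref{equgelchaunit5}$\Rightarrow$\ref{equgelchaunit1} do not appear in the paper, which closes only through \ref{equgelchaunit6}$\Rightarrow$\ref{equgelchaunit1}. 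What your decomposition buys is that each block of conditions (\ref{equgelchaunit2}--\ref{equgelchaunit4} and \ref{equgelchaunit5}--\ref{equgelchaunit6}) is anchored to the Gelfand property independently, with every step fully spelled out; what the paper's single cycle buys is economy, needing only six implications, several of which are formal.
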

\begin{proof}
\item [\ref{equgelchaunit1}$\Rightarrow$\ref{equgelchaunit2}:] Let $F$ be a filter of $\mathfrak{A}$ such that $\sigma(F)$ contained in a maximal filter $\mathfrak{m}$ of $\mathfrak{A}$. By absurdum, let $F$ not contained in $\mathfrak{m}$. So $F\veebar \mathfrak{m}=A$, and so by Theorem \ref{pmprop} and Proposition \ref{unitpro}\ref{unitpro5} follows that $F\veebar \sigma(\mathfrak{m})=A$. Hence, there exist $f\in F$ and $a\in \sigma(\mathfrak{m})$ such that $f\odot a=0$. This concludes that $\neg\neg a\in (\neg x)^{\perp}$, for some $x\in \mathfrak{m}$. So $\neg x\in \sigma(F)$; a contradiction.
\item [\ref{equgelchaunit2}$\Rightarrow$\ref{equgelchaunit3}:] It is obvious.
\item [\ref{equgelchaunit3}$\Rightarrow$\ref{equgelchaunit4}:] It is obvious.
\item [\ref{equgelchaunit4}$\Rightarrow$\ref{equgelchaunit5}:] Let $F$ and $G$ be two comaximal filter of $\mathfrak{A}$. Assume by absurdum that $\sigma(F)$ and $\sigma(G)$ are not comaximal. Thus $\sigma(F),\sigma(G)\subseteq \mathfrak{m}$, for some maximal filter $\mathfrak{m}$ of $\mathfrak{A}$. Thus $Rad(\sigma(F)),Rad(\sigma(G))\subseteq \mathfrak{m}$, and this yields that $Rad(F),Rad(G)\subseteq \mathfrak{m}$. This establishes that $F,G\subseteq \mathfrak{m}$; a contradiction.
\item [\ref{equgelchaunit5}$\Rightarrow$\ref{equgelchaunit6}:] Let $\mathcal{F}$ be a family of filters of $\mathfrak{A}$. Consider $a\in \sigma(\veebar \mathcal{F})$. So $(\veebar \mathcal{F})\veebar a^{\perp}=A$. Routinely, one can show that $(\veebar \mathcal{G})\veebar a^{\perp}=A$, for a finite subset $\mathcal{G}$ of $\mathcal{F}$. This results that $(\veebar_{G\in \mathcal{G}}\sigma(G))\veebar a^{\perp}=A$. Therefore, $a\in \sigma(\veebar_{G\in \mathcal{G}}\sigma(G))\subseteq \veebar_{F\in \mathcal{F}}\sigma(F)$.
\item [\ref{equgelchaunit6}$\Rightarrow$\ref{equgelchaunit1}:] It is an immediate consequence of Theorem \ref{pmprop} and Proposition \ref{unitpro}\ref{unitpro5}.
\end{proof}
Let $\mathfrak{A}$ be a residuated lattice. For any filter $F$ of $\mathfrak{A}$, we set
\[\rho(F)=\underline{\bigvee}\{G\in \sigma(\mathfrak{A})\mid G\subseteq F\},\]
and it is called \textit{the pure part of $F$}. Definitely, the pure part of a filter is the largest pure filter contained in it.
\begin{proposition}\label{rfilter}
  Let $\mathfrak{A}$ be a residuated lattice. The following assertions hold:
\begin{enumerate}
\item  [$(1)$ \namedlabel{rfilter1}{$(1)$}] $\rho(F)\subseteq \sigma(F)$, for any filter $F$ of $\mathfrak{A}$;
\item  [$(2)$ \namedlabel{rfilter2}{$(2)$}] The map $\rho$, given by $F\rightsquigarrow \rho(F)$, is a closure operator on $\mathscr{F}(\mathfrak{A})$, with the fixed points $\sigma(\mathfrak{A})$;
\item  [$(3)$ \namedlabel{rfilter3}{$(3)$}] The map $\rho$ is a meet-semilattice homomorphism;
\item  [$(4)$ \namedlabel{rfilter4}{$(4)$}] $F=\bigcap\{\rho(\mathfrak{m})\mid \mathfrak{m}\in Max(\mathfrak{A})\cap h(F)\}$, for any pure filter $F$ of $\mathfrak{A}$;
\item  [$(5)$ \namedlabel{rfilter5}{$(5)$}] $F=\rho(Rad(F))$, for any pure filter $F$ of $\mathfrak{A}$;
\item  [$(6)$ \namedlabel{rfilter6}{$(6)$}] $\rho(\mathfrak{p})=\rho(D(\mathfrak{p}))$, for any prime filter $\mathfrak{p}$ of $\mathfrak{A}$.
\end{enumerate}
\end{proposition}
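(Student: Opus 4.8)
The plan is to first pin down the one structural fact on which every item rests: that the frame join $\underline{\bigvee}$ of a family of pure filters coincides with the ambient join $\veebar$ of $\mathscr{F}(\mathfrak{A})$ and again produces a pure filter. Indeed, for any $\{G_i\}\subseteq\sigma(\mathfrak{A})$, Proposition \ref{unitpro}\ref{unitpro7} gives $\veebar_i G_i=\veebar_i\sigma(G_i)\subseteq\sigma(\veebar_i G_i)$, while Proposition \ref{unitpro}\ref{unitpro2} gives $\sigma(\veebar_i G_i)\subseteq\veebar_i G_i$; hence $\sigma(\veebar_i G_i)=\veebar_i G_i$, so $\veebar_i G_i$ is pure and the two joins agree. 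Consequently $\rho(F)$ is a join of pure filters each contained in $F$, so $\rho(F)\subseteq F$, and $\rho(F)$ is genuinely the largest pure filter contained in $F$. This is the one non-formal input, and I expect it to be the main obstacle; once it is in hand, all six assertions reduce to monotonicity arguments and the identity $\sigma(\mathfrak{m})=D(\mathfrak{m})$.

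Granting this, I would argue \ref{rfilter1} from $\rho(F)\subseteq F$: the monotonicity of $\sigma$ (Proposition \ref{unitpro}\ref{unitpro3}) gives $\sigma(\rho(F))\subseteq\sigma(F)$, and since $\rho(F)$ is pure, $\rho(F)=\sigma(\rho(F))\subseteq\sigma(F)$. For \ref{rfilter2}, monotonicity of $\rho$ is immediate from the definition, $\rho(F)\subseteq F$ was just shown, and idempotency holds because $\rho(F)$ is pure, so the largest pure filter inside $\rho(F)$ is $\rho(F)$ itself; the fixed points are exactly $\sigma(\mathfrak{A})$, as $\rho(F)$ is always pure and $\rho(F)=F$ precisely when $F$ is pure. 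For \ref{rfilter3}, the inclusion $\rho(F\cap G)\subseteq\rho(F)\cap\rho(G)$ is monotonicity, while conversely $\rho(F)\cap\rho(G)$ is a meet of pure filters, hence pure (the meet in the frame $\sigma(\mathfrak{A})$ of Proposition \ref{sigmfiltlatt} is $\cap$, using Proposition \ref{unitpro}\ref{unitpro6}), and it is contained in $F\cap G$, so it is contained in $\rho(F\cap G)$.

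For \ref{rfilter4}, let $F$ be pure. Proposition \ref{pureinterd} gives $F=\bigcap\{D(\mathfrak{m})\mid\mathfrak{m}\in Max(\mathfrak{A})\cap h(F)\}$. By \ref{rfilter1} and Proposition \ref{unitpro}\ref{unitpro5} we have $\rho(\mathfrak{m})\subseteq\sigma(\mathfrak{m})=D(\mathfrak{m})$, whence $\bigcap_{\mathfrak{m}}\rho(\mathfrak{m})\subseteq\bigcap_{\mathfrak{m}}D(\mathfrak{m})=F$; and for each such $\mathfrak{m}$ the pure filter $F$ lies in $\mathfrak{m}$, so $F\subseteq\rho(\mathfrak{m})$, giving $F\subseteq\bigcap_{\mathfrak{m}}\rho(\mathfrak{m})$ and hence equality. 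For \ref{rfilter5}, since $F\subseteq Rad(F)$ and $F$ is pure, $F\subseteq\rho(Rad(F))$; conversely $\rho(Rad(F))$ is a pure filter contained in every $\mathfrak{m}\in Max(\mathfrak{A})\cap h(F)$, so $\rho(Rad(F))\subseteq\rho(\mathfrak{m})$ for each such $\mathfrak{m}$, and \ref{rfilter4} then yields $\rho(Rad(F))\subseteq F$.

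Finally, for \ref{rfilter6}, I would use Proposition \ref{omegprop}\ref{omegprop2} to write $D(\mathfrak{p})=k(\mathscr{G}(\mathfrak{p}))\subseteq\mathfrak{p}$, so monotonicity gives $\rho(D(\mathfrak{p}))\subseteq\rho(\mathfrak{p})$; conversely, by \ref{rfilter1} and Proposition \ref{unitpro}\ref{unitpro4}, $\rho(\mathfrak{p})\subseteq\sigma(\mathfrak{p})\subseteq D(\mathfrak{p})$, and $\rho(\mathfrak{p})$ being pure forces $\rho(\mathfrak{p})\subseteq\rho(D(\mathfrak{p}))$, whence equality. In summary, the only genuinely non-formal step is the coincidence of the frame and ambient joins established at the outset; each of the six items then follows from the monotonicity of $\sigma$ and $\rho$, the identities of Proposition \ref{unitpro}, and the sandwiching argument used for \ref{rfilter4}.
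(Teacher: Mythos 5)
Your proof is correct and follows essentially the same route as the paper's: each item is handled by the same combination of monotonicity of $\sigma$ and $\rho$, Proposition \ref{unitpro}, Proposition \ref{pureinterd}, and the sandwiching via $\rho(\mathfrak{m})\subseteq\sigma(\mathfrak{m})=D(\mathfrak{m})$. The only difference is that you explicitly derive the foundational fact that $\veebar$ of pure filters is again pure (from Proposition \ref{unitpro}\ref{unitpro2} and \ref{unitpro}\ref{unitpro7}), so that $\rho(F)$ really is the largest pure filter inside $F$ --- a fact the paper simply absorbs from the frame structure of Proposition \ref{sigmfiltlatt}; this makes your write-up more self-contained but does not change the argument.
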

\begin{proof}
\item [\ref{rfilter1}:] It follows by Proposition \ref{unitpro}\ref{unitpro3}.
\item [\ref{rfilter2}:] It is obvious.
\item [\ref{rfilter3}:] Let $F_1,F_2\in \mathscr{F}(\mathfrak{A})$. So $\rho(F_1\cap F_2)\subseteq \rho(F_1)\cap \rho(F_2)$, since $\rho$ is isotone. Conversely, $\rho(F_1)\cap \rho(F_2)\subseteq F_1\cap F_2$, and this implies $\rho(F_1)\cap \rho(F_2)\subseteq \rho(F_1\cap F_2)$.
\item [\ref{rfilter4}:] It is a direct result of Proposition \ref{pureinterd} and \ref{rfilter1}.
\item [\ref{rfilter5}:] Let $F$ be a pure filter of $\mathfrak{A}$. Since $\rho$ is isotone, so $F\subseteq \rho(Rad(F))$. Conversely, let $\mathfrak{m}$ be an arbitrary maximal filter of $\mathfrak{A}$ containing $F$. We get that $\rho(Rad(F))\subseteq \rho(\mathfrak{m})$, and this establishes the result due to \ref{rfilter4}.
\item [\ref{rfilter6}:] Let $\mathfrak{p}$ be a prime filter of $\mathfrak{A}$. By Proposition \ref{unitpro}\ref{unitpro4}, \ref{rfilter1} and \ref{rfilter2} follows that $\rho(\mathfrak{p})\subseteq \rho(\sigma(\mathfrak{p}))\subseteq \rho(D(\mathfrak{p}))\subseteq \rho(\mathfrak{p})$.
\end{proof}
\begin{corollary}\label{rhosigmanorg}
  Let $\mathfrak{A}$ be a Gelfand residuated lattice and $F$ a filter of $\mathfrak{A}$. Then $\rho(F)=\sigma(F)$.
\end{corollary}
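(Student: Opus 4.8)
The plan is to prove the two inclusions separately, the nontrivial one reducing to the purity of $\sigma(F)$. The inclusion $\rho(F)\subseteq \sigma(F)$ is free: it is exactly Proposition \ref{rfilter}\ref{rfilter1} and needs no hypothesis on $\mathfrak{A}$. For the reverse inclusion, recall that $\rho(F)=\underline{\bigvee}\{G\in\sigma(\mathfrak{A})\mid G\subseteq F\}$ is, with respect to inclusion, an upper bound of every pure filter contained in $F$. Since $\sigma(F)\subseteq F$ always holds by Proposition \ref{unitpro}\ref{unitpro2}, it therefore suffices to show that in a Gelfand residuated lattice $\sigma(F)$ is itself pure; then $\sigma(F)$ appears among the filters joined on the right, whence $\sigma(F)\subseteq \rho(F)$ and equality follows.

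To establish purity I would verify $\sigma(\sigma(F))=\sigma(F)$. One inclusion, $\sigma(\sigma(F))\subseteq \sigma(F)$, is immediate from Proposition \ref{unitpro}\ref{unitpro2} applied to the filter $\sigma(F)$. For the inclusion $\sigma(F)\subseteq \sigma(\sigma(F))$, take $a\in \sigma(F)$. By the element-wise description in Proposition \ref{unitpro}\ref{unitpro1} this means $F\veebar a^{\perp}=A$, i.e. $F$ and the coannulet $a^{\perp}$ are comaximal. Here is the only place the Gelfand hypothesis enters: by Theorem \ref{equgelchaunit} (clause \ref{equgelchaunit5}) comaximality is inherited by the $\sigma$-images, so $\sigma(F)\veebar \sigma(a^{\perp})=A$. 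Since $\sigma(a^{\perp})\subseteq a^{\perp}$ by Proposition \ref{unitpro}\ref{unitpro2}, we obtain $\sigma(F)\veebar a^{\perp}=A$, and applying Proposition \ref{unitpro}\ref{unitpro1} once more---now to the filter $\sigma(F)$---yields $a\in \sigma(\sigma(F))$. This proves $\sigma(F)\subseteq \sigma(\sigma(F))$ and hence that $\sigma(F)$ is pure.

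The main (and essentially only) obstacle is the purity of $\sigma(F)$, and within that the inclusion $\sigma(F)\subseteq \sigma(\sigma(F))$, since the opposite inclusion and the containment $\rho(F)\subseteq \sigma(F)$ hold in any residuated lattice. The Gelfand assumption is precisely what converts the comaximality of $F$ and $a^{\perp}$ into the comaximality of $\sigma(F)$ and $\sigma(a^{\perp})$; without it one only has $\rho(F)\subseteq \sigma(F)$ in general. Once purity is in hand, the corollary drops out by combining $\sigma(F)\subseteq \rho(F)$ with Proposition \ref{rfilter}\ref{rfilter1}.
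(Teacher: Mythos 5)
Your proof is correct and follows essentially the same route as the paper: both arguments reduce the corollary to showing that $\sigma(F)$ is pure (so that $\sigma(F)\subseteq\rho(F)$, the reverse inclusion being Proposition \ref{rfilter}\ref{rfilter1}), with the Gelfand hypothesis entering through Theorem \ref{equgelchaunit}. The only difference is cosmetic: you invoke clause \ref{equgelchaunit5} (preservation of comaximality under $\sigma$) directly, whereas the paper argues by contradiction, placing $\sigma(F)\veebar a^{\perp}$ inside a maximal filter $\mathfrak{m}$ and applying clause \ref{equgelchaunit2} to get $F\subseteq\mathfrak{m}$, contradicting $F\veebar a^{\perp}=A$.
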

\begin{proof}
  Let $a\in \sigma(F)\setminus \sigma(\sigma(F))$. So $\sigma(F)\veebar a^{\perp}\neq A$. Assume that $\sigma(F)\veebar a^{\perp}\subseteq \mathfrak{m}$, for a maximal filter $\mathfrak{m}$ of $\mathfrak{A}$. By Theorem \ref{equgelchaunit} follows that $F\subseteq \mathfrak{m}$; a contradiction.
\end{proof}

Gelfand rings are characterized in terms of pure ideals by \citet[\S 7, Theorems 31]{borceux1983algebra}. In the following theorem, these results have been improved and generalized to residuated lattices.
\begin{theorem}\label{equgelchapure}
  Let $\mathfrak{A}$ be a residuated lattice. The following assertions are equivalent:
\begin{enumerate}
\item  [(1) \namedlabel{equgelchapure1}{(1)}] $\mathfrak{A}$ is Gelfand;
\item  [(2) \namedlabel{equgelchapure2}{(2)}] $\rho(F)\subseteq \mathfrak{m}$ implies $F\subseteq \mathfrak{m}$, for any $F\in \mathscr{F}(\mathfrak{A})$ and $\mathfrak{m}\in Max(\mathfrak{A})$;
\item  [(3) \namedlabel{equgelchapure3}{(3)}] $Max(\mathfrak{A})\cap h(F)=Max(\mathfrak{A})\cap h(\rho(F))$, for any filter $F$ of $\mathfrak{A}$;
\item  [(4) \namedlabel{equgelchapure4}{(4)}] $Rad(F)=Rad(\rho(F))$, for any filter $F$ of $\mathfrak{A}$;
\item  [(5) \namedlabel{equgelchapure5}{(5)}] if $F$ and $G$ are comaximal filters of $\mathfrak{A}$, then so are $\rho(F)$ and $\rho(G)$;
\item  [(6) \namedlabel{equgelchapure6}{(6)}] if $\mathcal{F}$ is a family of filters of $\mathfrak{A}$, then $\rho(\veebar\mathcal{F})=\veebar_{F\in \mathcal{F}}\rho(F)$;
\item  [(7) \namedlabel{equgelchapure7}{(7)}] if $\mathfrak{m}$ and $\mathfrak{n}$ are maximal filters of $\mathfrak{A}$, then $\rho(\mathfrak{m})$ and $\rho(\mathfrak{n})$ are comaximal.

\end{enumerate}
\end{theorem}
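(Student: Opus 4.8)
The plan is to lean on Corollary \ref{rhosigmanorg}, which says that $\rho=\sigma$ on a Gelfand residuated lattice, so that the forward direction reduces to a transcription of the already-established $\sigma$-version, Theorem \ref{equgelchaunit}. Concretely, assuming \ref{equgelchapure1}, Corollary \ref{rhosigmanorg} gives $\rho(F)=\sigma(F)$ for every filter $F$, whence statements \ref{equgelchapure2}--\ref{equgelchapure6} become word-for-word \ref{equgelchaunit2}--\ref{equgelchaunit6} of Theorem \ref{equgelchaunit} and therefore hold; moreover \ref{equgelchapure7} is the instance of \ref{equgelchapure5} at two distinct maximal filters, which are always comaximal. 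So the real content lies in the converse, and I would organise the write-up as the cycle \ref{equgelchapure1}$\Rightarrow$\ref{equgelchapure2}$\Rightarrow$\ref{equgelchapure3}$\Rightarrow$\ref{equgelchapure4}$\Rightarrow$\ref{equgelchapure5}$\Rightarrow$\ref{equgelchapure7}$\Rightarrow$\ref{equgelchapure1}, folding \ref{equgelchapure6} in at the end.

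The implications among \ref{equgelchapure2}--\ref{equgelchapure5} and \ref{equgelchapure7} are short and parallel the proof of Theorem \ref{equgelchaunit}. For \ref{equgelchapure2}$\Rightarrow$\ref{equgelchapure3}, since $\rho(F)\subseteq F$ one always has $Max(\mathfrak{A})\cap h(F)\subseteq Max(\mathfrak{A})\cap h(\rho(F))$, and the reverse inclusion is exactly hypothesis \ref{equgelchapure2}. Then \ref{equgelchapure3}$\Rightarrow$\ref{equgelchapure4} is immediate from $Rad(F)=k(Max(\mathfrak{A})\cap h(F))$. For \ref{equgelchapure4}$\Rightarrow$\ref{equgelchapure5} I would argue by contradiction: if comaximal $F,G$ had $\rho(F),\rho(G)$ inside a common maximal filter $\mathfrak{m}$, then $Rad(F)=Rad(\rho(F))\subseteq\mathfrak{m}$ and likewise for $G$, forcing $F,G\subseteq\mathfrak{m}$ and contradicting $F\veebar G=A$. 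Specialising \ref{equgelchapure5} to two distinct (hence comaximal) maximal filters gives \ref{equgelchapure7}. Finally \ref{equgelchapure6} attaches painlessly: \ref{equgelchapure1}$\Rightarrow$\ref{equgelchapure6} again by Corollary \ref{rhosigmanorg} together with Theorem \ref{equgelchaunit}\ref{equgelchaunit6}, while \ref{equgelchapure6}$\Rightarrow$\ref{equgelchapure7} follows by taking $\mathcal{F}=\{\mathfrak{m},\mathfrak{n}\}$ with $\mathfrak{m}\veebar\mathfrak{n}=A$ and using $\rho(A)=A$.

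The one genuinely load-bearing step is the converse \ref{equgelchapure7}$\Rightarrow$\ref{equgelchapure1}, and this is where the main subtlety sits: a priori the pure part $\rho$ is strictly smaller than $\sigma$, so a hypothesis phrased with $\rho$ looks weaker than the corresponding $\sigma$-statement. The resolution is the chain of inclusions $\rho(\mathfrak{m})\subseteq\sigma(\mathfrak{m})=D(\mathfrak{m})$, coming from Proposition \ref{rfilter}\ref{rfilter1} and Proposition \ref{unitpro}\ref{unitpro5}. Given distinct maximal filters $\mathfrak{m},\mathfrak{n}$, hypothesis \ref{equgelchapure7} yields $\rho(\mathfrak{m})\veebar\rho(\mathfrak{n})=A$, and these inclusions immediately upgrade it to $D(\mathfrak{m})\veebar D(\mathfrak{n})=A$, which is precisely condition \ref{pmprop3} of Theorem \ref{pmprop}; hence $\mathfrak{A}$ is Gelfand. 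Beyond keeping track that $\rho$ is contractive and idempotent with fixed points the pure filters (Proposition \ref{rfilter}\ref{rfilter2}) and that $\rho(A)=A$, I expect no serious obstacle, since all the hard work has already been absorbed into Theorem \ref{equgelchaunit} and Corollary \ref{rhosigmanorg}.
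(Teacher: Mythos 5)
Your proof is correct, and it runs on the same key ingredients as the paper's: Corollary \ref{rhosigmanorg} together with Theorem \ref{equgelchaunit} for the forward direction, the chain \ref{equgelchapure2}$\Rightarrow$\ref{equgelchapure3}$\Rightarrow$\ref{equgelchapure4}$\Rightarrow$\ref{equgelchapure5} essentially verbatim, and the upgrade $\rho(\mathfrak{m})\subseteq\sigma(\mathfrak{m})=D(\mathfrak{m})$ (Proposition \ref{rfilter}\ref{rfilter1} and Proposition \ref{unitpro}\ref{unitpro5}) feeding into Theorem \ref{pmprop}\ref{pmprop3} for \ref{equgelchapure7}$\Rightarrow$\ref{equgelchapure1} --- the latter being exactly what the paper compresses into ``It follows by Theorem \ref{pmprop}.'' The one genuine difference is how the cycle is closed. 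The paper runs \ref{equgelchapure1}$\Rightarrow$\ref{equgelchapure2}$\Rightarrow$\ref{equgelchapure3}$\Rightarrow$\ref{equgelchapure4}$\Rightarrow$\ref{equgelchapure5}$\Rightarrow$\ref{equgelchapure6}$\Rightarrow$\ref{equgelchapure1}, proving \ref{equgelchapure5}$\Rightarrow$\ref{equgelchapure6} by a direct coannulet argument (from $a\in\rho(\veebar\mathcal{F})$ one gets $(\veebar\mathcal{G})\veebar a^{\perp}=A$ for some finite $\mathcal{G}\subseteq\mathcal{F}$, then applies the comaximality hypothesis), and it attaches \ref{equgelchapure7} only through the implication \ref{equgelchapure7}$\Rightarrow$\ref{equgelchapure1}; as literally written it never derives \ref{equgelchapure7} from the other conditions, leaving the easy step \ref{equgelchapure5}$\Rightarrow$\ref{equgelchapure7} (distinct maximal filters are comaximal) implicit. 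You instead route the cycle through \ref{equgelchapure7}, i.e.\ \ref{equgelchapure5}$\Rightarrow$\ref{equgelchapure7}$\Rightarrow$\ref{equgelchapure1}, and recover \ref{equgelchapure6} for free from \ref{equgelchapure1} via $\rho=\sigma$ and Theorem \ref{equgelchaunit}\ref{equgelchaunit6}, closing it with \ref{equgelchapure6}$\Rightarrow$\ref{equgelchapure7} (take $\mathcal{F}=\{\mathfrak{m},\mathfrak{n}\}$ and use $\rho(A)=A$). This costs nothing, since all the hard analysis is already absorbed into Theorem \ref{equgelchaunit}, it avoids redoing the paper's \ref{equgelchapure5}$\Rightarrow$\ref{equgelchapure6} computation, and it has the small virtue of making the equivalence of \ref{equgelchapure7} with the remaining conditions fully explicit, which the paper's written proof does not.
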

\begin{proof}
\item [\ref{equgelchapure1}$\Rightarrow$\ref{equgelchapure2}:] It follows by Theorem \ref{equgelchaunit} and Proposition \ref{rhosigmanorg}.
\item [\ref{equgelchapure2}$\Rightarrow$\ref{equgelchapure3}:] It is evident.
\item [\ref{equgelchapure3}$\Rightarrow$\ref{equgelchapure4}:] It is evident.
\item [\ref{equgelchapure4}$\Rightarrow$\ref{equgelchapure5}:] Let $F$ and $G$ be two comaximal filter of $\mathfrak{A}$. Assume by absurdum that $\rho(F)$ and $\rho(G)$ are not comaximal. Thus $\rho(F),\rho(G)\subseteq \mathfrak{m}$, for some maximal filter $\mathfrak{m}$ of $\mathfrak{A}$. Thus $Rad(\rho(F)),Rad(\rho(G))\subseteq \mathfrak{m}$, and this yields that $Rad(F),Rad(G)\subseteq \mathfrak{m}$. This establishes that $F,G\subseteq \mathfrak{m}$; a contradiction.
\item [\ref{equgelchapure5}$\Rightarrow$\ref{equgelchapure6}:] Let $\mathcal{F}$ be a family of filters of $\mathfrak{A}$. Consider $a\in \rho(\veebar \mathcal{F})$. So $(\veebar \mathcal{F})\veebar a^{\perp}=A$. Routinely, one can show that $(\veebar \mathcal{G})\veebar a^{\perp}=A$, for a finite subset $\mathcal{G}$ of $\mathcal{F}$. This results that $(\veebar_{G\in \mathcal{G}}\rho(G))\veebar a^{\perp}=A$. Therefore, $a\in \sigma(\veebar_{G\in \mathcal{G}}\rho(G))\subseteq \veebar_{F\in \mathcal{F}}\rho(F)$.
\item [\ref{equgelchapure6}$\Rightarrow$\ref{equgelchapure1}:] There is nothing to prove.
\item [\ref{equgelchapure7}$\Rightarrow$\ref{equgelchapure1}:] It follows by Theorem \ref{pmprop}.
\end{proof}
In the following, the relation between pure filters and radicals in a Gelfand residuated lattice is described. This description is inspired by the one obtained for Gelfand rings \cite[\S 8, Proposition 34]{borceux1983algebra}.

Recall that a pair $(f,g)$ is called \textit{an adjunction (or isotone Galois connection)} between posets $\mathscr{A}=(A;\leq)$ and $\mathscr{B}=(B;\preccurlyeq)$, where $f:A\longrightarrow B$ and $g:B\longrightarrow A$ are two functions such that for all $a\in A$ and $b\in B$, $f(a)\leq b$ if and only if $a\preccurlyeq g(b)$. It is well-known that $(f,g)$ is an adjunction if and only if $gf$ is inflationary, $fg$ is deflationary, and $f,g$ are isotone \citep[Theorem 2]{garcia2013galois}. Also, $\mathscr{C}_{fg}=Im(f)$, in which $\mathscr{C}_{fg}$ is the set of fixed points of the kernel operator $fg$.
\begin{theorem}\label{equgelchapure}\label{rhoradgel}
  Let $\mathfrak{A}$ be a residuated lattice. The following assertions are equivalent:
\begin{enumerate}
\item  [(1) \namedlabel{rhoradgel1}{(1)}] $\mathfrak{A}$ is Gelfand;
\item  [(2) \namedlabel{rhoradgel2}{(2)}] The pair $(\rho,Rad)$ is an adjunction.
\end{enumerate}
\end{theorem}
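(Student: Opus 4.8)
The plan is to use the characterization of adjunctions recalled immediately before the statement: $(\rho, Rad)$ is an adjunction on $(\mathscr{F}(\mathfrak{A});\subseteq)$ if and only if $\rho$ and $Rad$ are isotone, $Rad\circ\rho$ is inflationary, and $\rho\circ Rad$ is deflationary. Isotonicity is free: $\rho$ is a closure operator by Proposition \ref{rfilter}\ref{rfilter2}, while $Rad$ is order-preserving because enlarging a filter only shrinks the set of maximal filters lying above it. So the whole content sits in the two composite conditions, and I read off $Rad\circ\rho$ as the carrier of the equivalence with the Gelfand property.

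Unwinding the definitions, $F\subseteq Rad(\rho(F))$ holds for every $F$ exactly when each maximal filter $\mathfrak{m}$ with $\rho(F)\subseteq\mathfrak{m}$ also satisfies $F\subseteq\mathfrak{m}$, which is precisely assertion \ref{equgelchapure2} of the preceding characterization; by that result it is equivalent to $\mathfrak{A}$ being Gelfand. This disposes of the backward implication at once: if $(\rho, Rad)$ is an adjunction, then substituting $G=\rho(F)$ into $\rho(F)\subseteq G\Leftrightarrow F\subseteq Rad(G)$ gives $F\subseteq Rad(\rho(F))$ for all $F$, so $\mathfrak{A}$ is Gelfand.

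For the forward implication I would assume $\mathfrak{A}$ Gelfand, so that isotonicity and the inflationary condition already hold, and the only thing left is the deflationary inequality $\rho(Rad(G))\subseteq G$ for every filter $G$. The idea is that the Gelfand hypothesis makes the pure part blind to the passage from $G$ to $Rad(G)$. First, $G$ and $Rad(G)$ sit in exactly the same maximal filters, and by assertion \ref{equgelchapure3} taking pure parts does not disturb this set; hence $Max(\mathfrak{A})\cap h(\rho(Rad(G)))=Max(\mathfrak{A})\cap h(G)=Max(\mathfrak{A})\cap h(\rho(G))$. Feeding the pure filters $\rho(Rad(G))$ and $\rho(G)$ into the representation of a pure filter as the intersection of the $D(\mathfrak{m})$ over the maximal filters above it (Proposition \ref{pureinterd}) then forces $\rho(Rad(G))=\rho(G)\subseteq G$, completing the verification.

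I expect the deflationary step to be the only real obstacle. The inflationary condition is a direct transcription of an already-established characterization, whereas $\rho(Rad(G))\subseteq G$ truly uses that $\mathfrak{A}$ is Gelfand, through the insensitivity of the maximal-filter hull to $\rho$ (assertion \ref{equgelchapure3}) together with the intersection formula of Proposition \ref{pureinterd}; equivalently one could invoke $\rho=\sigma$ on a Gelfand lattice (Corollary \ref{rhosigmanorg}) and run the same computation for $\sigma$.
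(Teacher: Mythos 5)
Your proof is correct and is essentially the paper's argument in a different packaging: the paper verifies the defining bi-implication $\rho(F)\subseteq G \Leftrightarrow F\subseteq Rad(G)$ directly, whereas you verify the equivalent isotone--inflationary--deflationary characterization, but both reduce to exactly the same ingredients, namely assertions \ref{equgelchapure2} and \ref{equgelchapure3} of the preceding characterization theorem together with the intersection representation of pure filters (Proposition \ref{pureinterd}, equivalently Proposition \ref{rfilter}\ref{rfilter4}). In particular, your unit condition $F\subseteq Rad(\rho(F))$ is the paper's forward implication specialized to $G=\rho(F)$, and your counit computation $\rho(Rad(G))=\rho(G)\subseteq G$ is the paper's backward implication specialized to $F=Rad(G)$.
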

\begin{proof}
\item [\ref{rhoradgel1}$\Rightarrow$\ref{rhoradgel2}:] Let $F$ and $G$ be two filters of $\mathfrak{A}$. Suppose first $\rho(F)\subseteq G$. Assume that $\mathfrak{m}$ is a maximal filter of $\mathfrak{A}$ containing $G$. By Theorem \ref{equgelchapure} we have $F\subseteq \mathfrak{m}$, and this yields that $F\subseteq Rad(G)$. Conversely, suppose $F\subseteq Rad(G)$. One can see that $\rho(F)\subseteq \rho(\mathfrak{m})$, for any maximal filter $\mathfrak{m}$ of $\mathfrak{A}$ containing $G$. Using Theorem \ref{equgelchapure} and Proposition \ref{rfilter}\ref{rfilter4}, we have the following sequence of formulas:
  \[
  \begin{array}{ll}
    \rho(F) & \subseteq \{\rho(\mathfrak{m})\mid \mathfrak{m}\in Max(\mathfrak{A})\cap h(G)\} \\
     & =\{\rho(\mathfrak{m})\mid \mathfrak{m}\in Max(\mathfrak{A})\cap h(\rho(G))\} \\
     & =\rho(G)\subseteq G.
  \end{array}
  \]
\item [\ref{rhoradgel2}$\Rightarrow$\ref{rhoradgel1}:] It follows by Theorem \ref{equgelchapure}.
\end{proof}
Let $\mathfrak{A}$ be a residuated lattice. A proper pure filter of $\mathfrak{A}$ is called \textit{purely-maximal} provided that it is a maximal element in the set of proper and pure filters of $\mathfrak{A}$. The set of purely-maximal filters of $\mathfrak{A}$ shall be denoted by $Max(\sigma(\mathfrak{A}))$. A proper pure filter $P$ of $\mathfrak{A}$ is called \textit{purely-prime} provided that $F_1\cap F_2\subseteq P$ implies $F_1\subseteq P$ or $F_2\subseteq P$, for any $F_1,F_2\in \sigma(\mathfrak{A})$. The set of all purely-prime filters of $\mathfrak{A}$ shall be denoted by $Spp(\mathfrak{A})$. It is obvious that $Max(\sigma(\mathfrak{A}))\subseteq Spp(\mathfrak{A})$. Zorn's lemma ensures that any proper pure filter is contained in a purely-maximal filter, and so in a purely-prime filter.
\begin{proposition}\label{r1filter}
  Let $\mathfrak{A}$ be a residuated lattice. The following assertions hold:
\begin{enumerate}
\item  [$(1)$ \namedlabel{r1filter1}{$(1)$}] $\rho(Spec(\mathfrak{A}))\in Spp(\mathfrak{A})$;
\item  [$(2)$ \namedlabel{r1filter2}{$(2)$}] $Max(\sigma(\mathfrak{A}))\subseteq \rho(Max(\mathfrak{A}))$;
\item  [$(3)$ \namedlabel{r1filter3}{$(3)$}] $F=\bigcap \{P\mid F\subseteq P\in Spp(\mathfrak{A})\}$, for any pure filter $F$ of $\mathfrak{A}$.
\end{enumerate}
\end{proposition}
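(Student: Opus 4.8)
The plan is to handle the three parts in order, using the frame structure of $\sigma(\mathfrak{A})$ (Proposition~\ref{sigmfiltlatt}) together with the properties of $\sigma,\rho$ from Propositions~\ref{unitpro} and~\ref{rfilter}; I read $(1)$ as the claim $\rho(\mathfrak{p})\in Spp(\mathfrak{A})$ for every $\mathfrak{p}\in Spec(\mathfrak{A})$. Two facts would be isolated first. A prime filter $\mathfrak{p}$ is a prime element of the filter lattice: if $F_1\cap F_2\subseteq\mathfrak{p}$ for filters $F_1,F_2$, then $F_1\subseteq\mathfrak{p}$ or $F_2\subseteq\mathfrak{p}$, since choosing $a\in F_1\setminus\mathfrak{p}$ and $b\in F_2\setminus\mathfrak{p}$ gives $a\vee b\in F_1\cap F_2\subseteq\mathfrak{p}$ (filters absorb joins), contradicting primeness. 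Moreover, by Proposition~\ref{unitpro}\ref{unitpro7} the filter join $\veebar$ of pure filters is again pure, so $\veebar$ is the join and $\cap$ the meet (Proposition~\ref{unitpro}\ref{unitpro6}) of the frame $\sigma(\mathfrak{A})$. Part $(1)$ then follows: $\rho(\mathfrak{p})$ is pure and proper (being contained in $\mathfrak{p}$), and if $F_1\cap F_2\subseteq\rho(\mathfrak{p})\subseteq\mathfrak{p}$ with $F_1,F_2$ pure, primeness gives say $F_1\subseteq\mathfrak{p}$, whence $F_1\subseteq\rho(\mathfrak{p})$ because $\rho(\mathfrak{p})$ is the largest pure filter in $\mathfrak{p}$. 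For $(2)$, a purely-maximal $P$ is a proper filter, so $P\subseteq\mathfrak{m}$ for some $\mathfrak{m}\in Max(\mathfrak{A})$; purity forces $P\subseteq\rho(\mathfrak{m})$, and maximality among proper pure filters yields $P=\rho(\mathfrak{m})$.

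For $(3)$ the inclusion $\subseteq$ is trivial, so the content is separation: for each $a\notin F$ I must produce a purely-prime $P$ with $F\subseteq P$ and $a\notin P$. I would apply Zorn inside $\sigma(\mathfrak{A})$ to $\Sigma=\{G\in\sigma(\mathfrak{A})\mid F\subseteq G,\ a\notin G\}$, which is nonempty as $F\in\Sigma$. The union of a chain in $\Sigma$ is a filter and is pure (a member $c$ of the union lies in some $G$ with $G\veebar c^{\perp}=A$ by Proposition~\ref{unitpro}\ref{unitpro1}, so the union joins $c^{\perp}$ to $A$ as well), and it contains $F$ and omits $a$; hence Zorn yields a maximal $P\in\Sigma$, which is proper and pure. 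If $P$ were not purely-prime, there would be pure $F_1,F_2$ with $F_1\cap F_2\subseteq P$ but $F_1\not\subseteq P$ and $F_2\not\subseteq P$; then $P\subsetneq P\veebar F_i$, so maximality forces $a\in P\veebar F_i$ for $i=1,2$, while frame distributivity gives $(P\veebar F_1)\cap(P\veebar F_2)=P\veebar(F_1\cap F_2)=P$, yielding the contradiction $a\in P$. Thus $P$ is purely-prime and separates $a$ from $F$, completing the reverse inclusion.

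The main obstacle is exactly the purely-prime verification in $(3)$, which must be carried out entirely within $\sigma(\mathfrak{A})$ and therefore rests on two structural facts being secured beforehand: that chains---indeed arbitrary joins---of pure filters remain pure, so that $\Sigma$ is chain-complete and each $P\veebar F_i$ lies in $\sigma(\mathfrak{A})$, and that the frame identity $(P\veebar F_1)\cap(P\veebar F_2)=P\veebar(F_1\cap F_2)$ holds for pure filters. Both follow from Propositions~\ref{unitpro} and~\ref{sigmfiltlatt}, so once the frame operations on $\sigma(\mathfrak{A})$ are pinned down the three parts fall out as above.
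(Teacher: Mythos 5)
Your proof is correct, and for parts \ref{r1filter1} and \ref{r1filter2} it coincides with the paper's (very terse) argument: $\rho(\mathfrak{p})$ is purely-prime because a prime filter is a prime element of $\mathscr{F}(\mathfrak{A})$ and $\rho(\mathfrak{p})$ is the largest pure filter inside $\mathfrak{p}$; a purely-maximal $P$ sits inside some $\mathfrak{m}\in Max(\mathfrak{A})$, hence inside the proper pure filter $\rho(\mathfrak{m})$, forcing equality. Part \ref{r1filter3} is where you genuinely diverge. The paper does not separate points at all: it sets $\Phi=\{\mathfrak{p}\mid F\subseteq\mathfrak{p}\in Spp(\mathfrak{A})\}$ and $\Psi=\{\rho(\mathfrak{m})\mid\mathfrak{m}\in Max(\mathfrak{A})\cap h(F)\}$, observes via \ref{r1filter1} and isotonicity of $\rho$ that $\Psi\subseteq\Phi$, and then squeezes $F\subseteq\bigcap\Phi\subseteq\bigcap\Psi=F$, the last equality being exactly Proposition \ref{rfilter}\ref{rfilter4} (which descends from Proposition \ref{pureinterd}, i.e., from the representation of a pure filter as an intersection of $D(\mathfrak{m})$'s over maximal filters). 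You instead run a self-contained Zorn/prime-separation argument inside $\sigma(\mathfrak{A})$: purity is preserved under unions of chains because membership in $\sigma(U)$ is witnessed by the monotone condition $U\veebar c^{\perp}=A$ of Proposition \ref{unitpro}\ref{unitpro1}, so a filter maximal among pure filters containing $F$ and omitting $a$ exists, and it is purely-prime by the distributivity $(P\veebar F_1)\cap(P\veebar F_2)=P\veebar(F_1\cap F_2)$, which is legitimate since $\mathscr{F}(\mathfrak{A})$ (and by Propositions \ref{unitpro}\ref{unitpro6}, \ref{unitpro}\ref{unitpro7} and \ref{sigmfiltlatt}, also $\sigma(\mathfrak{A})$) is a frame. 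The trade-off: the paper's route is shorter given the machinery already proved and yields the stronger information that the separating purely-prime filters may be taken of the special form $\rho(\mathfrak{m})$ with $\mathfrak{m}$ maximal; your route avoids any appeal to the maximal-filter representation of pure filters (Propositions \ref{pureinterd} and \ref{rfilter}\ref{rfilter4}) and is essentially a general lattice-theoretic prime separation theorem carried out in the frame of pure filters, so it would survive in settings where that representation is unavailable.
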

\begin{proof}
\item [\ref{r1filter1}:] It is straightforward by Proposition \ref{rfilter}\ref{rfilter2}.
\item [\ref{r1filter2}:] It follows by \ref{r1filter1}.
\item [\ref{r1filter3}:] Let $\Phi=\{\mathfrak{p}\mid F\subseteq \mathfrak{p}\in Spp(\mathfrak{A})\}$ and $\Psi=\{\rho(\mathfrak{m})\mid \mathfrak{m}\in Max(\mathfrak{A})\cap h(F)\}$. Using Proposition \ref{rfilter}\ref{rfilter4} and \ref{r1filter1}, it follows that $F\subseteq \bigcap\Phi\subseteq \bigcap\Psi=F$.
\end{proof}

The following proposition characterizes the purely-maximal filters of a Gelfand residuated lattice.
\begin{proposition}\label{gelfmaxpure}
  Let $\mathfrak{A}$ be a Gelfand residuated lattice. The following assertions hold:
\begin{enumerate}
\item  [$(1)$ \namedlabel{gelfmaxpure1}{$(1)$}] $Max(\sigma(\mathfrak{A}))=\rho(Max(\mathfrak{A}))$;
\item  [$(2)$ \namedlabel{gelfmaxpure2}{$(2)$}] $Spp(\mathfrak{A})=Max(\sigma(\mathfrak{A}))$;
\item  [$(3)$ \namedlabel{gelfmaxpure3}{$(3)$}] $Spp(\mathfrak{A})=\rho(Max(\mathfrak{A}))$.
\end{enumerate}
\end{proposition}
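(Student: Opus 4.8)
The plan is to prove the three identities simultaneously, exploiting that they overlap: since (1) asserts $Max(\sigma(\mathfrak{A}))=\rho(Max(\mathfrak{A}))$ and (3) asserts $Spp(\mathfrak{A})=\rho(Max(\mathfrak{A}))$, statement (2) will follow at once from (1) and (3). Thus I only need to supply the two inclusions not already on record. Recall that $Max(\sigma(\mathfrak{A}))\subseteq \rho(Max(\mathfrak{A}))$ is Proposition~\ref{r1filter}\ref{r1filter2}, that $Max(\sigma(\mathfrak{A}))\subseteq Spp(\mathfrak{A})$ is observed in the text, and that $\rho(Max(\mathfrak{A}))\subseteq Spp(\mathfrak{A})$ follows from Proposition~\ref{r1filter}\ref{r1filter1} because every maximal filter is prime. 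Throughout I would use freely that in a Gelfand residuated lattice $\rho(F)=\sigma(F)$ (Corollary~\ref{rhosigmanorg}) and $\rho(\mathfrak{m})=\sigma(\mathfrak{m})=D(\mathfrak{m})$ for maximal $\mathfrak{m}$ (Proposition~\ref{unitpro}\ref{unitpro5}). So the two remaining tasks are $\rho(Max(\mathfrak{A}))\subseteq Max(\sigma(\mathfrak{A}))$ and $Spp(\mathfrak{A})\subseteq \rho(Max(\mathfrak{A}))$.

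For the first task (which completes (1)), I would fix a maximal filter $\mathfrak{m}$ and check that $\rho(\mathfrak{m})=D(\mathfrak{m})$ is purely-maximal. It is proper, since $D(\mathfrak{m})\subseteq \mathfrak{m}\neq A$, and pure by construction. If $G$ is a proper pure filter with $\rho(\mathfrak{m})\subseteq G$, then $G\subseteq \mathfrak{n}$ for some maximal $\mathfrak{n}$, whence $D(\mathfrak{m})\subseteq \mathfrak{n}$; by the uniqueness of the maximal filter over $D(\mathfrak{m})$ (Theorem~\ref{pmprop}\ref{pmprop8}) this forces $\mathfrak{n}=\mathfrak{m}$, so $G\subseteq \mathfrak{m}$. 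Purity of $G$ together with the isotonicity of $\rho$ then gives $G=\rho(G)\subseteq \rho(\mathfrak{m})$, hence $G=\rho(\mathfrak{m})$. This establishes (1).

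The delicate task is $Spp(\mathfrak{A})\subseteq \rho(Max(\mathfrak{A}))$, which yields (3) and then (2). Let $P$ be purely-prime. Being proper and pure, Proposition~\ref{pureinterd} represents it as $P=\bigcap\{D(\mathfrak{m})\mid \mathfrak{m}\in M\}$ with $M=Max(\mathfrak{A})\cap h(P)$ nonempty; if $M=\{\mathfrak{m}\}$ is a singleton then $P=D(\mathfrak{m})=\rho(\mathfrak{m})$ and we are done, so I would derive a contradiction from two distinct $\mathfrak{m}_1,\mathfrak{m}_2\in M$. Here lies the main obstacle: one cannot feed the purely-prime condition with $D(\mathfrak{m}_1)$ and $D(\mathfrak{m}_2)$, since $P$ sits \emph{below} both rather than above their meet. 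The resolution is to separate $\mathfrak{m}_1,\mathfrak{m}_2$ by Theorem~\ref{pmprop}\ref{pmprop2}, choosing $a\notin \mathfrak{m}_1$ and $b\notin \mathfrak{m}_2$ with $a\vee b=1$. Then $\mathscr{F}(a)\cap \mathscr{F}(b)=\mathscr{F}(a\vee b)=\{1\}$ by Proposition~\ref{genfilprop}\ref{genfilprop3}, so by Proposition~\ref{unitpro}\ref{unitpro6} the two pure filters $\sigma(\mathscr{F}(a))$ and $\sigma(\mathscr{F}(b))$ (pure because $\sigma=\rho$ lands in $\sigma(\mathfrak{A})$) satisfy $\sigma(\mathscr{F}(a))\cap \sigma(\mathscr{F}(b))=\{1\}\subseteq P$. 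The purely-prime hypothesis then forces one of them, say $\sigma(\mathscr{F}(a))$, into $P$. Since $\mathfrak{m}_1\in M$ gives $P\subseteq \mathfrak{m}_1$, this means $\sigma(\mathscr{F}(a))\subseteq \mathfrak{m}_1$, and Theorem~\ref{equgelchaunit}\ref{equgelchaunit2} upgrades this to $\mathscr{F}(a)\subseteq \mathfrak{m}_1$, i.e. $a\in \mathfrak{m}_1$, contradicting the choice of $a$; the symmetric alternative contradicts $b\notin \mathfrak{m}_2$ via $P\subseteq \mathfrak{m}_2$. Hence $M$ is a singleton and $P\in \rho(Max(\mathfrak{A}))$, giving (3); combining (1) and (3) yields $Spp(\mathfrak{A})=\rho(Max(\mathfrak{A}))=Max(\sigma(\mathfrak{A}))$, which is (2).
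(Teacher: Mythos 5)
Your proof is correct, and every citation checks out: Proposition \ref{r1filter}\ref{r1filter2} plus the purely-maximality argument via Theorem \ref{pmprop}\ref{pmprop8} give (1); Proposition \ref{pureinterd}, the separation property of Theorem \ref{pmprop}\ref{pmprop2}, Proposition \ref{unitpro}\ref{unitpro6}, Corollary \ref{rhosigmanorg} and Theorem \ref{equgelchaunit}\ref{equgelchaunit2} give (3); and (2) then follows formally. However, your route through the hard part is genuinely different from the paper's. The paper proves (1) exactly as you do, but then proves (2) directly: it embeds a purely-prime $\mathfrak{p}$ into a purely-maximal $\mathfrak{m}$ by Zorn's lemma applied to pure filters, writes $\mathfrak{m}=\veebar_{a\in \mathfrak{m}}\rho(\mathscr{F}(a))$ using the Gelfand join-preservation property $\rho(\veebar\mathcal{F})=\veebar_{F\in\mathcal{F}}\rho(F)$, picks $a\in\mathfrak{m}$ with $\rho(\mathscr{F}(a))\nsubseteq\mathfrak{p}$, and feeds purely-primeness with the pair $\rho(\mathscr{F}(a))$, $\rho(a^{\perp})$, whose intersection is $\{1\}$ because $\mathscr{F}(a)\cap a^{\perp}=\{1\}$; the conclusion $\rho(a^{\perp})\subseteq\mathfrak{p}$ is then contradictory, though the paper leaves the final clash implicit (purity of $\mathfrak{m}$ gives $\mathfrak{m}\veebar a^{\perp}=A$, comaximality transfers to the pure parts in a Gelfand lattice, and this contradicts $\rho(a^{\perp})\subseteq\mathfrak{m}$ with $\mathfrak{m}$ proper); finally (3) is deduced from (1) and (2). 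You instead prove (3) directly: representing a purely-prime $P$ as $\bigcap\{D(\mathfrak{m})\mid \mathfrak{m}\in Max(\mathfrak{A})\cap h(P)\}$ and separating two hypothetical maximal filters over $P$ by $a\vee b=1$, you manufacture the trivially-intersecting pure pair $\sigma(\mathscr{F}(a))$, $\sigma(\mathscr{F}(b))$, so your contradiction ($a\in\mathfrak{m}_1$ against $a\notin\mathfrak{m}_1$) is immediate. What your approach buys: no Zorn's lemma on the frame of pure filters, no coannulet identity, no join-preservation of $\rho$, and a sharper intermediate statement (every purely-prime filter of a Gelfand residuated lattice literally equals $D(\mathfrak{m})$ for a unique maximal filter $\mathfrak{m}$). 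What the paper's approach buys: it isolates the coincidence of purely-prime and purely-maximal filters as a fact about the frame $\sigma(\mathfrak{A})$ in its own right, before any identification with $\rho(Max(\mathfrak{A}))$ is made.
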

\begin{proof}
\item [\ref{gelfmaxpure1}:] It is a direct result of Theorem \ref{equgelchapure} and Proposition \ref{r1filter}\ref{r1filter2}.
\item [\ref{gelfmaxpure2}:] Let $\mathfrak{p}$ be a purely-prime filter of $\mathfrak{A}$. Assume that $\mathfrak{m}$ is a purely-maximal filter of $\mathfrak{A}$ containing $\mathfrak{p}$. Suppose that $\mathfrak{p}\neq \mathfrak{m}$. By Theorem \ref{equgelchapure}, we have $\mathfrak{m}=\veebar_{a\in \mathfrak{m}}\rho(\mathscr{F}(a))$. Choose $a\in \mathfrak{m}$ such that $\mathscr{F}(a)\nsubseteq \mathfrak{p}$. On the other hand, $\rho(\mathscr{F}(a)\cap \rho(a^{\perp})=\{1\}\subseteq \mathfrak{p}$ which implies that $\rho(a^{\perp})\subseteq \mathfrak{p}$; a contradiction.
\item [\ref{gelfmaxpure3}:] It follows by \ref{gelfmaxpure1} and \ref{gelfmaxpure2}.
\end{proof}

For each pure filter $F$ of $\mathfrak{A}$ we set $d_{p}(F)=\{P\in Spp(\mathfrak{A})\mid F\nsubseteq P\}$. $Spp(\mathfrak{A})$ can be topologized by taking the set $\{d_{p}(F)\mid F\in \sigma(\mathfrak{A})\}$ as the open sets. The set $Spp(\mathfrak{A})$ endowed with this topology is called the \textit{pure spectrum} of $\mathfrak{A}$. It is obvious that the closed subsets of the pure spectrum are precisely of the form $h_{p}(F) =\{P\in Spp(\mathfrak{A})\mid F\subseteq P\}$, in which $F$ runs over the pure filters of $\mathfrak{A}$.

\citet[\S 8, Theorems 39]{borceux1983algebra} has shown that the pure spectrum of a ring is Hausdorff. In the following theorem, this results have been generalized to residuated lattices.
\begin{proposition}\label{gelspphau}
  Let $\mathfrak{A}$ be a Gelfand residuated lattice. $Spp(\mathfrak{A})$ is a Hausdorff space.
\end{proposition}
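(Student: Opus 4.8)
The plan is to describe the points of $Spp(\mathfrak{A})$ explicitly and then separate any two of them by a complementary pair of coannihilators. By Proposition \ref{gelfmaxpure}\ref{gelfmaxpure3}, together with Corollary \ref{rhosigmanorg} and Proposition \ref{unitpro}\ref{unitpro5}, the points of the pure spectrum of a Gelfand residuated lattice are exactly the filters $\rho(\mathfrak{m})=\sigma(\mathfrak{m})=D(\mathfrak{m})$ with $\mathfrak{m}\in Max(\mathfrak{A})$. Hence two distinct points can be written as $P_{1}=D(\mathfrak{m})$ and $P_{2}=D(\mathfrak{n})$ with $\mathfrak{m}\neq\mathfrak{n}$. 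Applying Theorem \ref{pmprop}\ref{pmprop4}, I would fix an element $a\in A$ with $a\in D(\mathfrak{m})$ and $\neg a\in D(\mathfrak{n})$, and propose the open sets $d_{p}(\rho(a^{\perp}))$ and $d_{p}(\rho(a^{\perp\perp}))$ as disjoint neighbourhoods of $P_{1}$ and $P_{2}$, respectively (note that $\rho(a^{\perp})$ and $\rho(a^{\perp\perp})$ are pure, so these are legitimate basic open sets).

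The second step records the elementary escape facts. Since $a\in D(\mathfrak{m})=\omega(\dot{\mathfrak{m}})$, there is some $s\notin\mathfrak{m}$ with $a\vee s=1$; as $a\vee s=1$ forces $s\in a^{\perp}$, this yields $a^{\perp}\nsubseteq\mathfrak{m}$. Dually, $a\in a^{\perp\perp}$ always holds, while $a\notin\mathfrak{n}$ (otherwise $0=a\odot\neg a\in\mathfrak{n}$, since $\neg a\in D(\mathfrak{n})\subseteq\mathfrak{n}$), so $a^{\perp\perp}\nsubseteq\mathfrak{n}$. The passage from these coannihilators to their pure parts is where the Gelfand hypothesis enters, via Theorem \ref{equgelchapure}\ref{equgelchapure2}: if $\rho(a^{\perp})\subseteq D(\mathfrak{m})$, then $\rho(a^{\perp})\subseteq\mathfrak{m}$ (as $D(\mathfrak{m})\subseteq\mathfrak{m}$) and hence $a^{\perp}\subseteq\mathfrak{m}$, contradicting the above; therefore $\rho(a^{\perp})\nsubseteq D(\mathfrak{m})$, i.e.\ $P_{1}\in d_{p}(\rho(a^{\perp}))$, and by the same argument $P_{2}\in d_{p}(\rho(a^{\perp\perp}))$.

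For disjointness I would use that $a^{\perp}$ and $a^{\perp\perp}$ are complementary in the Boolean lattice $\Gamma(\mathfrak{A})$, so $a^{\perp}\cap a^{\perp\perp}=\{1\}$; since $\rho$ is a meet-semilattice homomorphism (Proposition \ref{rfilter}\ref{rfilter3}) with $\rho(\{1\})=\{1\}$, we obtain $\rho(a^{\perp})\cap\rho(a^{\perp\perp})=\{1\}$. Finally, for pure filters $F,G$ and a purely-prime $P$ one has $F\cap G\subseteq P$ if and only if $F\subseteq P$ or $G\subseteq P$, so $d_{p}(F)\cap d_{p}(G)=d_{p}(F\cap G)$; taking $F=\rho(a^{\perp})$ and $G=\rho(a^{\perp\perp})$ gives $d_{p}(\rho(a^{\perp}))\cap d_{p}(\rho(a^{\perp\perp}))=d_{p}(\{1\})=\emptyset$, because $\{1\}$ is contained in every purely-prime filter. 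This exhibits the required disjoint open neighbourhoods and establishes that $Spp(\mathfrak{A})$ is Hausdorff.

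The main obstacle is not the topology but the bookkeeping in moving between a filter and its pure part. It is essential that one separates using $a^{\perp}$ rather than $a$ itself (which lies in $D(\mathfrak{m})$ and so yields no neighbourhood of $P_{1}$), and $a^{\perp\perp}$ rather than $\neg a$; the choice of the \emph{complementary} pair $a^{\perp},a^{\perp\perp}$ is precisely what simultaneously delivers the two escape conditions and the disjointness $\rho(a^{\perp})\cap\rho(a^{\perp\perp})=\{1\}$, and the Gelfand property is used exactly to transfer the non-containments $a^{\perp}\nsubseteq\mathfrak{m}$ and $a^{\perp\perp}\nsubseteq\mathfrak{n}$ down to their pure parts.
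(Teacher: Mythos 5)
Your proof is correct, and it shares the paper's skeleton: both identify the points of $Spp(\mathfrak{A})$ as $\rho(\mathfrak{m})$ for $\mathfrak{m}\in Max(\mathfrak{A})$ via Proposition \ref{gelfmaxpure}\ref{gelfmaxpure3}, both separate two such points by basic opens of the form $d_{p}(\rho(F))$ and $d_{p}(\rho(G))$ with $F\cap G=\{1\}$, and both obtain disjointness from the meet-semilattice property of $\rho$ (Proposition \ref{rfilter}\ref{rfilter3}) together with $d_{p}(\{1\})=\emptyset$. The genuine difference is in how $F$ and $G$ are manufactured. The paper uses Theorem \ref{pmprop}\ref{pmprop2} to pick $a\notin\mathfrak{m}$ and $b\notin\mathfrak{n}$ with $a\vee b=1$, takes the principal filters $F=\mathscr{F}(a)$, $G=\mathscr{F}(b)$, verifies membership via the comaximality-preservation item \ref{equgelchapure5} (from $\mathscr{F}(a)\veebar\mathfrak{m}=A$), and gets $F\cap G=\mathscr{F}(a\vee b)=\{1\}$ from Proposition \ref{genfilprop}\ref{genfilprop3}. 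You instead use Theorem \ref{pmprop}\ref{pmprop4} to pick a single $a$ with $a\in D(\mathfrak{m})$ and $\neg a\in D(\mathfrak{n})$, take the coannihilator pair $F=a^{\perp}$, $G=a^{\perp\perp}$, verify membership through item \ref{equgelchapure2}, and get $F\cap G=\{1\}$ from complementation in $\Gamma(\mathfrak{A})$. Both routes are sound; the paper's is more economical in that it stays within facts restated in this paper, whereas yours imports standard but unrestated coannihilator facts from \cite{rasouli2018generalized} (the elementwise description $a^{\perp}=\{x\in A\mid a\vee x=1\}$, the containment $a\in a^{\perp\perp}$, and $a^{\perp}\cap a^{\perp\perp}=\{1\}$). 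In fact your neighbourhoods contain the paper's: writing $a\vee s=1$ with $s\notin\mathfrak{m}$, one has $\mathscr{F}(s)\subseteq a^{\perp}$ and $\mathscr{F}(a)\subseteq a^{\perp\perp}$, so your argument is essentially the paper's with the separating filters enlarged from principal filters to coannihilators.
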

\begin{proof}
Let $\mathfrak{p}$ and $\mathfrak{q}$ be two distinct purely-prime filters of $\mathfrak{A}$. Applying Proposition \ref{gelfmaxpure}\ref{gelfmaxpure3}, there exists two distinct maximal filters $\mathfrak{m}$ and $\mathfrak{n}$ such that $\mathfrak{p}=\rho(\mathfrak{m})$ and $\mathfrak{q}=\rho(\mathfrak{n})$. Using Theorem \ref{pmprop}, there exist $a\notin \mathfrak{m}$ and $b\notin \mathfrak{n}$ such that $a\vee b=1$. By Theorem \ref{equgelchapure} follows that $\rho(\mathscr{F}(a))\veebar \mathfrak{m}=A$ and $\rho(\mathscr{F}(b))\veebar \mathfrak{n}=A$. Thus $\mathfrak{m}\in d_{p}(\rho(\mathscr{F}(a)))$ and $\mathfrak{n}\in d_{p}(\rho(\mathscr{F}(b)))$. By Proposition \ref{rfilter}\ref{rfilter3}, we have the following sequence of formulas:
\[
\begin{array}{ll}
  d_{p}(\rho(\mathscr{F}(a)))\cap d_{p}(\rho(\mathscr{F}(b))) & =d_{p}(\rho(\mathscr{F}(a))\cap \rho(\mathscr{F}(b))) \\
   & =d_{p}(\rho(\mathscr{F}(a)\cap \mathscr{F}(b))) \\
   & =d_{p}(\rho(\mathscr{F}(a\vee b)))=d_{p}(\{1\})=\emptyset.
\end{array}
\]
This shows that the pure spectrum of $\mathfrak{A}$ is Hausdorff.
\end{proof}

Proposition \ref{r1filter}\ref{r1filter1} leads us to a map $\rho:Spec(\mathfrak{A})\longrightarrow Spp(\mathfrak{A})$, defined by $\mathfrak{p}\rightsquigarrow \rho(\mathfrak{p})$, which is called \textit{the pure part map of $\mathfrak{A}$}. In general, this map is not injective or even surjective. However, by Proposition \ref{r1filter}\ref{r1filter2} follows that if $m$ is a purely-maximal filter of $\mathfrak{A}$, then $\rho(M)=m$, for some $M\in Max(\mathfrak{A})$.
\begin{theorem}\label{spsppconti}
   Let $\mathfrak{A}$ be a residuated lattice. The pure part map of $\mathfrak{A}$ is continuous.
\end{theorem}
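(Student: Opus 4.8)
The plan is to verify continuity by checking that the preimage under $\rho$ of every basic closed set of $Spp(\mathfrak{A})$ is closed in $Spec_{h}(\mathfrak{A})$. Recall that the closed subsets of the pure spectrum are exactly the sets $h_{p}(F)=\{P\in Spp(\mathfrak{A})\mid F\subseteq P\}$, where $F$ ranges over the pure filters of $\mathfrak{A}$; so it suffices to show that $\rho^{\leftarrow}(h_{p}(F))$ is closed in the hull-kernel topology for each such $F$.

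First I would compute this preimage directly. By the definition of the pure part map,
\[
\rho^{\leftarrow}(h_{p}(F))=\{\mathfrak{p}\in Spec(\mathfrak{A})\mid F\subseteq \rho(\mathfrak{p})\}.
\]
The crux is the elementary observation that, for a pure filter $F$ and a prime filter $\mathfrak{p}$, one has $F\subseteq \rho(\mathfrak{p})$ if and only if $F\subseteq \mathfrak{p}$. Indeed, $\rho(\mathfrak{p})$ is by construction the largest pure filter contained in $\mathfrak{p}$, so the containment $\rho(\mathfrak{p})\subseteq \mathfrak{p}$ yields one implication at once; conversely, if the pure filter $F$ is contained in $\mathfrak{p}$, then $F$ is a pure filter below $\mathfrak{p}$ and is therefore absorbed into the largest such, namely $\rho(\mathfrak{p})$.

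With this equivalence in hand the preimage simplifies to $\rho^{\leftarrow}(h_{p}(F))=\{\mathfrak{p}\in Spec(\mathfrak{A})\mid F\subseteq \mathfrak{p}\}=h(F)$. Since $h(F)=\bigcap_{x\in F}h(x)$ is an intersection of basic closed sets of $Spec_{h}(\mathfrak{A})$, it is closed, whence $\rho$ is continuous. I do not expect a genuine obstacle here: once the equivalence $F\subseteq\rho(\mathfrak{p})\Leftrightarrow F\subseteq\mathfrak{p}$ is isolated, the argument is purely formal and uses only that the pure part is the largest pure filter below a given filter together with the description of the closed bases of the two topologies. The one point to flag is that $F$ is assumed pure, a condition built into the closed sets $h_{p}(F)$, without which the forward implication would already fail.
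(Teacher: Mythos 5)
Your proof is correct and takes essentially the same approach as the paper: the paper verifies $\rho^{\leftarrow}(d_{p}(F))=d(F)$ for each pure filter $F$ (left as a routine computation), which is precisely the complement of your identity $\rho^{\leftarrow}(h_{p}(F))=h(F)$. You have merely made the routine step explicit by isolating the equivalence $F\subseteq\rho(\mathfrak{p})\Leftrightarrow F\subseteq\mathfrak{p}$ for $F$ pure, which follows from $\rho(\mathfrak{p})$ being the largest pure filter contained in $\mathfrak{p}$.
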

\begin{proof}
Let $F$ be pure filter of $\mathfrak{A}$. Routinely, one can show that $\rho^{\leftarrow}(d_{p}(F))=d(F)$.
\end{proof}

\citet[\S 8, Proposition 40]{borceux1983algebra} has shown that the pure spectrum of a Gelfand ring is homeomorphic to its usual maximal spectrum. In the following, this result is generalized and improved to Gelfand residuated lattices.
\begin{theorem}\label{sppgelfch}
  Let $\mathfrak{A}$ be a residuated lattice. The following assertions are equivalent:
\begin{enumerate}
\item  [(1) \namedlabel{sppgelfch1}{(1)}] $\mathfrak{A}$ is Gelfand;
\item  [(2) \namedlabel{sppgelfch2}{(2)}] the map $\rho_{m}:Max_{h}(\mathfrak{A})\longrightarrow Spp(\mathfrak{A})$, given by $\mathfrak{m}\rightsquigarrow \rho(\mathfrak{m})$, is a homeomorphism.
\end{enumerate}
\end{theorem}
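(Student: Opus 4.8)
The plan is to prove both implications from the tools already assembled, with \ref{sppgelfch1}$\Rightarrow$\ref{sppgelfch2} reducing to a standard compact-to-Hausdorff argument and \ref{sppgelfch2}$\Rightarrow$\ref{sppgelfch1} reducing to the retract characterization of Theorem \ref{gelnor}.

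For \ref{sppgelfch1}$\Rightarrow$\ref{sppgelfch2} I would verify, in turn, that $\rho_m$ is a well-defined continuous bijection from a compact space onto a Hausdorff space. It is well defined with values in $Spp(\mathfrak{A})$ by Proposition \ref{r1filter}\ref{r1filter1}. Surjectivity is exactly Proposition \ref{gelfmaxpure}\ref{gelfmaxpure3}, which in the Gelfand setting gives $Spp(\mathfrak{A})=\rho(Max(\mathfrak{A}))$. For injectivity, take distinct maximal filters $\mathfrak{m}\neq\mathfrak{n}$; by Theorem \ref{equgelchapure}\ref{equgelchapure7} the pure parts $\rho(\mathfrak{m})$ and $\rho(\mathfrak{n})$ are comaximal, and two proper filters that are comaximal cannot be equal, so $\rho_m(\mathfrak{m})\neq\rho_m(\mathfrak{n})$. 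Continuity of $\rho_m$ is inherited by restricting the pure part map of Theorem \ref{spsppconti} to the subspace $Max_h(\mathfrak{A})$. Finally, $Max_h(\mathfrak{A})$ is compact by Proposition \ref{hulkerinstr}\ref{hulkerinstr2} and $Spp(\mathfrak{A})$ is Hausdorff by Proposition \ref{gelspphau}; since a continuous map from a compact space to a Hausdorff space is closed, the continuous bijection $\rho_m$ is a closed bijection, hence a homeomorphism.

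For \ref{sppgelfch2}$\Rightarrow$\ref{sppgelfch1} the key observation is that the homeomorphism $\rho_m$ supplies a retraction of $Spec_h(\mathfrak{A})$ onto $Max_h(\mathfrak{A})$. Indeed, $\rho_m^{-1}\colon Spp(\mathfrak{A})\to Max_h(\mathfrak{A})$ is continuous, and composing it with the always-continuous pure part map $\rho\colon Spec_h(\mathfrak{A})\to Spp(\mathfrak{A})$ of Theorem \ref{spsppconti} produces a continuous map $r=\rho_m^{-1}\circ\rho\colon Spec_h(\mathfrak{A})\to Max_h(\mathfrak{A})$. For each maximal filter $\mathfrak{m}$ one has $r(\mathfrak{m})=\rho_m^{-1}(\rho(\mathfrak{m}))=\rho_m^{-1}(\rho_m(\mathfrak{m}))=\mathfrak{m}$, so $r$ fixes every point of $Max(\mathfrak{A})$ and is therefore a retraction. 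Theorem \ref{gelnor} then yields that $\mathfrak{A}$ is Gelfand.

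I expect the substance of the proof to be conceptual rather than computational. The forward direction is mainly a matter of invoking the right earlier results in the correct order; its one genuine ingredient is the compact-to-Hausdorff rigidity that upgrades a continuous bijection to a homeomorphism. The harder and more delicate direction is the converse: the crux is recognizing that $\rho_m^{-1}\circ\rho$ is precisely a retraction onto $Max_h(\mathfrak{A})$, which works only because $\rho_m$ is literally the restriction of $\rho$ (so that $r$ fixes maximal filters) and because $\rho$ genuinely lands in $Spp(\mathfrak{A})$ by Proposition \ref{r1filter}\ref{r1filter1}. Once this retraction is identified, the equivalence closes via Theorem \ref{gelnor}.
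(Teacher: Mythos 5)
Your proof is correct and follows essentially the same route as the paper: the forward direction assembles well-definedness and continuity from Theorem \ref{spsppconti}, injectivity and surjectivity from Theorems \ref{equgelchapure} and Proposition \ref{gelfmaxpure}, and then applies the compact-to-Hausdorff rigidity (via Propositions \ref{hulkerinstr} and \ref{gelspphau}), while the converse identifies $\rho_m^{-1}\circ\rho$ as a retraction and invokes Theorem \ref{gelnor}. The only cosmetic difference is that you derive injectivity from comaximality of pure parts (item \ref{equgelchapure7}) rather than, say, item \ref{equgelchapure2}, which is an equally valid instantiation of the same cited theorem.
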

\begin{proof}
\item [\ref{sppgelfch1}$\Rightarrow$\ref{sppgelfch2}:] Applying Theorem \ref{spsppconti}, $\rho_{m}$ is a well-defined continuous map. Injectivity of $\rho_{m}$ follows by Theorem \ref{equgelchapure}, and surjectivity of $\rho_{m}$ follows by Proposition \ref{gelfmaxpure}. Using Propositions \ref{hulkerinstr}\ref{hulkerinstr2} and \ref{gelspphau}, it follows that $Max(\mathfrak{A})$ is compact, and $Spp(\mathfrak{A})$ is Hausdorff, respectively. This holds the result due to \citet[Theorem 3.1.13]{engelking1989general}.
\item [\ref{sppgelfch2}$\Rightarrow$\ref{sppgelfch1}:] One can see that $\rho_{m}^{-1}\circ \rho$ is a retraction from $Spec_{h}(\mathfrak{A})$ into $Max_{h}(\mathfrak{A})$. So $\mathfrak{A}$ is Gelfand due to Theorem \ref{gelnor}.
\end{proof}

The pure ideals of a commutative reduced Gelfand ring with unity are characterized in \citet[Theorems 1.8 \& 1.9]{al1989pure}. These results have been improved and generalized to residuated lattices in Theorem \ref{mppurefcl}.
\begin{theorem}\label{mppurefcl}
Let $\mathfrak{A}$ be a Gelfand residuated lattice. The pure filters of $\mathfrak{A}$ are precisely of the form $\bigcap\{D(\mathfrak{m})\mid \mathfrak{m}\in Max(\mathfrak{A})\cap \mathcal{C}\}$, in which $\mathcal{C}$ is a closed subset of $Spec_{h}(\mathfrak{A})$.
\end{theorem}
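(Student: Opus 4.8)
The plan is to prove the two inclusions separately: that every pure filter has the displayed form, and that every filter of the displayed form is pure. The first half is essentially free. Suppose $F$ is pure; Proposition \ref{pureinterd} already gives $F=\bigcap\{D(\mathfrak{m})\mid \mathfrak{m}\in Max(\mathfrak{A})\cap h(F)\}$, and since $h(F)=\bigcap_{x\in F}h(x)$ is an intersection of basic closed sets, it is closed in $Spec_{h}(\mathfrak{A})$. Hence every pure filter is of the required form, witnessed by $\mathcal{C}=h(F)$.

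For the converse I would fix a closed set $\mathcal{C}\subseteq Spec_{h}(\mathfrak{A})$, write $\mathcal{M}=Max(\mathfrak{A})\cap \mathcal{C}$ and $F=\bigcap\{D(\mathfrak{m})\mid \mathfrak{m}\in \mathcal{M}\}$, and aim for $\sigma(F)=F$. The first step is to rewrite $F$ kernel-theoretically: by Proposition \ref{omegprop}\ref{omegprop2}, $D(\mathfrak{m})=k(\mathscr{G}(\mathfrak{m}))$, so $F=k(\mathcal{G})$ where $\mathcal{G}=\bigcup_{\mathfrak{m}\in \mathcal{M}}\mathscr{G}(\mathfrak{m})$. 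The crucial claim is that $\mathcal{G}$ is closed in $Spec_{h}(\mathfrak{A})$, and here I would bring in the Gelfand hypothesis through the retraction $r:Spec_{h}(\mathfrak{A})\longrightarrow Max_{h}(\mathfrak{A})$ of Theorem \ref{gelnor} and \textsc{Remark} \ref{remarkretr}, which sends each prime to the unique maximal filter above it. Since each prime has a unique maximal cover, $\mathfrak{p}\in \mathcal{G}$ if and only if $r(\mathfrak{p})\in \mathcal{M}$, that is, $\mathcal{G}=r^{\leftarrow}(\mathcal{M})$. As $Max_{h}(\mathfrak{A})$ carries the subspace topology of $Spec_{h}(\mathfrak{A})$, the set $\mathcal{M}=\mathcal{C}\cap Max(\mathfrak{A})$ is closed in $Max_{h}(\mathfrak{A})$, and continuity of $r$ then forces $\mathcal{G}$ to be closed.

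With $\mathcal{G}$ closed, purity should drop out. The inclusion $\sigma(F)\subseteq F$ is Proposition \ref{unitpro}\ref{unitpro2}, so it remains to take $a\in F$ and establish $F\veebar a^{\perp}=A$, using the description $\sigma(F)=\{a\mid F\veebar a^{\perp}=A\}$ from Proposition \ref{unitpro}\ref{unitpro1}. If instead $F\veebar a^{\perp}$ were proper, I would enclose it in a maximal filter $\mathfrak{n}$; then $F\subseteq \mathfrak{n}$ gives $\mathfrak{n}\in h(F)=hk(\mathcal{G})=cl_{h}(\mathcal{G})=\mathcal{G}$ by Lemma \ref{retractlemma}\ref{retractlemma1} together with the closedness of $\mathcal{G}$, whence $\mathfrak{n}\in \mathcal{M}$ because $\mathfrak{n}$ is maximal. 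Consequently $F\subseteq D(\mathfrak{n})=\sigma(\mathfrak{n})$ by Proposition \ref{unitpro}\ref{unitpro5}, so $a\in \sigma(\mathfrak{n})$ forces $\mathfrak{n}\veebar a^{\perp}=A$; but $a^{\perp}\subseteq \mathfrak{n}$ and properness of $\mathfrak{n}$ give $\mathfrak{n}\veebar a^{\perp}=\mathfrak{n}\neq A$, a contradiction. Hence $a\in \sigma(F)$ and $F$ is pure.

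The main obstacle is precisely the closedness of $\mathcal{G}$, equivalently the task of pinning down the auxiliary maximal filter $\mathfrak{n}$ inside $\mathcal{M}$; this is the single step that genuinely requires Gelfandness. The cleanest route is the continuous retraction used above, but one could instead argue directly that $\mathscr{G}(\mathcal{M})$ is patch-closed and $\mathscr{S}$-stable (the latter following from uniqueness of the maximal cover) and then invoke Theorem \ref{closefalzai}.
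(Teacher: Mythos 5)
Your proposal is correct, and for the hard direction (a filter of the displayed form is pure) it takes a genuinely different route from the paper. Both proofs dispose of the easy direction identically via Proposition \ref{pureinterd}, and both begin the converse the same way, by enclosing a supposedly proper $F\veebar a^{\perp}$ in a maximal filter $\mathfrak{n}$; but from there the paper never forms the set $\mathcal{G}=\bigcup_{\mathfrak{m}\in\mathcal{M}}\mathscr{G}(\mathfrak{m})$ nor uses the retraction. Instead it observes $\mathfrak{n}\notin\mathcal{C}$, invokes the separation property of Theorem \ref{pmprop} to choose, for every $\mathfrak{m}\in Max(\mathfrak{A})\cap\mathcal{C}$, elements $x_{\mathfrak{m}}\notin\mathfrak{m}$ and $y_{\mathfrak{m}}\notin\mathfrak{n}$ with $x_{\mathfrak{m}}\vee y_{\mathfrak{m}}=1$, covers $\mathcal{C}$ by the sets $d(x_{\mathfrak{m}})$ (using $\mathscr{S}$-stability of the closed set $\mathcal{C}$, as in Theorem \ref{closefalzai}), extracts a finite subcover by compactness of $\mathcal{C}$ (Proposition \ref{hulkerinstr}\ref{hulkerinstr2}), and then manufactures the elements $x=\bigodot_{i=1}^{n}x_{\mathfrak{m}_{i}}$ and $y=\bigvee_{i=1}^{n}y_{\mathfrak{m}_{i}}$, for which $x\in y^{\perp}\setminus\mathfrak{m}$ for all $\mathfrak{m}\in Max(\mathfrak{A})\cap\mathcal{C}$; this puts $y\in G\subseteq\mathfrak{n}$ while primeness of $\mathfrak{n}$ forbids $y\in\mathfrak{n}$. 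Your argument replaces this entire covering-and-finite-subcover construction with the single topological observation that $\mathcal{G}=r^{\leftarrow}(\mathcal{M})$ is closed, where $r$ is the continuous retraction of Theorem \ref{gelnor} and \textsc{Remark} \ref{remarkretr}; combined with $h(F)=hk(\mathcal{G})=cl_{h}(\mathcal{G})=\mathcal{G}$ from Lemma \ref{retractlemma}\ref{retractlemma1}, this pins $\mathfrak{n}$ inside $\mathcal{M}$ and yields the immediate contradiction $a^{\perp}\subseteq\mathfrak{n}$ versus $\mathfrak{n}\veebar a^{\perp}=A$. The trade-off: the paper's proof is element-wise and self-contained modulo Theorem \ref{pmprop}, exhibiting explicit witnesses but at the cost of the compactness argument and the lattice computations with $\bigodot$ and $\bigvee$; yours is shorter and more conceptual, but outsources the real work to the continuity of the retraction, which is itself the nontrivial content of Theorem \ref{gelnor}. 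Since Theorem \ref{gelnor} is established independently of and prior to Theorem \ref{mppurefcl}, there is no circularity, and both proofs use the Gelfand hypothesis through equivalent characterizations.
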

\begin{proof}
let $a\in G:=\bigcap\{D(\mathfrak{m})\mid \mathfrak{m}\in Max(\mathfrak{A})\cap \mathcal{C}\}$, in which $\mathcal{C}$ is a closed subset of $Spec_{h}(\mathfrak{A})$. So for any $\mathfrak{m}\in Max(\mathfrak{A})\cap \mathcal{C}$, we have $\mathfrak{m}\veebar a^{\perp}=A$. By absurdum, assume that $G\veebar a^{\perp}\neq A$. So $G\veebar a^{\perp}$ is contained in a maximal filter $\mathfrak{n}$. Obviously, $\mathfrak{n}\notin \mathcal{C}$. So for any $\mathfrak{m}\in  Max(\mathfrak{A})\cap \mathcal{C}$, there exist $x_{\mathfrak{m}}\notin \mathfrak{m}$ and $y_{\mathfrak{m}}\notin \mathfrak{n}$ such that $x_{\mathfrak{m}}\vee y_{\mathfrak{m}}=1$. Since $\mathcal{C}$ is stable under the specialization, so $\mathcal{C}\subseteq \bigcup_{\mathfrak{m}\in Max(\mathfrak{A})\cap \mathcal{C}} d(x_{\mathfrak{m}})$. By Proposition \ref{hulkerinstr}\ref{hulkerinstr2} follows that $\mathcal{C}$ is compact. So there exist a finite number $\mathfrak{m}_{1},\cdots,\mathfrak{m}_{n}\in Max(\mathfrak{A})\cap \mathcal{C}$ such that $\mathcal{C}\subseteq \bigcup_{i=1}^{n} d(x_{\mathfrak{m}_{i}})$. Set $x=\bigodot_{i=1}^{n} x_{\mathfrak{m}_{i}}$ and $y=\bigvee_{i=1}^{n}y_{\mathfrak{m}_{i}}$. Routinely, one can see that $x\in y^{\perp}\setminus \mathfrak{m}$, for any $\mathfrak{m}\in Max(\mathfrak{A})\cap \mathcal{C}$. This implies that $y\in G$; a contradiction. The converse follows by Proposition \ref{pureinterd}.
\end{proof}

 Let $\mathfrak{A}$ be a residuated lattice. Following \citet[Theorem 4.28]{rasouli2021rickart}, the open $\mathscr{S}$-stable subsets of $Spec_{h}(\mathfrak{A})$ are precisely of the form $d(F)$, in which $F$ runs over the pure filters of $\mathfrak{A}$. So, applying Proposition \ref{sigmfiltlatt}, the set $\tau_{\mathscr{D}}=\{d(F)\mid F\in \sigma(\mathfrak{A})\}$ forms a topology on $Spec(\mathfrak{A})$ which is called the $\mathscr{D}$-topology on $\mathfrak{A}$. It is obvious that $\tau_{h}$ is finer than $\tau_{\mathscr{D}}$. The next theorem gives some criteria for a residuated lattice to be Gelfand, inspired by the one obtained for bounded distributive lattices by \citet[Theorem 6]{al1990topological}.
\begin{theorem}\label{rickhulldmin}
  Let $\mathfrak{A}$ be a residuated lattice. The following assertions are equivalent:
\begin{enumerate}
\item  [(1) \namedlabel{rickhulldmin1}{(1)}] $\mathfrak{A}$ is Gelfand;
\item  [(2) \namedlabel{rickhulldmin2}{(2)}] the hull-kernel and $\mathscr{D}$-topology coincide on $Max(\mathfrak{A})$.
\end{enumerate}
\end{theorem}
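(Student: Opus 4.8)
The plan is to prove both implications through the filter-theoretic characterizations of the Gelfand property already at hand, rather than manipulating the topologies directly. Throughout I would keep in mind that $\tau_{h}$ is finer than $\tau_{\mathscr{D}}$, so on $Max(\mathfrak{A})$ the inclusion $\tau_{\mathscr{D}}|_{Max(\mathfrak{A})}\subseteq \tau_{h}|_{Max(\mathfrak{A})}$ is automatic; the whole content lies in the reverse inclusion and in its failure when $\mathfrak{A}$ is not Gelfand.

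For \ref{rickhulldmin1}$\Rightarrow$\ref{rickhulldmin2} I would show that every basic hull-kernel open set meets $Max(\mathfrak{A})$ in a $\mathscr{D}$-open set. Fix $c\in A$; since $c\in\mathfrak{p}$ iff $\mathscr{F}(c)\subseteq\mathfrak{p}$, we have $d(c)=d(\mathscr{F}(c))$. In a Gelfand residuated lattice Corollary \ref{rhosigmanorg} gives $\sigma(\mathscr{F}(c))=\rho(\mathscr{F}(c))$, which is a pure filter by Proposition \ref{rfilter}\ref{rfilter2}. Theorem \ref{equgelchaunit}\ref{equgelchaunit3} then yields $Max(\mathfrak{A})\cap h(\mathscr{F}(c))=Max(\mathfrak{A})\cap h(\sigma(\mathscr{F}(c)))$, and complementing inside $Max(\mathfrak{A})$ gives $d(c)\cap Max(\mathfrak{A})=d(\sigma(\mathscr{F}(c)))\cap Max(\mathfrak{A})$, which is $\mathscr{D}$-open. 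Hence $\tau_{h}|_{Max(\mathfrak{A})}\subseteq \tau_{\mathscr{D}}|_{Max(\mathfrak{A})}$, and the two topologies coincide.

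For \ref{rickhulldmin2}$\Rightarrow$\ref{rickhulldmin1} I would verify condition \ref{pmprop2} of Theorem \ref{pmprop}. Let $\mathfrak{m}\neq\mathfrak{n}$ be maximal filters; choosing $c\in\mathfrak{m}\setminus\mathfrak{n}$ we get $\mathfrak{n}\in d(c)\cap Max(\mathfrak{A})$ while $\mathfrak{m}\notin d(c)\cap Max(\mathfrak{A})$. By the assumed coincidence this set is $\mathscr{D}$-open, and since $\sigma(\mathfrak{A})$ is a frame (Proposition \ref{sigmfiltlatt}) every $\mathscr{D}$-open set has the form $d(F)$ for a single pure filter $F$; thus $d(c)\cap Max(\mathfrak{A})=d(F)\cap Max(\mathfrak{A})$ with $F$ pure. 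Then $F\nsubseteq\mathfrak{n}$ and $F\subseteq\mathfrak{m}$. Pick $a\in F\setminus\mathfrak{n}$. As $\mathfrak{m}\in Max(\mathfrak{A})\cap h(F)$, Proposition \ref{pureinterd} gives $F\subseteq D(\mathfrak{m})$, so $a\in D(\mathfrak{m})=\omega(\dot{\mathfrak{m}})$; hence there is $x\notin\mathfrak{m}$ with $a\vee x=1$. Then $x\notin\mathfrak{m}$, $a\notin\mathfrak{n}$ and $x\vee a=1$, which is precisely \ref{pmprop2}, so $\mathfrak{A}$ is Gelfand.

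The topological bookkeeping (the identity $d(c)=d(\mathscr{F}(c))$ and complementation inside $Max(\mathfrak{A})$) is routine. The step I expect to require the most care is the passage, in the reverse direction, from a separating \emph{pure} filter to the element-wise separation of \ref{pmprop2}: one must notice that purity is exactly what allows Proposition \ref{pureinterd} to push the chosen $a\in F\setminus\mathfrak{n}$ into $D(\mathfrak{m})$, after which unwinding the definition of the $D$-part produces the complementary pair witnessing the Gelfand condition.
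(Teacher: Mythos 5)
Your proof is correct, and it reaches the theorem by a genuinely different route than the paper's in both directions. For \ref{rickhulldmin1}$\Rightarrow$\ref{rickhulldmin2} the paper does not touch $\sigma$ or $\rho$ at all: it takes the retraction $f\colon Spec_{h}(\mathfrak{A})\longrightarrow Max_{h}(\mathfrak{A})$ of Theorem \ref{gelnor} and writes $d(a)\cap Max(\mathfrak{A})=f^{\leftarrow}(d(a)\cap Max(\mathfrak{A}))\cap Max(\mathfrak{A})$, where the preimage is open and $\mathscr{S}$-stable, hence equal to $d(F)$ for some pure $F$ by the characterization of open $\mathscr{S}$-stable sets quoted from \cite{rasouli2021rickart}; you instead exhibit the pure filter concretely as $\sigma(\mathscr{F}(c))=\rho(\mathscr{F}(c))$ (Corollary \ref{rhosigmanorg}) and move hull-sets across $\sigma$ with Theorem \ref{equgelchaunit}\ref{equgelchaunit3}. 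For \ref{rickhulldmin2}$\Rightarrow$\ref{rickhulldmin1} the paper argues by contradiction from a prime lying under two distinct maximal filters, and its crux is again that $d(F)$ is $\mathscr{S}$-stable when $F$ is pure (so that $\mathfrak{p}\in d(F)$ and $\mathfrak{p}\subseteq\mathfrak{n}$ force $\mathfrak{n}\in d(F)$); you instead verify the separation criterion \ref{pmprop2} of Theorem \ref{pmprop} directly for two distinct maximal filters, pushing the witness $a\in F\setminus\mathfrak{n}$ into $D(\mathfrak{m})$ by Proposition \ref{pureinterd} and unwinding $\omega(\dot{\mathfrak{m}})$ to get the pair $x\notin\mathfrak{m}$, $a\notin\mathfrak{n}$ with $a\vee x=1$. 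The trade-off: your argument stays entirely inside results proved in this paper (Theorems \ref{pmprop} and \ref{equgelchaunit}, Corollary \ref{rhosigmanorg}, Proposition \ref{pureinterd}), needs no common prime below the two maximal filters, and never invokes $\mathscr{S}$-stability of $d(F)$; the paper's argument is shorter and exploits the retraction picture of Gelfand residuated lattices, at the cost of outsourcing the key topological fact to the cited Rickart paper.
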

\begin{proof}
\item [\ref{rickhulldmin1}$\Rightarrow$\ref{rickhulldmin2}:] Let $f$ be a retraction from $Spec(\mathfrak{A})$ into $Max(\mathfrak{A})$. Consider $a\in A$. Routinely, one can show that $d(a)\cap Max(\mathfrak{A})=f^{\leftarrow}(d(a)\cap Max(\mathfrak{A}))\cap Max(\mathfrak{A})$, in which $f^{\leftarrow}(d(a)\cap Max(\mathfrak{A}))$ is open $\mathscr{S}$-stable. This yields that $\tau_{\mathscr{D}}$ is finer than $\tau_{h}$ on $Max(\mathfrak{A})$.
\item [\ref{rickhulldmin2}$\Rightarrow$\ref{rickhulldmin1}:] Let $\mathfrak{p}$ be a prime filter of $\mathfrak{A}$ that is contained in two distinct maximal filters $\mathfrak{m}$ and $\mathfrak{n}$. So there exists $a\in A$ such that $\mathfrak{m}\in d(a)$ and $\mathfrak{n}\notin d(a)$. Thus $d(a)\cap Max(\mathfrak{A})=d(F)\cap Max(\mathfrak{A})$, for some pure filter $F$ of $\mathfrak{A}$. Since $\mathfrak{m}\in d(F)$, so $\mathfrak{p}\in d(F)$. This implies that $\mathfrak{n}\in d(F)$; a contradiction.
\end{proof}
\section*{\textsf{Declarations}}

\noindent\textbf{\textsf{Conflict of interest}} The authors declare that they have no conflict of
interest.

\noindent\textbf{\textsf{Ethical approval}} This article does not contain any studies with human
participants or animals performed by any of the authors.

\noindent\textbf{\textsf{Informed consent}} Informed consent was obtained from all individual
participants included in the study.

\end{document}